\documentclass[12pt,reqno]{amsart}
\advance \topmargin by -\headheight
\advance \topmargin by -\headsep
\evensidemargin \oddsidemargin
\usepackage{amsmath}
\usepackage{amsfonts}
\usepackage{stmaryrd}
\usepackage{amssymb}
\usepackage{amsthm}
\usepackage{csquotes}
\usepackage{color}
\usepackage{mathrsfs}
\usepackage{dsfont}
\usepackage{bm}
\usepackage{cite}
\usepackage{soul}
\usepackage{hyperref}

\numberwithin{equation}{section}
\renewcommand\vec{\bm}
\newcommand{\n}[1]{\|{#1}\|}

\newtheorem{theorem}{Theorem}[section]
\newtheorem{lemma}[theorem]{Lemma}
\newtheorem{Proposition}[theorem]{Proposition}

\newtheorem{Corollary}[theorem]{Corollary}
\newtheorem{Question}[theorem]{Question}

\theoremstyle{remark}
\newtheorem{remark}[theorem]{Remark}

\theoremstyle{definition}

\usepackage{mathtools}

\title{A Structure theorem for sets with doubling $4+\delta$}

\author{Yifan Jing}
\address{Department of Mathematics, The Ohio State University, Columbus OH, 43210 USA. }
\email{jing.245@osu.edu}

\author{Akshat Mudgal}
\address{Mathematics Institute, Zeeman Building, University of Warwick, Coventry CV4 7AL, United Kingdom}
\email{Akshat.Mudgal@warwick.ac.uk}

\subjclass[2020]{11B13, 11B30} 
\keywords{Freiman's theorem, Arithmetic regularity lemma, Inverse Kneser theorem, Brunn--Minkowski inequality, Generalised arithmetic progressions.}

\renewcommand\vec{\bm}

\begin{document}

\def\ZZ{\mathbb{Z}}
\def\RR{\mathbb{R}}
\def\NN{\mathbb{N}}
\def\QQ{\mathbb{Q}}
\def\FF{\mathbb{F}}
\def\CC{\mathbb{C}}
\def\TT{\mathbb{T}}
\def\supp{\rm supp}

\def\C{\mathcal{C}}
\def\P{\mathcal{P}}
\def\L{\mathcal{L}}
\def\HH{\mathcal{H}}
\def\A{\mathcal{A}}

\def\eps{\varepsilon}

\def\M{\mathrm{Mat}}

\def\xxi{\vec{\xi}}
\def\eeta{\vec{\eta}}
\def\ggamma{\vec{\gamma}}
\def\Supp{\mathrm{Supp}}

\def\TT{\mathbb{T}}
\def\RR{\mathbb R}
\def\d{\,\mathrm d}
\def\ZZ{\mathbb{Z}}
\def\cS{\mathcal{S}}
\def\cY{\mathcal{Y}}
\def\cB{\mathcal{B}}

\newcommand{\Fr}[1]{\widehat{#1}}
\newcommand\1{\mathds{1}}

\def\cF{\mathcal{F}}

\begin{abstract}
    We prove a structural result for sets of integers with doubling at most $4 + \delta$, with $\delta>0$ sufficiently small. This generalises earlier work of Eberhard--Green--Manners which dealt with sets of integers with doubling strictly less than $4$, and makes progress towards a question of Green.
        \end{abstract}

\maketitle

\section{Introduction}

This paper concerns a structural theorem for sets of integers with small doubling. Here, given a finite, non-empty set $A$ of integers, we define the sumset $A+A$ of $A$ as
\[ A+A = \{a + a' : a,a' \in A\} \]
and the doubling $\sigma[A]$ of $A$ as $\sigma[A] = |A+A|/|A|$. A central theme in additive combinatorics concerns a classification of sets $A$ with $\sigma[A] \leq K$ for some parameter $K \geq 1$. A key example here is a higher dimensional progression. In particular, given $d \in \mathbb{N}$, we define a \emph{$d$--dimensional progression} $P$ to be a set of the form 
\begin{equation} \label{dimprog}
P = \{ a_0 + l_1 a_1 + \dots + l_d a_d \ : \  1 \leq l_i \leq L_i \ \ (1 \leq i \leq d)\}, 
\end{equation}
where $a_0, a_1, \dots, a_d \in \mathbb{Z}$ and $L_1, \dots, L_d \in \mathbb{N}$. Moreover, we say that $P$ is \emph{proper} if $|P| = L_1 L_2 \dots L_d$. One can see that proper $d$-dimensional progressions $P$ satisfy $\sigma[P] \leq 2^d$. It turns out that these are essentially the only examples; Freiman's theorem states that any set $A \subseteq \mathbb{Z}$ with $\sigma[A] \leq K$ must have a large subset contained in a proper $d$-dimensional progression $P$ such that $d \ll_K 1$ and $|P| \ll_K |A|$. Obtaining better quantitative dependence on $K$ in this result and variations thereof forms a key area of research in additive combinatorics, in part due to its many applications in combinatorics and number theory, see \cite{Go1998, GGMT2025, Ra2025, Sa2013, Mu2024} for further details and references.

In his list of problems, Green \cite[Problem 30]{Gr} inquired about a specific variation of this question. We record this question below.

\begin{Question} \label{grprob}
    Let $K \geq 2$, let $A \subset \mathbb{Z}$ be a finite set, let $\varepsilon >0$. If $|A-A| \leq K|A|$, then find a structured  subset $A' \subseteq A$ which `obviously' satisfies $|A'-A'| \leq (K + \eps)|A'|$.
\end{Question}

In order to give some indication of what kind of result to expect, we consider Freiman's $3k-3$ theorem, see \cite{LS1995, TV2006}. This states that any finite set $A \subseteq \mathbb{Z}$ which satisfies $|A-A| < 3|A| - 3$ must be contained in an arithmetic progression of size $|A-A| - |A| +1$. Let $\sigma_{-}[A] = |A-A|/|A|$. Thus, given $\delta \in (0,1)$ and $|A|$ sufficiently large in terms of $\delta,$ one see that whenever $\sigma_{-}(A) < 2 + \delta$, then there exists a $1$-dimensional progression $P$ with $|P| \geq |A|$ such that $A$ has density $1/(1 + \delta)$ on $P$. Moreover, any set $A' \subseteq P$ with $|A'| \geq |P|/(1 + \delta)$ trivially satisfies
\[ |A'-A'| \leq |P-P| < 2 |P| \leq (2 + 2\delta)|A'|.\]
Thus the upper bound on $\sigma_{-}[A']$ almost matches that of $\sigma_{-}[A]$.  On the other hand, Freiman's $3k-3$ theorem only holds for sets with doubling $< 3$. There is an old conjecture of Freiman \cite{Fr1999} concerning a generalisation of his $3k-3$ theorem for sets with doubling $< 10/3$, see also work of Jin \cite{Ji2007} who employed non-standard analysis to confirm this conjecture for large enough sets with doubling $< 3 + \varepsilon$, for some very small $\eps>0$.

The only known result of this type which concerns sets with doubling much larger than $3$ is a nice structural result due to Eberhard--Green--Manners\cite[Theorem 6.4]{EGM2014}. This states that for any $\varepsilon >0$ and any finite set $A \subseteq \mathbb{Z}$ satisfying $\sigma_{-}[A] \leq 4 - \varepsilon$, there exists a $1$-dimensional progression $P \subseteq \mathbb{Z}$ such that $|P| \gg_{\varepsilon} |A|$ and
\begin{equation} \label{opt4}
    |A \cap P|/|P| \geq 1/2 + c\varepsilon,
\end{equation}
for some absolute constant $c>0$. This structural result formed an important ingredient in the proof of their well-known result on the Erd\H{o}s sum-free set conjecture. Another nice aspect of this result is that given some small $\varepsilon>0$, any subset $A'$ of a $1$--dimensional progression $P$ with $|A'| \geq |P|(1/2 + c\varepsilon)$ obviously satisfies 
\[ |A'-A'| \leq |P-P| < 2|P| \leq (4 - c\varepsilon) |A'|,\]
and so, the conclusion recorded in \eqref{opt4} is optimal up to the constant $c>0$.

Thus, Eberhard--Green--Manners provide an affirmative answer to Question \ref{grprob} when $K < 4$. On the other hand, as soon as $K \geq 4$, one obtains at least two significantly different constructions that have doubling $K$. For instance, choosing $K = 4 + \delta$ for some  small $\delta \geq 0$, one can either let $A'$ be a subset of some $1$--dimensional progression $P$ with $|A'| \geq (1/2 - \delta)|P|$, which would give us 
\begin{equation} \label{1dimexample}
    |A'-A'| \leq |P-P| < 2|P| \leq (4 + \delta) |A'|,
\end{equation}
or alternatively, one could let $A'$ be a subset of a proper $2$--dimensional progression $Q$ with $|A'| \geq (1 - \delta/2)|Q|$, whence, 
\begin{equation} \label{2dimexample}
 |A'-A'| \leq |Q'-Q'| < 4 |Q'| \leq (4 +  \delta)|A'|. 
 \end{equation}

It is natural to ask whether these are the only two possible examples with doubling at most $4 + \delta$, for sufficiently small $\delta$. Our main result confirms this in the affirmative.

\begin{theorem} \label{main}
    Let $\varepsilon', \delta>0$ be real numbers with $\delta$ sufficiently small in terms of $\varepsilon'$. Let $A \subset \mathbb{Z}$ be a finite, non-empty set such that $|A+A| \leq (4 + \delta) |A|$. Then one of the following must hold. 
    \begin{enumerate}
        \item There exists a $1$--dimensional progression $P \subseteq \mathbb{Z}$ such that $|P| \gg_{\delta} |A|$ such that 
        \[ \frac{|A \cap P|}{|P|} \geq 1/2 - \varepsilon'.\]
        \item There exists a proper $2$--dimensional arithmetic progression $Q \subseteq \mathbb{Z}$ with $|Q| \gg_{\delta} |A|$ such that
        \[ \frac{|A \cap Q|}{|Q|} \geq 1 - \varepsilon'. \]
    \end{enumerate}
Moreover the same conclusion holds with $A+A$ replaced by $A-A$.
\end{theorem}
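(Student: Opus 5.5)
The plan is to follow the Eberhard--Green--Manners strategy: pass from $A$ to a model in a compact abelian group, and then use inverse results there. The keywords point to an arithmetic regularity lemma, an inverse Kneser theorem and Brunn--Minkowski.

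\emph{Step 1: localise to a torus.} Since $|A+A| \le (4+\delta)|A|$, Freiman's theorem gives a proper progression $P$ of bounded dimension $d \ll 1$ with $|P| \ll |A|$ containing a large piece of $A$. Freiman isomorphism then lets us regard $A$ as a dense subset of a box in $\mathbb{Z}^d$, or of $\mathbb{Z}/N\mathbb{Z}$ with $N$ prime, after a Ruzsa-type embedding. Next we apply Green's arithmetic regularity lemma to $1_A$ with tolerance $\eta = \eta(\varepsilon')$. This produces a Bohr set $B$ of rank $r \ll_{\eta} 1$ and a decomposition $1_A = f_{\mathrm{str}} + f_{\mathrm{sml}} + f_{\mathrm{unf}}$. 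Here $f_{\mathrm{str}} = F\circ\phi$ for a Lipschitz function $F:\mathbb{T}^r\to[0,1]$ and a homomorphism $\phi$ to $\mathbb{T}^r$.

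\emph{Step 2: pass to a continuous inequality.} A counting/removal argument converts the combinatorial bound into a lower bound on sumsets of level sets. We let $S=\{x\in \mathbb{T}^r : F(x)\ge \kappa\}$ for small $\kappa$, so that $\mu(S) \ge \mu(A)(1-o(1))$. Popular sums then show
\[ \mu(S+S) \le (4+2\delta)\,\mu(S) \]
in the compact group $\mathbb{T}^r$, possibly after restricting to a sub-box where $\phi(A)$ is distributed. We have thus reduced to a question in $\mathbb{T}^r \times$ (a discrete part).

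\emph{Step 3: the continuous inverse theorem.} Here we classify sets $S$ in $\mathbb{T}^r\times\mathbb{Z}^{d'}$ (a locally compact model) with $\mu(S+S)\le(4+\delta)\mu(S)$, $\delta$ small. There are two routes. The first is the inverse Kneser theorem for compact connected groups (Jing--Tran--Zhang type stability of Kemperman). If $\mu(S) < 1/4$, say, it produces a continuous surjection $\chi:\mathbb{T}^r\to\mathbb{T}$ with $S$ essentially contained in $\chi^{-1}(I)$ for an interval $I$ with $\mu(I)\le (1/2+\varepsilon')^{-1}\mu(S)$. The second is Brunn--Minkowski stability in the real directions, via Christ/Figalli--Jerison. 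This says that doubling near $4 = 2^2$ forces $S$ to be essentially a $2$-dimensional convex body (near-extremal in dimension $2$). Alternatively, doubling near $2$ per dimension with density at least $1/2-\varepsilon'$ on a $1$-dimensional interval forces an interval-like structure. The dichotomy comes from comparing $4$ with $2^k$: one direction with density at least $1/2$ gives $2\cdot 2$, while two genuine directions force near-equality in Brunn--Minkowski. Three or more directions would give doubling at least $8(1-o(1))$, which is excluded.

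\emph{Step 4: pull back.} A Bohr set intersected with $\chi^{-1}(I)$ corresponds in $\mathbb{Z}$ to a (generalised) progression. In the $1$-dimensional case, $\chi\circ\phi$ is a character $x\mapsto \theta x$, and the preimage of $I$ inside a long progression contains a $1$-dimensional progression $P$ of size $\gg_\delta |A|$. The $f_{\mathrm{unf}}$ and $f_{\mathrm{sml}}$ errors average out on it, so we get density $\ge 1/2-\varepsilon'$. In the $2$-dimensional case we obtain a $2$-dimensional Bohr structure, which we convert into a proper $2$-dimensional progression $Q$ on which $A$ has density $\ge 1-\varepsilon'$. The $A-A$ version follows from the same argument, since the continuous inverse theorems hold for $S-S$ too.

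\emph{Main obstacle.} I expect the hardest step to be Step 3: the stability statement for doubling $4+\delta$ in mixed torus/Euclidean groups. Doubling $4$ is exactly where the $1$-dimensional and $2$-dimensional extremisers tie, so the inverse Kneser theorem alone does not suffice. One needs a quantitative splitting argument: project onto a $1$-dimensional quotient, and use fibrewise Kneser/Brunn--Minkowski (Kemperman-type fibre estimates) to show that either fibres are long intervals with density $\ge1/2$, or the projection itself has doubling near $2$ and fibres are near-full. I would try a fibre-counting inequality of the form $\mu(S+S)\ge \sum$ over fibres of $(\mu(S_x)+\mu(S_y))$ along a popular-sum path, and analyse the equality cases.
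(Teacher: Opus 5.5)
Your Step 3 carries the whole proof but is left as heuristics, and those heuristics are false.

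\textbf{The torus model and the claimed inverse statements.} Stated in the compact group $\mathbb{T}^r$, your Step 2 inequality has already lost the factor $2$ that comes from the interval direction of $[N]$. In $\mathbb{T}^r$, Kneser only gives $\mu(S+S)\ge\min\{1,2\mu(S)\}$, and $\mu(S+S)\le(4+2\delta)\mu(S)$ is vacuous once $\mu(S)>1/4$.

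The inverse statement you attribute to inverse Kneser theory does not exist. The theorems of Tao (Lemma \ref{tao}) and Jing--Tran--Zhang concern near-equality in Kneser, i.e.\ doubling about $2$, and they give $\mu(I)\approx\mu(S)$. Concretely, take $S=[0,1/3]^2\subseteq\mathbb{T}^2$. Then $\mu(S)=1/9<1/4$ and $\mu(S+S)=4\mu(S)$. For every surjection $\chi(x,y)=ax+by$, the push-forward of $\mu|_S$ has density at most $3\mu(S)$. So capturing most of $S$ needs $\mu(I)\ge(1-\varepsilon')/3$, which exceeds your allowed $\tfrac{2}{9}(1+O(\varepsilon'))$.

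Likewise, ``three or more directions force doubling $\ge 8$'' is a Euclidean Brunn--Minkowski fact, and it fails for discrete or torus directions. The paper's example $\{0,10N,2^N\}+[N]$ is Freiman isomorphic to $[N]\times\{0,e_1,e_2\}\subseteq\mathbb{Z}^3$: it is $3$-dimensional, yet its doubling is below $4$.

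\textbf{The missing mechanism.} Your closing paragraph only gestures at what makes doubling $4$ tractable. The paper keeps the model $\mathbb{Z}/q\mathbb{Z}\times[0,1]\times\mathbb{T}^d$ fibred over the cells $I_{a,i}$. One factor $2$ comes from the Eberhard--Green--Manners planar Brunn--Minkowski bound (Lemma \ref{egmbm}) applied to the fibre densities $\alpha(a,i)$. The other comes from Kneser (Lemma \ref{kneser}) in each $\mathbb{T}^d$-fibre. Inverse Kneser is therefore only ever needed at doubling $2$, on pairs of fibres. Taking differences with the densest cell $(a_0,i_0)$ then yields, outside the structured cases, a gain of $\delta$ on $\gg\alpha qM$ difference cells. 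That is what beats the accumulated $O(\eta+\varepsilon)$ losses.

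\textbf{Step 4 misplaces the two alternatives.} The roles are the reverse of yours. In the paper, alternative (1) never comes from a Bohr set: it is simply a cell with $\alpha(a,i)\ge1/2-C\varepsilon'$, and $I_{a,i}$ is itself a $1$-dimensional progression. Inverse Kneser is what produces alternative (2), and Brunn--Minkowski is used only as a lower bound. The rank-one Bohr set $B=\{n\in I_{a,i}:n\theta\in I\}$, with $\theta$ $(\mathcal{F}(M),N)$-irrational, is the source of the $2$-dimensional progression, via geometry of numbers (Lemmas \ref{geomnumber} and \ref{bohrprog}), not Figalli--Jerison. Its $1$-dimensional subprogressions have length $\lesssim N/\mathcal{F}(M)$. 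That is far below the scale at which the regularity decomposition controls $\1_A$, so ``density $1/2$ on $\chi^{-1}(I)$'' does not give alternative (1).

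\textbf{Two missing ingredients for density $1-\varepsilon'$ on $B$.}
\begin{enumerate}
\item $F_{a,i}$ must be essentially $\{0,1\}$-valued on the structured fibre. The paper extracts this from stability of the weighted Kneser bound \eqref{hpl2}, namely the condition $\mu(K_{a,i}\setminus T_{a,i})<\delta^{1/2}$. A single level set $\{F\ge\kappa\}$ discards this information.
\item You need a bound $\|\vec a\|_2\ll_{M,\eta,\delta}1$ for $\varphi(\vec x)=\vec a\cdot\vec x$, and Tao's theorem gives no control on $\vec a$. Without it, $\1_{\varphi^{-1}(I)}$ cannot be sandwiched between Lipschitz functions of bounded constant, and $\theta=\vec a\cdot\vec\theta$ need not be irrational. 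So ``the errors average out'' is unjustified.
\end{enumerate}

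\textbf{Pulling back through the Freiman isomorphism.} A progression found in the model can only be transported back to $\mathbb{Z}$ if it lies in a bounded sumset of the model set, where the isomorphism is defined. The paper proves $P'\subseteq 9A'-8A'$, using Lev's lemma on the minimal progression because Lemma \ref{egmcover} needs density $>1/2$. It also proves $Q'\subseteq 41A'-40A'$. Your Step 1 has no such step. It should also model all of $A$ rather than ``a large piece'', since passing to a large piece loses the sharp constant $4$.
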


This provides an affirmative answer to Question \ref{grprob} whenever $K = 4 + \delta$, for sufficiently small $\delta>0$. Furthermore, noting \eqref{1dimexample} and \eqref{2dimexample}, we see that our result is optimal up to to the quantitative dependence between $\delta$ and $\varepsilon'$. 

While we have stated Theorem \ref{main} for finite sets of integers, this holds equally well for finite sets of real numbers. Indeed, every finite subset of reals is Freiman $20$-isomorphic to some finite set of integers, and since doubling and the additive structure of $d$-dimensional progressions remain invariant under such maps, this immediately gives Theorem \ref{main}  for finite sets of real numbers.

We briefly mention some applications. Thus, given some finite set $A \subset \mathbb{Z}$ and some $\Gamma \subseteq A \times A$, we define the restricted sumset 
\[ A+_{\Gamma} A = \{a_1 + a_2 : (a_1, a_2) \in \Gamma \}.\]
With this definition in hand, we state an ``almost-all" version of Theorem \ref{main}.

\begin{Corollary} \label{almostall}
  Let $1>\varepsilon' > \delta > \delta'$ be a decreasing sequence of positive real numbers with each term being sufficiently small in terms of the previous one, and let $A \subset \mathbb{Z}$ be a finite, non-empty set. Suppose there exists some $\Gamma \subseteq A \times A$ such that 
  \[ |\Gamma| \geq (1 -\delta')|A|^2 \ \ \text{and} \ \ |A +_{\Gamma} A| \leq (4 + \delta)|A|. \]
  Then one of the following must hold. 
    \begin{enumerate}
        \item There exists a $1$--dimensional progression $P \subseteq \mathbb{Z}$ such that $|P| \gg_{\delta} |A|$ such that 
        \[ \frac{|A \cap P|}{|P|} \geq 1/2 - \varepsilon'.\]
        \item There exists a proper $2$--dimensional arithmetic progression $Q \subseteq \mathbb{Z}$ with $|Q| \gg_{\delta} |A|$ such that
        \[ \frac{|A \cap Q|}{|Q|} \geq 1 - \varepsilon'. \]
    \end{enumerate}
\end{Corollary}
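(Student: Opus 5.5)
We reduce Corollary \ref{almostall} to Theorem \ref{main} by passing to a large subset $A'\subseteq A$ whose full sumset is controlled. The tool is the Balog--Szemer\'edi--Gowers argument in its high-density "almost-all" form. In the regime $|\Gamma|\ge(1-\delta')|A|^2$ it loses only a factor close to $1$ in the doubling.

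\textbf{Step 1: choosing the popular elements.} Set $S = A+_\Gamma A$, so $|S|\le(4+\delta)|A|$. Call $a\in A$ \emph{good} if $|\{a' : (a,a')\in\Gamma\}|\ge(1-\sqrt{\delta'})|A|$. By Markov's inequality, all but at most $\sqrt{\delta'}|A|$ elements of $A$ are good. Let $A_1$ be the set of good elements, so $|A_1|\ge(1-\sqrt{\delta'})|A|$.

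\textbf{Step 2: controlling $A_1+A_1$ up to a small exceptional set.} Take $a,b\in A_1$. The number of $c\in A$ with both $(a,c)\in\Gamma$ and $(b,c)\in\Gamma$ is at least $(1-2\sqrt{\delta'})|A|$. Hence $a-b$ has at least that many representations as $s-s'$ with $s,s'\in S$. This controls differences well, but our target is the sumset $A_1+A_1$, so we use the sum version of the same idea. For $x=a+b$ with $a,b\in A_1$, consider the identity
\[ a+b = (a+c)-(c+d)+(d+b). \]
We count pairs $(c,d)\in A^2$ with $(a,c),(c,d),(d,b)\in\Gamma$. This count is at least $(1-O(\sqrt{\delta'}))|A|^2$ once we also require $c,d$ to be good. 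So every element of $A_1+A_1$ has $\gg|A|^2$ representations as $s_1-s_2+s_3$ with $s_i\in S$, which only yields $|A_1+A_1|\ll|A|$. That bound is far too lossy.

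The step I expect to be the main obstacle is getting the precise constant $4+O(\delta)$ rather than a crude bound. I would handle it differently: show that almost every sum in $A_1+A_1$ already lies in $S$, then delete a few elements. Let $T=(A_1+A_1)\setminus S$.

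Each $x\in T$ has $r_{A_1+A_1}(x)$ representations, all of them non-$\Gamma$ pairs. The total number of non-$\Gamma$ pairs is at most $\delta'|A|^2$. So the elements of $T$ with $r(x)\ge\eta|A|$ number at most $\delta'|A|/\eta$.

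The low-multiplicity elements need a separate argument, since they cannot be deleted in this way. For them I would apply Theorem \ref{main}-type robustness: replace the hypothesis "$|A+A|\le(4+\delta)|A|$" by the popular-sums condition
\[ |\{x: r_{A_1+A_1}(x)\ge\eta|A|\}|\le (4+\delta+\delta'/\eta)|A|. \]
Then feed this into the argument via a continuous/arithmetic-regularity reformulation. The proof of Theorem \ref{main} presumably goes through an arithmetic regularity lemma and Brunn--Minkowski / inverse Kneser stability, and those arguments naturally see only popular sums.

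If we instead want to invoke Theorem \ref{main} as a black box, I would first try the following. Pass to a subset $A_2\subseteq A_1$ with $|A_2|\ge(1-\varepsilon'')|A|$ using a dependent-random-choice argument, chosen so that every sum in $A_2+A_2$ has multiplicity $\ge\eta|A|$ in $A+_\Gamma A$ counted via $\Gamma$. Then $A_2+A_2\subseteq S\cup T_{\mathrm{big}}$, giving
\[ |A_2+A_2|\le(4+\delta+\delta'/\eta)|A|\le(4+2\delta)|A_2|, \]
provided $\delta'$ is small relative to $\eta$ and $\delta$, which the hierarchy $\varepsilon'>\delta>\delta'$ allows.

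\textbf{Step 3: applying Theorem \ref{main}.} Apply Theorem \ref{main} to $A_2$ with parameters $2\delta$ and $\varepsilon'/2$. This gives either a progression $P$ with $|A_2\cap P|\ge(1/2-\varepsilon'/2)|P|$, or a proper $2$-dimensional $Q$ with $|A_2\cap Q|\ge(1-\varepsilon'/2)|Q|$. In either case $|P|,|Q|\gg_\delta|A|$. Since $A_2\subseteq A$, this transfers immediately to $A$ with $\varepsilon'$.
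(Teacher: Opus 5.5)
Your reduction has the same shape as the paper's. Find $A'\subseteq A$ with $|A'|\ge(1-o(1))|A|$ and $|A'+A'|\le|A+_\Gamma A|+o(|A|)$, then apply Theorem \ref{main} to $A'$ with slightly worse parameters; your Step 3 is fine. But the existence of such an $A'$ is the whole difficulty, and the paper imports it from Shao's almost-all Balog--Szemer\'edi--Gowers theorem. Your dependent-random-choice step does not supply it.

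Read literally, your claim is that every sum in $A_2+A_2$ has $\ge\eta|A|$ representations from $\Gamma$, so $A_2+A_2\subseteq S$. This is false. Take $A=[N]$ with $N\ge 20$, any $Y\subseteq[N/2,3N/2]$ with $|Y|=\delta'N$, and $\Gamma=\{(a,b):a+b\notin Y\}$. Then $|\Gamma|\ge(1-\delta')N^2$ and $|A+_\Gamma A|<2N$. Yet every $A_2\subseteq[N]$ with $|A_2|\ge 4N/5$ satisfies $Y\subseteq A_2+A_2$, and elements of $Y$ have no $\Gamma$-representations at all.

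In the intended form, $A_2+A_2\subseteq S\cup T_{\mathrm{big}}$ up to $o(|A|)$ exceptions. To get this you must show that the pairs in $A_1^2$ whose sum is a \emph{rare} element outside $S$ are concentrated on $o(|A|)$ vertices. There can be $\asymp\delta'|A|^2$ such pairs with $\asymp\delta'|A|^2$ distinct sums, e.g.\ $A=[N]\cup R$ with $R$ a sparse, widely spread set. Dependent random choice only yields common-neighbourhood information. That controls $a-b$, or $a+b$ through identities like $a+b=(a+c)-(c+d)+(d+b)$, which is exactly the lossy bound you rejected. Whether the particular sum $a+b$ is popular is not a common-neighbourhood property of the pair $(a,b)$. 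The needed concentration is a removal-lemma-type statement and is precisely the content of Shao's theorem; citing it closes the gap.

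Your route (a) is not a black-box use of Theorem \ref{main}, and as written it is only a guess about the proof. It can in fact be made to work. The paper's expansion case (see \eqref{caseone}) only counts popular differences: those $d$ with $\sum_n\1_A(n)\1_A(n+d)\ge c\,N/(qM)$, where $c=c(\varepsilon,\delta)>0$, and likewise for sums. The two structured cases never use the doubling hypothesis. Three things then have to be handled:
\begin{enumerate}
\item The popularity threshold depends on $M\ll_{\varepsilon,\cF}1$ rather than on a fixed $\eta$. So $\delta'$ must be chosen small in terms of the bound on $M$, which the hierarchy allows.
\item The Green--Ruzsa modelling needs $|A_1+A_1|\ll|A|$. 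This is exactly what your ``too lossy'' three-term identity gives.
\item Freiman $2$-isomorphisms preserve the counts $r_{A_1+A_1}$, so popularity transfers to the model.
\end{enumerate}
None of this is carried out in your proposal.
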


In order to give some context about this, note that the Balog--Szemer\'{e}di--Gowers theorem \cite[Theorem 4.1]{SSV2005} states that given parameters $C_1, C_2>0$, whenever $|\Gamma| \geq |A|^2/C_1$ and $|A+_{\Gamma} A| \leq C_2|A|$, one can find sets $A', B' \subseteq A$ such that 
\[ |A'|, |B'| \gg |A|/C_1^2 \ \ \text{and} \ \ |A'+B'| \ll C_1^5 C_2^3 |A|. \] 
In particular, the Balog--Szemer\'{e}di--Gowers theorem \cite{Go1998} finds large structured subsets of sets $A \subset \mathbb{Z}$ with large additive energy $E(A)$, where $E(A)$ counts the number of quadruples $(a_1, \dots, a_4) \in A^4$ such that $a_1 + a_2 = a_3 + a_4$, see \cite{RS2024} for the best quantitative results towards this. In \cite{Sh2019} Shao proved an almost-all version of the Balog--Szemer\'{e}di--Gowers theorem, where upon allowing $\Gamma$ to satisfy $|\Gamma| \geq (1 - o(1))|A|^2$, one can find $A', B' \subseteq A$ such that 
\[ |A'|, |B'| \geq (1 - o(1))|A| \ \text{and} \ |A'+A'| \leq |A +_{\Gamma} A| + o(|A|). \]
In fact, Corollary \ref{almostall} follows from combining Theorem \ref{main} and a straightforward consequence of \cite[Theorem 1.1]{Sh2019}, see \cite[Lemma 2.6]{JM2023}.

It is perhaps worth mentioning that our proof also naturally yields the following Bogolyubov--Ruzsa type lemma.

\begin{Corollary} \label{bogol1}
    Let $\delta>0$ be sufficiently small. Let $A \subseteq \mathbb{Z}$ be a finite, non-empty set such that $|A+A| \leq (4 + \delta) |A|$. Then $41A-40A$ contains a $d$-dimensional progression $P$ with $d \leq 2$ and $|P| \gg_{\delta} |A|$. 
\end{Corollary}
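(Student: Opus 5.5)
The plan is to extract Corollary \ref{bogol1} from the same continuous/structural reduction that proves Theorem \ref{main}, in three steps.

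\textbf{Step 1: a Bohr-set model.} Since $\sigma[A]\le 4+\delta$, Freiman's theorem puts $A$ inside a proper progression of dimension $O(1)$ and size $O(|A|)$. Applying an arithmetic regularity lemma (or a Bogolyubov--Ruzsa lemma) to $A$, we find a regular Bohr set $B=B(\Gamma,\rho)$ such that:
\begin{enumerate}
\item $|\Gamma|\ll_\delta 1$ and $|B|\gg_\delta |A|$;
\item $B\subseteq 2A-2A$;
\item $A$ is modelled by a function on a torus $\TT^k$, or on a group $\RR^{m}\times\TT^{k}$ after passing to the relevant subtorus, via a homomorphism $\phi$ whose fibres have controlled size.
\end{enumerate}
The doubling hypothesis then transfers to the model set $\tilde A$ as $\mu(\tilde A+\tilde A)\le(4+\delta+o(1))\mu(\tilde A)$.

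\textbf{Step 2: the continuous inverse theorem.} Next we apply the continuous inverse results: the inverse Kneser theorem for $\TT^k$ and Brunn--Minkowski for the Euclidean part. Doubling $4+\delta$ caps the Brunn--Minkowski contribution at $2^{d}\le 4+\delta$, so the effective Euclidean dimension $d$ is at most $2$. This is where the number $2$ in the corollary comes from. The conclusion is that the pullback of $\tilde A$ is close to one of the following:
\begin{enumerate}
\item a rank-$\le 2$ structure, such as a box or interval, times a compact subgroup;
\item a Bohr-type neighbourhood of a one-parameter subgroup.
\end{enumerate}

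\textbf{Step 3: extracting the progression and counting summands.} A $d$-dimensional Bohr set of rank $d\le 2$, intersected with $\ZZ$ inside the relevant coset, contains a proper $d$-dimensional progression of comparable size by the standard geometry-of-numbers argument (Minkowski's second theorem). The large constants $41$ and $40$ arise from iterating the following:
\begin{enumerate}
\item the regularity step, where the Bohr set sits in $2A-2A$ and a few more copies are used to clean up the exceptional portion of $A$;
\item removing sparse fibres;
\item adding enough copies of the dense structured piece $A\cap\phi^{-1}(X)$ to fill in a full progression.
\end{enumerate}
For the filling step, we use that a set of density $\ge 1/2-\varepsilon'$ in a $1$-dimensional progression $P$ has $O(1)$-fold sumset containing a large subprogression. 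This follows from Kneser-type or $3k-4$ arguments: density above $1/2-\varepsilon'$ gives $A'-A'\supseteq$ a long progression after one or two iterations. Similarly, density $\ge 1-\varepsilon'$ in a $2$-dimensional $Q$ gives $A'+A'-A'-A'$ containing a large sub-box. Tracking these steps yields some $(\ell+1)A-\ell A$; the specific $\ell=40$ is whatever the bookkeeping produces, so we would only aim for an explicit bound of this size.

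\textbf{Main obstacle.} The main obstacle is Step 2, the Euclidean-dimension bound together with the torus component. When the compact part is nontrivial, one must show that the preimage of a neighbourhood of a subgroup really gives a rank-$\le 2$ Bohr set and not a higher-rank one. The approach I would try first is to use the inverse Kneser theorem to show that $\tilde A$ is essentially a union of cosets of an open subgroup times a low-dimensional convex set. The $2^{d}$ lower bound then rules out $d\ge 3$, since for $\delta$ small we have $8>4+\delta$.
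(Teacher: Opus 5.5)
There is a genuine gap, and it is your Step 2. The dichotomy you assert there is the entire content of Theorem \ref{main}, in its dense form Theorem \ref{m1}. You neither use that theorem's conclusion nor prove it, and the route you sketch points the wrong way.

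\begin{itemize}
\item Open subgroups of $\ZZ/q\ZZ\times\TT^d$ contain the whole torus factor. The near-equality cases of Kneser's inequality on $\TT^d$ are instead strips $\varphi^{-1}(I)$, with $\varphi:\TT^d\to\TT$ surjective (Lemma \ref{tao}). These are neighbourhoods of a coset of the codimension-one subgroup $\ker\varphi$, not of a one-parameter subgroup. So ``cosets of an open subgroup times a convex set'' misses exactly the case that produces the $2$-dimensional conclusion.
\item The second dimension does not come from a Euclidean factor capped by $2^d\le 4+\delta$. In the model the regularity lemma gives for a dense subset of $[N]$, the Euclidean factor is just $[0,1]$. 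The second dimension appears because one fibre $A\cap I_{a,i}$ is dense in the rank-one Bohr set $B=\{n\in I_{a,i}: n(\vec a\cdot\vec\theta)\in I\}$, which contains a proper $2$-dimensional progression (Lemma \ref{geomnumber}).
\item Since that pullback is automatically of rank one, your stated obstacle is not the real one. What must be proved is $\n{\vec a}_2\ll_{M,\eta,\varepsilon',\delta}1$ (Proposition~\ref{prop: controlling |a|_infty}). Without it, $\1_{\varphi^{-1}(I)}$ has no Lipschitz approximation of controlled constant, and equidistribution of $n\vec\theta$ says nothing about $|B|$ or $|A\cap B|$.
\item You also need the stability input $\mu(K_{a,i}\setminus T_{a,i})<\delta^{1/2}$. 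This is what gives density $1-O(\varepsilon')$ on $B$ rather than mere positive density.
\item Step 1 glosses a related point. The model is a Lipschitz function, not a set, and only popular sums transfer. So the $\delta$-gain must be won on $\gg\alpha qM$ fibre differences, with every loss kept at $o(\delta)$.
\end{itemize}

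Given Theorem \ref{main}, the corollary is short. Pass via Lemma \ref{greenruzsa} to a Freiman $2000$-isomorphic copy $A'\subseteq[N]$ with $N\ll|A|$, apply Theorem \ref{m1}, find the progression in $41A'-40A'$, and pull it back through the isomorphism. Regularity and discarding sparse fibres cost no summands; they only locate $X=A'\cap Q'$ or $X=A'\cap P'$.

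In the $2$-dimensional case your sub-box claim is fine. The paper in fact proves $Q'\subseteq 41X-40X$ (Lemma \ref{2dimpullback}). It applies Lemma \ref{egmcover}, which gives $P\subseteq 5Y-4Y$ when $|Y|>|P|/2$, first to the rows on which $X$ has density $>1/2$ and then to the columns of $5X-4X$. That is where $41=25+16$ and $40=20+20$ come from.

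In the $1$-dimensional case your Step 3 is unjustified. Density $1/2-O(\varepsilon')$ is below the threshold of Lemma \ref{egmcover}. Also $X-X$ need not contain any progression of length $\gg L$: take $X=\{n\in[L]:\n{n\alpha}_{\TT}<1/4-\varepsilon'\}$ with $\alpha$ badly approximable. The paper instead passes to the smallest progression containing $X$, where the density exceeds $2/5$. It then uses Lev's Lemma \ref{lev} to get $P'\subseteq 9X-8X\subseteq 41X-40X$, the last inclusion holding because $0\in X-X$.

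Finally, leaving $\ell$ as ``whatever the bookkeeping produces'' does not establish the stated $41A-40A$.
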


As in the case of Theorem \ref{main} the nice aspect of this result, compared to the usual Bogolyubov--Ruzsa lemma \cite{Sa2012}, is that in our specific setting we are able to get a sharp upper bound on the dimension $d$ of our progression at the cost of considering $41A - 40A$ instead of $2A-2A$. The proof of Corollary \ref{bogol1} is mentioned at the end of \S\ref{modelsection}.

It is natural to compare Theorem \ref{main} with the usual Freiman-type results. For example, for sets $A \subseteq \mathbb{Z}$ with $\sigma[A] \leq 4 + \delta$ for some small $\delta >0$, the Freiman--Bilu theorem \cite{Bi1999, GT2006} allows one to find a proper $d$-dimensional progression $P$ such that $|P| \gg |A|$ and $d \leq 2$ and $|A \cap P|/|P| \gg 1$. Thus at the cost of finding a potentially smaller progression, Theorem \ref{main} can precisely estimate the density of $A$ in $P$. An optimal understanding of this density seems to be the key theme of Question \ref{grprob}, and in fact, an affirmative answer to  Question \ref{grprob} for arbitrary $K>1$ does not seem to follow even conditionally on the polynomial Freiman--Ruzsa conjecture over the integers \cite[Conjecture 1.5]{Sa2013}.

It might also be tempting to compare Theorem \ref{main} to an inverse Brunn--Minkowski type result in $\mathbb{R}^2$ \cite{FJ2015, vHST2024} due to the two dimensional nature of our result. We remark that such results are much harder to prove in $\mathbb{Z}$, in part due to the fact that direct analogues of the continuous results often fail to hold in the discrete setting. For example, given integer $d \geq 2$ and some compact set $\mathcal{A} \subset \mathbb{R}^d$, the classical Brunn--Minkowski inequality implies that $\nu_d(\mathcal{A} - \mathcal{A}) \geq 2^d \nu_d(\mathcal{A})$, where $\nu_d$ is the Lebesgue measure in $\mathbb{R}^d$. On the other hand, there exist arbitrarily large finite sets $A \subseteq \mathbb{Z}^d$ with $\dim(A) = d$, where $\dim(A)$ denotes the dimension of the affine span of $A$ over $\mathbb{R}$,  which satisfy
\[ \sigma_{-}[A] \leq 2d -2 + \frac{1}{d-1}  .\]
This doubling constant is much smaller than $2^d$ as $d$ grows. In fact, just proving that this upper bound is sharp was a problem of Ruzsa that was only resolved very recently, see \cite{CL2025, Mu2022}. 

Another aspect where our result differs from the continuous setting is that it is indeed necessary to pass to a dense structured subset in the statement of Theorem \ref{main}, that is, it is not possible to have the entirety of $A$ contained in $P$ or $Q$ in Theorem \ref{main}. Indeed, consider the set $A_1 = \{0,10N,2^N\} + [N]$ with $N \geq 100$, where $[N]$ denotes the set $\{1,2,3, \dots, N\}$. Then one can see that
\[ |A_1 + A_1| = 6 (2N - 1) \leq 4 |A_1| - 6. \]
Moreover $A_1$ is not contained in any $2$-dimensional progression $Q$ such that $|Q| \ll |A_1|$. In comparison, the quantitative inverse Brunn--Minkowski result of Figalli--Jerison \cite{FJ2015} in $\mathbb{R}^2$  implies that any compact set $\mathcal{A}$ with $\nu_{2}(\mathcal{A})>0$ and $\nu_2(\mathcal{A} + \mathcal{A}) \leq (4 + \delta) \nu_2(\mathcal{A})$ must satisfy $\mu_2(\mathcal{C} \setminus \mathcal{A}) \ll \delta^{c}$ for some constant $c>0$, where $\mathcal{C}$ is the convex hull of $\mathcal{A}$. Thus, in the continuous setting, the entirety of $\mathcal{A}$ gets covered efficiently by a structured set.

Despite these contrasting features with the continuous setting, it is perhaps interesting that part of our proof of Theorem \ref{main} roughly involves understanding its counterpart over $\mathbb{Z}/q\mathbb{Z} \times [0,1] \times \mathbb{T}^d$, where $q, d$ are bounded, positive integers, and $\mathbb{T}$ denotes the torus $\mathbb{R}/\mathbb{Z}$. This, in turn, involves a combination of the Brunn-Minkowski inequality in $\mathbb{R}^2$ along with inverse Kneser type results due to Tao \cite{Ta2018} in $\mathbb{T}^d$. This transference of settings is done via the arithmetic regularity lemma, see \cite{GT2010, EGM2014, Eb2015} for further details about the latter.

\subsection*{Proof Ideas}

We will now provide a very rough proof sketch of our main result, brushing many technical details and combinatorial and analytic manoeuvres under the rug. We first consider the case when the set $A \subset \mathbb{Z}$ is dense in $[N]$. In this case, our setup follows the work of Eberhard--Green--Manners \cite{EGM2014} where one applies the abelian arithmetic regularity lemma along with various properties of Lipschitz functions over nilsequences and functions with small $\ell^2$ and $U^2$ norms to reduce the question to studying an analogous problem for subsets $\mathcal{A}$ of $\mathbb{Z}/q \mathbb{Z} \times [M] \times \mathbb{T}^d$, where $M$ is large but in a controlled manner, and $q, d \leq M$ are positive integers. Here one should think of $[M]$ as a discretisation of the usual interval $[0,1]$ present in the application of the abelian arithmetic regularity lemma. We now partition 
\[ \mathcal{A} = \bigcup_{\substack{1 \leq  a \leq q, \\ i \in [M]}} \mathcal{A}_{a,i} \]
as fibres over $\mathbb{T}^d$, where almost all fibres $\mathcal{A}_{a,i}$ satisfy 
\[ \mu(\mathcal{A}_{a,i}) = |A \cap I_{a,i}|/|I_{a,i}| +o(1) ,\]
with $I_{a,i}$ being the arithmetic progression
\[ I_{a,i} = \{n \in [N] : n \equiv a \ ({\rm mod} \ q) \ \text{and} \  n/N \in ((i-1)/M, i/M] \} \]
and $\mu$ being the Lebesgue measure over $\mathbb{T}^d$. One may assume that $\mu(A_{a,i}) \leq 1/2 + o(1)$ for all $a,i$ since otherwise we would be obtain the desired density on a $1$--dimensional progression. 

In the setting of Eberhard--Green--Manners, the question now reduces to studying a Brunn--Minkowski--Kneser type problem for the set
\begin{equation}  \label{introset}
\bigcup_{\substack{1 \leq a,a'\leq q, \\ i,i' \in [M]}} (\mathcal{A}_{a,i} + \mathcal{A}_{a',i'} ).
\end{equation}
At this point, Eberhard--Green--Manners split the fibres into two groups: the set of small fibres (those which satisfy $\mu(\mathcal{A}_{a,i}) < \varepsilon$) and the set of not-small fibres (those which satisfy $\mu(\mathcal{A}_{a,i} )\geq \varepsilon)$. Here $\varepsilon>0$ is some sufficiently small parameter. They are then able to safely ignore the small fibres and apply a combination of Brunn--Minkowski and Kneser type inequalities along with various clever combinatorial and analytic arguments on the not-small fibres to derive the desired conclusion of having doubling at least $4 - \varepsilon$.

This is where we are required to introduce further novelties, since our setting necessitates us to prove an \emph{inverse} Brunn--Minkowski--Kneser type result for the set described in \eqref{introset}. Thus, we introduce two new parameters of smallness: $\varepsilon'$ and $\delta$ such that $1> \varepsilon' > \delta > \varepsilon$, where each subsequent term is sufficiently small in comparison to the previous terms. With this in hand, we split the set of not-small fibres into medium-sized fibres (those which satisfy $\varepsilon < \mu(\mathcal{A}_{a,i}) \leq 10 \varepsilon'$) and large-sized fibres (those which satisfy $10 \varepsilon' \leq \mu(\mathcal{A}_{a,i})$). Using a Markov type inequality, one can quickly prove that there are $\gg qM$ many large-sized fibres. Amongst the set of large-sized fibres, we choose the biggest fibre, say $\mathcal{A}_{a_0,i_0}$ and consider the sums $\mathcal{A}_{a_0,i_0} + \mathcal{A}_{a,i}$, where $(a,i)$ varies amongst the large-sized fibres.  We apply an inverse Kneser result due to Tao \cite{Ta2018} along with a weighted Kneser-type inequality for these sumsets. This enables us to divide our proof into two cases. The first is the \emph{expansion} case, where we obtain a large doubling of the shape $4+ \delta$ for each of these sumsets. The second is the \emph{structured} case, where at least one of the large-sized fibres  $\mathcal{A}_{a,i}$ is very well-approximated by the inverse image of a $1$-dimensional interval in $\mathbb{T}$ under some homomorphism $\varphi: \mathbb{T}^d \to \mathbb{T}$ while also being the approximate support of some $1$-bounded Lipschitz function $F_{a,i}$.

In the expansion case, roughly speaking, our strategy allows us to obtain a doubling of the form $2+ \delta$ on a positive proportion of the sumset fibres $\mathcal{A}_{a,i} + \mathcal{A}_{a',i'}$, while obtaining a doubling of $2$ on the almost all the remaining sumset fibres. Here, it is quite important to obtain the stronger doubling on a positive proportion of the $qM$ many fibres since otherwise, we would not gain a strong enough doubling on the entire sumset $\mathcal{A} + \mathcal{A}$ that could dominate the $o(\delta)$ errors that we incur throughout this process. The above conclusion combines nicely with the Brunn--Minkowski setup of Eberhard--Green--Manners to derive a strong doubling estimate of the shape $4 + \delta - o(\delta)$ for $\mathcal{A} + \mathcal{A}$. We now use the arithmetic regularity lemma setup to pull this expansion back for our original sumset $A+A$.

The structured case requires some further new ideas. We begin by deriving a crucial piece of information: the function $F_{a,i}$ is almost $1$ on almost all of $\mathcal{A}_{a,i}$ while being almost $0$ everywhere else. This follows from a stability analysis of the aforementioned weighted Kneser-type inequality. Now, note that all homomorphisms $\varphi: \mathbb{T}^d \to \mathbb{T}$ satisfy $\varphi( \vec{x}) = \vec{a} \cdot \vec{x}$ for all $\vec{x} \in \mathbb{T}^d$, for some fixed non-zero vector $\vec{a} \in \mathbb{Z}^d$. Let $I \subseteq \mathbb{T}$ be the interval such that the large-sized fibre $ \mathcal{A}_{a,i}$ is well-approximated by $\varphi^{-1}(I)$. Our next aim is to show that $\vec{a}$ is well-controlled in size. This involves a combination of various technical convex geometric arguments along with some input from the geometry of numbers and the fact that $\mathcal{A}_{a,i}$ also roughly acts as the support of $F_{a,i}$. With this in hand, we can prove that the set $B = \{ n \in I_{a,i} : \varphi( \vec{\theta} n ) \in I \}$, where $\vec{\theta}$ is some \emph{$(\cF(M), N)$-irrational} vector in $\mathbb{T}^d$, is a large inhomogeneous Bohr set. Upon combining these ideas with properties of functions with small $\ell^2$ and $U^2$ norms as well as the fact that $F_{a,i}$ is almost $1$ on almost all of $\mathcal{A}_{a,i}$, we can prove that $A$ has density $1 - o(1)$ on $B$. We now want to prove that $B$ contains a proper $d$-dimensional progression $Q$ such that $|Q| \gg |B|$ and $d \leq 2$. Using the $(\cF(M),N)$-irrationality of $\vec{\theta}$ and the fact that $\vec{a}$ is controlled in size, we see that $\theta = \varphi(\vec{\theta})$ is $(\cF(M), N)$-irrational as well, wherein, equidistribution theory allows us to reduce our problem to the case when $B$ is a homogeneous Bohr set. We now use the theory of continued fractions as well as geometry of numbers to deduce the desired claim. Finally, since $A$ has a density $1 - o(1)$ on $B$, we immediately get that $A$ has a very high density on $Q$ as well. 

In fact, for the purposes of applying the arithmetic regularity lemma, it turns out that one needs to deal with popular sumsets instead of the entire sumset $A+A$ and study smoothened versions of characteristic functions. This forces us to incur various losses throughout the above proof and an important technical aspect of the above strategy is to control these losses by $o(\delta)$, since we only win a factor of $\delta$ over the Brunn--Minkowski--Kneser lower bound. 

Finally, in order to deduce the result for arbitrary sets $A \subset \mathbb{Z}$, we need to prove a Bogolyubov-type lemma for $2$-dimensional progressions. This requires multiple applications of a $1$-dimensional Bogolyubov-type result of Lev \cite{Lev1997} along with an analogous result of Eberhard--Green--Manners, both further amalgamated with a popularity type argument done along all the $1$-dimensional fibres of the $2$-dimensional progression. We combine this Bogolyubov-type lemma along with an application of a nice modelling lemma due to Green--Ruzsa \cite{GR2006} to conclude the proof.

\subsection*{Outline} In \S\ref{modelsection}, we reduce the proof of the general case of Theorem \ref{main} to the setting when $A$ is dense in $[N]$. We use \S\ref{reglemmasteup} to record the setup for applying the arithmetic regularity lemma and using to transfer our setting to  $\mathbb{Z}/q\mathbb{Z} \times [M] \times \mathbb{T}^d$. \S\ref{invKnesersection} is used for splitting the fibres into different sized classes and for applications of the inverse Kneser type result of Tao as well as weighted Kneser type inequalities. This is where we are able to divide our proof into two major cases, the Expansion case and the $2$-dimensional structured case. We resolve the expansion case in \S\ref{expansionsection}. We begin our analysis of the $2$-dimensional structured case in \S\ref{structuresection} where we employ a variety of convex geometric arguments to control the operator norm of the homomorphism $\varphi: \TT^d \to \TT$. In \S\ref{bohrsection}, we use this information along with various other arguments to prove that $A$ has density $1- o(1)$ on some inhomogeneous Bohr set. We employ \S\ref{continuedfracsection} to pass from the inhomogeneous Bohr set to a proper $2$-dimensional progression. We also supply some preliminary results concerning the abelian arithmetic regularity lemma in Appendix \ref{arithmeticregularityappendix}.

\subsection*{Notation} 
We use Vinogradov notation, that is, we write $X \ll_z Y$,  or equivalently $X= O_z(Y)$, to mean that $|X| \leq C_z Y$, where $C_z>0$ is some constant depending on the parameter $z$. For any $\theta \in \mathbb{R}$, we denote $e(\theta) = e^{2 \pi i\theta}$. For any finite subset $X$ of some abelian group and any $k \in \mathbb{N}$, we write $X^k = \{(x_1, \dots, x_k) : x_1, \dots, x_k \in X\}$. Moreover, we will use $\vec{v}$ to denote the vector $(v_1, \dots, v_k) \in X^k$. For any $n \in \mathbb{Z}$ and any $\vec{v}, \vec{u} \in X^k$, we write $n \vec{v} = (nv_1, \dots, nv_k)$ and $\vec{v} \cdot \vec{u} = v_1 u_1 + \dots + v_k u_k$.   Given a positive real number $X$, we use $[X]$ to denote the set $\{1,2,\dots, \lfloor X \rfloor\}$.

\subsection*{Acknowledgements}
YJ and AM were partly supported by Ben Green’s Simons Investigator Grant, ID:376201. AM is supported by a Leverhulme early career fellowship \texttt{ECF-2025-148}.



\section{Dense model lemma} \label{modelsection}

Using various results from additive combinatorics, we can reduce the proof of Theorem \ref{main} to the setting when $A$ is a dense subset of $[N]$.

\begin{theorem} \label{m1}
    Let $\alpha > \varepsilon' > \delta > \varepsilon$ be a decreasing sequence of positive real numbers with each term being sufficiently small in terms of the previous one. Then every $A \subseteq [N]$ with $|A| = \alpha N$ satisfies at least one of the following. 
    \begin{enumerate}
        \item \label{it1} We have $|A+A| > (4 + \delta)|A|$.
        \item \label{it2} There exists an arithmetic progression $P \subseteq [N]$ with $|P| \gg_{\varepsilon} N$ such that 
        \[ \frac{|A \cap P|}{|P|} \geq 1/2 - O(\varepsilon').\]
        \item \label{it3} There exists a proper $2$-dimensional arithmetic progression $Q \subseteq [N]$ with $|Q| \gg_{\varepsilon} N$ such that
        \[ \frac{|A \cap Q|}{|Q|} \geq 1 - O(\varepsilon'). \]
        Moreover, the same conclusion holds with $A+A$ replaced by $A-A$.
    \end{enumerate}
\end{theorem}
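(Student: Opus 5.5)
Theorem \ref{m1} is the dense case, and it is the heart of the paper. My plan follows the Eberhard--Green--Manners regularity framework, upgraded to an inverse statement. I argue by contradiction. Suppose $|A+A| \le (4+\delta)|A|$, or the difference-set analogue, and suppose conclusions (\ref{it2}) and (\ref{it3}) both fail.

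\textbf{Step 1: regularity and the continuous model.} Apply the abelian arithmetic regularity lemma to $1_A$ with a growth function $\mathcal F$ and complexity parameter $\varepsilon$. This writes $1_A = f_{\rm str} + f_{\rm sml} + f_{\rm unf}$, where the structured part is $f_{\rm str}(n) = F(n \bmod q, n/N, \vec\theta n)$ with $F$ Lipschitz on $\ZZ/q\ZZ \times [0,1] \times \TT^d$, the vector $\vec\theta$ is $(\mathcal F(M),N)$-irrational, and $q,d \le M = O_\varepsilon(1)$. Discretise $[0,1]$ into $M$ blocks to obtain the fibres $\mathcal A_{a,i}\subseteq \TT^d$, defined as the superlevel sets of $F$. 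Their measures match the densities of $A$ on the progressions $I_{a,i}$, up to $o(1)$ errors. Pass to the popular sumset: by the small $\ell^2$ and $U^2$ norm bounds, every point of $\mathcal A_{a,i}+\mathcal A_{a',i'}$ lifts to many representations in $A+A$. This turns the hypothesis into a bound of the form $4+\delta+o(\delta)$ on a suitably weighted measure of $\bigcup(\mathcal A_{a,i}+\mathcal A_{a',i'})$. The failure of (\ref{it2}) on each $I_{a,i}$ gives $\mu(\mathcal A_{a,i}) \le 1/2+O(\varepsilon')$ for all $(a,i)$.

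\textbf{Step 2: fibre classes and inverse Kneser.} Split the fibres into small ($<\varepsilon$), medium ($\le 10\varepsilon'$) and large. A Markov-type argument gives $\gg qM$ large fibres. Fix the largest fibre $\mathcal A_{a_0,i_0}$ and study $\mathcal A_{a_0,i_0}+\mathcal A_{a,i}$ for large $(a,i)$, using Kneser in $\TT^d$ together with Tao's inverse Kneser theorem. This yields a dichotomy. In the \emph{expansion} case, a positive proportion of these sumsets satisfy $\mu(\mathcal A_{a_0,i_0}+\mathcal A_{a,i}) \ge \mu(\mathcal A_{a_0,i_0})+\mu(\mathcal A_{a,i})+c\delta'$, with $\delta'\gg\delta$. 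In the \emph{structured} case, some large fibre is close in measure to $\varphi^{-1}(I)$ for a homomorphism $\varphi(\vec x)=\vec b\cdot\vec x$ and an interval $I\subseteq\TT$.

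\textbf{Step 3: expansion case.} Feed the gain into the EGM Brunn--Minkowski argument over the $\ZZ/q\ZZ\times[M]$ directions. The one-dimensional Brunn--Minkowski inequality along $[M]$ doubles the total mass, and the $\ZZ/q\ZZ$ direction contributes the second factor of $2$. The $c\delta'$ gain on a positive proportion of the $qM$ sumset fibres then forces the total to be at least $4+\delta'' $ with $\delta''$ dominating all $o(\delta)$ losses. Transferring back gives $|A+A|>(4+\delta)|A|$, a contradiction.

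\textbf{Step 4: structured case.} Stability in the weighted Kneser inequality shows that $F_{a,i}\approx 1$ on most of $\mathcal A_{a,i}$ and $\approx 0$ elsewhere. I then bound $\|\vec b\| = O_{\varepsilon'}(1)$ by geometry of numbers. The idea is that a large $\vec b$ would make $\varphi^{-1}(I)$ highly oscillatory, which is incompatible with $F$ being Lipschitz. With $\vec b$ controlled, irrationality passes to $\theta=\vec b\cdot\vec\theta$. The set $B=\{n\in I_{a,i}:\theta n\in I\}$ is then a large Bohr set on which $A$ has density $1-O(\varepsilon')$; this uses the smallness of $f_{\rm sml}$ and $f_{\rm unf}$. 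Equidistribution reduces $B$ to a homogeneous Bohr set. Continued fractions then show that $B$ contains a proper $2$-dimensional progression $Q$ with $|Q|\gg_\varepsilon N$, which gives (\ref{it3}), again a contradiction.

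The case $A-A$ is handled identically, using $-\mathcal A$ in place of $\mathcal A$ for the second summand.

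\textbf{Main obstacle.} I expect the hardest part to be Step 4's control of $\|\vec b\|$, together with keeping every regularity loss at $o(\delta)$. Step 3 only wins by $\delta$, so the error bookkeeping in the choice $\varepsilon\ll\delta$ is delicate.
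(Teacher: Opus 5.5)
Your overall architecture matches the paper's: regularity, fibres over $\TT^d$, Kneser and inverse Kneser, the EGM Brunn--Minkowski lemma, bounding the frequency vector, then Bohr set to progression. However, Steps 2 and 3 do not fit together as written.

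The Brunn--Minkowski input, Lemma~\ref{egmbm}, lower-bounds $\sum_{(b,j)}\max_{a-a'=b,\ i-i'\in\{j,j-1\}}(\beta(a,i)+\beta(a',i'))$. So an expansion gain only helps if it is a gain over the \emph{maximising} pair for each $(b,j)$. You establish the gain over $\mu(\mathcal A_{a_0,i_0})+\mu(\mathcal A_{a,i})$, i.e.\ for pairs through the largest fibre, and such a pair is typically not the maximiser.

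For example, take $q=1$ and a strictly concave profile $\alpha(i)=\alpha_0(1-((i-i_0)/R)^2)$. This is a configuration where planar Brunn--Minkowski is essentially tight. The maximiser at difference $j$ is the symmetric pair $(i_0+j/2,\,i_0-j/2)$, and it beats $(i_0,\,i_0-j)$ by $\alpha_0j^2/(2R^2)$. This exceeds $\delta$ for all but an $O(\sqrt{\delta/\alpha})$ proportion of $j$. So a $c\delta'$ gain on pivot pairs does not give a gain on ``a positive proportion of the sumset fibres'', and you give no argument controlling how much of the gain survives in general.

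The paper uses $(a_0,i_0)$ only to produce the set $J'$ of at least $|J|\gg\alpha qM$ differences. For each $(b,j)\in J'$ it runs the Kneser/inverse-Kneser dichotomy on the maximising pair itself. By \eqref{2min}, that maximum is at least $\alpha(a_0,i_0)+10\varepsilon'$. Hence both members of the maximising pair are large fibres, so Lemma~\ref{tao} applies to them, and the larger one has density at least $\alpha(a_0,i_0)/2$ (this is the set $J''$). The density bound is what makes the structured alternative usable. The error $\mu(S_{a,i}\Delta\varphi^{-1}(I))\le\varepsilon'$ is absolute, so getting relative density $1-O(\varepsilon')$ on $B$ needs $\mu(\varphi^{-1}(I))\gg_\alpha 1$, not merely $\gtrsim 10\varepsilon'$.

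Relatedly, Step 1 claims the fibre measures match $\alpha(a,i)$, giving $\mu(\mathcal A_{a,i})\le 1/2+O(\varepsilon')$; this is not right. A superlevel set only satisfies $\mu(\{F_{a,i}\ge t\})\ge\int F_{a,i}-t$. Moreover, you need the opposite sign, $\mu(S_{a,i})<1/2-\varepsilon'$. This is required both for the hypothesis $\mu(S_1)+\mu(S_2)\le 1-\varepsilon'$ of Lemma~\ref{tao} and to keep the gain clear of the $\min\{1,\cdot\}$ cap.

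The paper builds the weighted Kneser step into the case split from the start, using thresholds $\eta/2,\eta,1-\eta^c$. If $\mu(K_{a,i}\setminus T_{a,i})\ge\delta^{1/2}$, then \eqref{hpl2} already yields a gain of $\eta^c\delta^{1/2}=\delta$. Otherwise $\mu(S_{a,i})\approx\alpha(a,i)$, and the failure of the $1$-dimensional conclusion transfers to $S_{a,i}$.

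Smaller points:
\begin{itemize}
\item The factor $4$ in Lemma~\ref{egmbm} comes from planar Brunn--Minkowski in the (position in $[M]$, fibre density) plane, with densities adding by Kneser in $\TT^d$. The $\ZZ/q\ZZ$ direction contributes no doubling.
\item The frequency bound cannot be $O_{\varepsilon'}(1)$, since $F$ is only $M$-Lipschitz. The paper uses a surface-area argument: the boundary of $\varphi^{-1}(I)$ has $(d-1)$-measure $2\n{\vec a}_2$ and mostly lies near $\{F_{a,i}=\eta\}$, whose $\eta/M$-neighbourhood sits inside $K_{a,i}\setminus T_{a,i}$. This needs smallness of the transition region, not just the Lipschitz property. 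It gives a bound depending on $M,\eta,\varepsilon'$, which suffices because $\cF$ grows fast.
\item For rational $\theta$ the continued fraction can terminate too early; the paper handles this case with Minkowski's second theorem.
\end{itemize}
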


We will utilise this section to show how one may proceed with this reduction in the case when we analyse sumsets $A+A$ in Theorems \ref{main} and \ref{m1}. The reduction in the case of difference sets $A-A$ follows in a very similar and, in fact, slightly simpler fashion.

Thus, let $l \in \mathbb{N}$ and let $X \subset \mathbb{Z}$ be a set such that $|X| \geq 2$, and $X \subseteq \{0,1,2,\dots, l\}$ with $0, l \in X$. Moreover, suppose that there exists no $n \in \mathbb{N}$ with $n \geq 2$ such that $n | x$ for all $x \in X$. Given $h \in \mathbb{N}$, we define
\[ hX = \{ x_1 + \dots + x_h : x_1, \dots, x_h \in X\}. \]
In this setting, we have the following result due to Lev \cite[Lemma 1]{Lev1997}. 

\begin{lemma} \label{lev}
    Let $k,r \in \mathbb{N}$ satisfy 
    \[ k \leq \frac{l-1}{|X|-2} \leq k +1  \ \ \text{and} \ \ r = (k+1)(|X| - 2) - (l-2).  \]
    Then 
    \[   2k X \supseteq [ kl - kr, kl + kr] \cap \mathbb{Z}, \ \ \text{and} \ \  (2k+1)X \supseteq  [ kl - kr, (k+1)l + kr] \cap \mathbb{Z}. \]
\end{lemma}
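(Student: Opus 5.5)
We would prove Lemma \ref{lev} by induction on the number of summands $h$, tracking the structure of $hX$ near its two ends and showing that the ``gaps'' fill in at a controlled rate. Write $n=|X|$. The basic tool is the elementary fact that $X \subseteq [0,l]$ with $0,l\in X$ and $\gcd(X)=1$ forces sumsets to grow. Concretely, for any finite $Y\subseteq\ZZ$, the set $Y+X$ contains both $Y$ (since $0\in X$) and $Y+l$ (since $l\in X$). Sets of this kind grow by at least $|X|-2=n-2$ new elements per step inside any window of length $l$ that is not yet full. This is a Freiman-type $3k-3$ counting step (as in Lev--Smeliansky \cite{LS1995}), applied to residues modulo $l$.

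The key steps, in order, are as follows.
\begin{enumerate}
\item Reduce modulo $l$. Let $\bar X\subseteq \ZZ/l\ZZ$ be the image of $X$. Then $|\bar X| = n-1$, since $0$ and $l$ collapse to one residue. Moreover $\bar X$ generates $\ZZ/l\ZZ$, because $\gcd(X)=1$.
\item Use a Kneser/Cauchy--Davenport type argument in $\ZZ/l\ZZ$. For any residue set $S$, we have $|S+\bar X|\geq \min(l, |S|+|\bar X|-1)$ unless there is a periodic obstruction. The generation condition rules out such an obstruction by the standard Lev argument. So the number of residues covered by $j\bar X$ grows by at least $n-2$ per step.
\item Lift back to $\ZZ$ and count the elements of $hX$ lying in each block $[ml,(m+1)l)$. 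For each residue $c$, the integers in $hX$ congruent to $c$ form a ``staircase'': once $c$ is attained in $jX$ at height $t$, it is attained in $hX$ at heights $t, t+l,\dots,t+(h-j)l$. Counting these shows that the middle window of $2kX$ is covered without gaps. With $k(n-2)\approx l-1$, we get $2kX\supseteq[kl-kr,kl+kr]$, where $r=(k+1)(n-2)-(l-2)$ measures the overshoot of the residue count at step $k+1$.
\item Derive the statement for $2k+1$ from the one for $2k$. Add $X\ni 0,l$ to get $(2k+1)X \supseteq [kl-kr,kl+kr]\cup[(k+1)l-kr,(k+1)l+kr]$. The middle piece $[kl+kr,(k+1)l-kr]$ is filled by the same staircase count.
\end{enumerate}

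The main obstacle is step 3. We must prove an exact interval, not just a size bound. This requires showing that for each residue $c$, the earliest level at which $c$ appears in $jX$ is compatible with a symmetric covering around $kl$. The symmetry $x\mapsto l-x$, which maps $X$ to $l-X$ and preserves the hypotheses, should handle the two sides simultaneously. The careful bookkeeping is where the constant $r$ emerges. Since this is a cited result (\cite[Lemma 1]{Lev1997}), in the paper we would simply invoke it. The plan above is how one reconstructs Lev's argument.
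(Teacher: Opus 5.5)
Citing Lev's Lemma 1 is exactly what the paper does, since it gives no argument of its own, so as a citation your proposal is fine. The reconstruction you sketch would not prove the lemma, however, because Step 2 is false: the fact that $\bar X$ generates $\mathbb{Z}/l\mathbb{Z}$ does not rule out a Kneser obstruction for $j\bar X+\bar X$.

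Take $l=15$ and $X=\{0,1,5,6,10,11,15\}$. Then $n=7$, $\gcd(X)=1$, $k=2$ and $r=2$. Here $\bar X=H\cup(H+1)$ with $H=\{0,5,10\}$, and $j\bar X=H\cup(H+1)\cup\dots\cup(H+j)$.

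\begin{itemize}
\item $|2\bar X|=9<\min\{15,|\bar X|+n-2\}=11$. So the residue count grows by $|H|=3$ per step, not by $n-2=5$.
\item $|3\bar X|=12<15$. So it is false that every residue is present by level $k+1$, which is the heuristic behind your Step 3 and your reading of $r$ as an overshoot.
\item The classes $\equiv 4 \pmod 5$ first appear at level $4=2k$. Hence $29,34\in[kl-kr,kl+kr]=[26,34]$ arise only as sums of four elements of $\{1,6,11\}$, with neither $0$ nor $l$ among the summands. There is no staircase from an earlier level to use.
\end{itemize}

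The same behaviour occurs for $l=5m$ and $X=\{0,l\}\cup\{5,10,\dots,5(m-1)\}\cup\{1,6,\dots,5m-4\}$ with any $m\ge 3$.

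The lemma is still true in these examples; for $l=15$ one even has $4X\supseteq[0,44]$. The reason is integer structure that is invisible modulo $l$: within each class mod $5$ the summands form long progressions. Correspondingly, $|2X|-|X|=11$ while $|2\bar X|=9$. So the integer results you allude to (Freiman's $3k-3$ theorem, Lev--Smeliansky) are not consequences of counting residues modulo $l$, and describing them as ``applied to residues modulo $l$'' hides exactly where the content lies.

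A genuine reconstruction has to control the integer sets $jX$ themselves. Even when no periodicity occurs, it must then convert counting information into the precise location of an interval around $kl$. You flag that as the main obstacle but leave it undone. Step 4's ``the middle piece is filled by the same staircase count'' is likewise unsupported. The reduction via $X\mapsto l-X$ is legitimate, since it shows it suffices to cover $[kl-kr,kl]$, but it does not touch the real difficulty.
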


Eberhard--Green--Manners \cite[Lemma 6.3]{EGM2014} used the above lemma to prove the following Bogolyubov type result. 

\begin{lemma} \label{egmcover}
    Let $P$ be a $1$-dimensional progression in $\mathbb{Z}$ with $|P| \geq 12$, let $X \subseteq P$ be a set such that $|X|/|P| >1/2$. Then $5X - 4X$ contains $P$.
\end{lemma}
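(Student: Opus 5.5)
Since $5-4=1$, the operation $X\mapsto 5X-4X$ commutes with affine maps $x\mapsto a_0+dx$. So I would first normalise to $P=\{0,1,\dots,L-1\}$ with $L=|P|\geq 12$ and $X\subseteq P$ with $|X|>L/2$. Write $m=\min X$ and $m+l=\max X$, and let $Y=X-m\subseteq\{0,\dots,l\}$, so that $0,l\in Y$. Then $5X-4X=m+(5Y-4Y)$, and the goal becomes showing that $5Y-4Y$ contains the integer interval $[-m,\,L-1-m]$.

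\emph{Step 1: the gcd condition.} Lemma \ref{lev} requires that no $n\geq 2$ divides every element of $Y$. If $g\geq 2$ divides all of $Y$, then $|Y|\leq l/g+1\leq (L-1)/2+1=(L+1)/2$. Combined with $|Y|>L/2$, this forces $g=2$, $L$ odd, $l=L-1$ and $X$ equal to the even elements of $P$. In that case $5X-4X$ consists of even numbers only. This degenerate configuration must be dealt with separately, either by reading the hypothesis as excluding it or by concluding only that $5X-4X$ contains a progression of the same length with doubled common difference. I would check the precise formulation in \cite[Lemma 6.3]{EGM2014} here. Apart from this case, the gcd of $Y$ is $1$ and Lemma \ref{lev} applies.

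\emph{Step 2: applying Lemma \ref{lev}.} Since $|Y|\geq (L+1)/2\geq (l+2)/2$, the ratio $(l-1)/(|Y|-2)$ is at most about $2$ plus a small error, so the relevant $k$ is $1$ or $2$. I treat $k=2$ in detail; $k=1$ is identical and easier. Lemma \ref{lev} gives
\[
4Y\supseteq [2l-2r,\,2l+2r]\cap\ZZ \quad\text{and}\quad 5Y\supseteq[2l-2r,\,3l+2r]\cap\ZZ,
\]
where $r=3(|Y|-2)-(l-2)$. Subtracting the first interval from the second gives
\[
5Y-4Y\supseteq[-4r,\,l+4r]\cap\ZZ .
\]
(For $k=1$ one instead uses $2Y$ and $3Y$ together with $0,l\in Y$, since $5Y-4Y\supseteq 3Y-2Y$.)

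\emph{Step 3: covering all of $P$.} It remains to check that $4r\geq m$ and $4r\geq L-1-m-l$. Put $s=L-1-l\geq m$. Then
\[
r\geq 3\bigl(\tfrac{L+1}{2}-2\bigr)-l+2=\tfrac{L}{2}+s-\tfrac32 ,
\]
so $4r\geq 2L+4s-6$. For $L\geq 12$ this exceeds $s\geq\max(m,\,L-1-m-l)$, which gives the required containment. The same bookkeeping is needed in the $k=1$ case.

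The main obstacle is bookkeeping rather than ideas. One must verify the exact range hypothesis on $k$ in Lemma \ref{lev} near the boundary values of $|Y|$, and handle small $L$ (this is where $|P|\geq 12$ is used). One must also treat the parity-degenerate configuration from Step 1 correctly, since the literal statement fails there.
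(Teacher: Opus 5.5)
Your proposal is correct, and the configuration you isolate in Step 1 is a genuine counterexample to the lemma as stated here, not a technicality. Take $P=\{0,1,\dots,12\}$ and $X=\{0,2,\dots,12\}$: then $|X|/|P|=7/13>1/2$, yet $5X-4X\subseteq 2\mathbb{Z}$ misses $1\in P$. So there is nothing further to check in the source.

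The paper gives no proof of its own. It only cites Eberhard--Green--Manners and indicates that the argument goes through Lemma \ref{lev}, so your route via Lemma \ref{lev} with $k\in\{1,2\}$ is the same approach.

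Your bookkeeping also closes in the case you left implicit:
\begin{itemize}
\item Since $|Y|>L/2\geq 6$, we have $l\geq 6$, hence $1\leq (l-1)/(|Y|-2)<3$ and $k\in\{1,2\}$.
\item For $k=1$, one has $r=2|Y|-l-2\geq s$. Therefore $5Y-4Y\supseteq 3Y-2Y\supseteq[-2r,l+2r]\supseteq[-m,L-1-m]$, and the first inclusion needs only $0\in Y$.
\end{itemize}

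What you prove is therefore the correct version of the lemma. It holds whenever $X$ does not lie in a single residue class modulo twice the common difference of $P$, for example whenever $|X|\geq (|P|+2)/2$.

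This matters for how the paper uses the lemma. The use in Lemma \ref{1dimpullback} is safe, because minimality of $P$ excludes the bad configuration. The row-wise application in Lemma \ref{2dimpullback}, however, relies on the literal version for sets $X_i$ of density just above $1/2$, where the bad configuration is not excluded. A slightly higher row-density threshold repairs that step.
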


The density condition in this lemma is sharp; indeed, if $P = [2l]$ and $X = 2 \cdot [l]$, then $mX - nX$ will never contain $1$. On the other hand, if we further assume that $P$ is  the smallest $1$-dimensional progression containing $X$, we can go beyond this density barrier.

\begin{lemma} \label{1dimpullback}
   Let $X \subseteq \mathbb{Z}$ be a finite set of integers with $|X| \geq 100$, and let $P$ be the smallest $1$-dimensional progression that contains $X$ with  $|X| /|P|>2/5$. Then $ 9X - 8X$ contains $P$.
\end{lemma}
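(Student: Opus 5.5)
Normalise first: after translating and dilating (both compatible with $9X-8X \supseteq P$), we may assume $P = \{0,1,\dots,l\}$, with $0,l \in X$ and $\gcd(X)=1$. Since $P$ is the smallest progression containing $X$, this is exactly the setting of Lemma \ref{lev}. Then $|X| > 2(l+1)/5$ and $|X| \geq 100$. The goal is to show $[0,l] \subseteq 9X - 8X$. The plan is to use Lemma \ref{lev} to find a long interval $J$ inside some $hX$ with $h \leq 5$. We then use the complementary structure $X - X$, or $X$ itself, to fill the rest of $[0,l]$ by translates.

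\textbf{Step 1: apply Lemma \ref{lev}.} Compute $(l-1)/(|X|-2)$, which is less than about $5/2$, so $k \in \{1,2\}$ (and $k=1$ when the ratio is below $2$). Take $r = (k+1)(|X|-2)-(l-2)$.
\begin{itemize}
\item If $k=2$: $r = 3|X| - l - 4 > l/5 - O(1)$. Lemma \ref{lev} gives $4X \supseteq [2l-2r, 2l+2r]$ and $5X \supseteq [2l-2r, 3l+2r]$. The interval $[2l-2r, 3l+2r]$ has length more than $l + 4r$, which exceeds $l$ by about $4l/5$.
\item If $k=1$: $r = 2|X| - l - 2 > -l/5 - O(1)$. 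Then $3X \supseteq [l-r, 2l+r]$, an interval of length $l + 2r$, possibly shorter than $l$. In this case we use $2X \supseteq [l-r, l+r]$ together with other sums.
\end{itemize}

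\textbf{Step 2: fill $[0,l]$ by subtracting.} For $n \in [0,l]$, write $n = s - t$ with $s$ in a long interval $J \subseteq hX$ and $t \in h'X$, where $h \leq 9$ and $h' \leq 8$. If $J = [u,v]$ has length at least $l$ and $t$ ranges over a set $T \subseteq h'X$ whose gaps are at most $v-u-l$, then $J - T \supseteq [0,l]$, provided $T$ covers the needed range $[u-l, v]$ appropriately.

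In case $k=2$, take $J = [2l-2r, 3l+2r] \subseteq 5X$. For $T$, use $4X$, which contains the long interval $[2l-2r, 2l+2r]$ and also the points $0, l, 2l, 3l, 4l$ (from $0,l \in X$). So we need $n + t \in J$ for some $t \in T$. Taking $t$ in $[2l-2r, 2l+2r] \cup \{l, 2l, 3l\}$ should cover every $n \in [0,l]$ once $r \gtrsim l/5$, since the gaps are then bridged. This gives $5X - 4X \supseteq [0,l]$, well within $9X - 8X$.

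In case $k=1$, $r$ may be negative, and this is the \textbf{main obstacle}. Here we bootstrap. First, the interval $I_0 = [l-r, l+r] \subseteq 2X$ is nonempty exactly when $r \geq 0$. If $r < 0$, i.e.\ $|X| < (l+2)/2$, we instead reapply Lemma \ref{lev} to $X' = 2X$ or $3X$, viewed inside $[0,2l]$ or $[0,3l]$. By Freiman's $3k-3$-type growth (or Lev's bound $|2X| \geq \min(3|X|-3, |X|+l)$), $|2X| \geq \min(3|X|-3, l+|X|) > 6l/5$, which exceeds half of $2l+1$. Now the situation is the density-above-$1/2$ one: we can invoke Lemma \ref{egmcover}-like reasoning, or Lemma \ref{lev} with $k=1$ and positive $r$, for $2X \subseteq [0,2l]$. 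This yields $4X \supseteq$ an interval of length about $2\cdot(2|2X| - 2l) \gtrsim 4l/5$ around $2l$, and $6X$ containing an interval around $[2l, 4l]$ of length more than $2l$.

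Finally, subtract translates $t \in \{0, l, 2l, \dots\} \subseteq h'X$, together with the interval just obtained, to cover $[0,l]$. We need to track that the total number of summands stays within $9$ positive and $8$ negative copies. The bookkeeping of these indices, and checking that the threshold $2/5$ together with $|X| \geq 100$ absorbs the $O(1)$ losses, is what I expect to be delicate.
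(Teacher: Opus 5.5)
There is a gap, but only in your $k=1$ branch. As written it is not proved: the bootstrap through $2X$ and Freiman's $3k-3$ theorem stops with the summand count unresolved. The obstacle that motivated it also does not exist. Lemma~\ref{lev} with parameter $k$ may only be used when $k\le (l-1)/(|X|-2)\le k+1$. The right-hand inequality is equivalent to $r=(k+1)(|X|-2)-(l-2)\ge 1$.

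For $k=1$ this means $|X|\ge (l+3)/2$, so $r=2|X|-l-2\ge 1$. Your estimate $r>-l/5-O(1)$ used only $|X|>2(l+1)/5$ and dropped this constraint. The range $|X|<(l+2)/2$ where you found $r<0$ has $(l-1)/(|X|-2)>2$, so it belongs to $k=2$. That value is admissible exactly when $(l+5)/3\le |X|\le (l+3)/2$. Since $l\ge |X|-1\ge 99$, this covers all of $2(l+1)/5<|X|\le (l+3)/2$, a case you have already done.

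In the genuine $k=1$ range, $3X\supseteq [l-r,2l+r]\supseteq [l,2l]$ and $l\in X$. This gives $[0,l]\subseteq 3X-X\subseteq 9X-8X$ immediately. Alternatively, $|X|>|P|/2$ there and Lemma~\ref{egmcover} applies, which is what the paper does. Replace the bootstrap by this one line and your proof is complete.

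Your $k=2$ branch is correct, and it uses Lemma~\ref{lev} differently from, and more economically than, the paper. The paper takes only the interval $I=[2l-2r,2l+2r]\subseteq 4X$ and forms $2I-2I\supseteq[-8r,8r]$ inside $8X-8X$. This requires $8r>l$, which is where the paper's density hypothesis is really used (via $r>l/5-4$), and it spends eight negative copies.

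You take the longer interval $[2l-2r,3l+2r]\subseteq 5X$, and then no gap-bridging is needed. The single shift $t=2l\in 2X$ works for every $n\in[0,l]$, since $[2l,3l]\subseteq[2l-2r,3l+2r]$ as soon as $r\ge 0$. This gives $[0,l]\subseteq 5X-2X\subseteq 9X-8X$, using $0\in X$.

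More generally, your idea shows $[0,l]\subseteq (2k+1)X-kX$ whenever Lemma~\ref{lev} applies with parameter $k$. So in your route the density lower bound only serves to keep $k$ small, whereas the paper's $2I-2I$ trick needs $r$ to be a fixed proportion of $l$.
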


\begin{proof}
Suppose $P= u + v \cdot \{0,1,2,\dots, l\}$ with $u \in \mathbb{Z}$ and $v \in \mathbb{N}$. Then 
\[ P \subseteq X  + 8X - 8X \ \ \Leftrightarrow \ \ \frac{1}{v} \cdot (P - u) \subseteq \frac{1}{v} \cdot (X-u) + \left(8 \left(\frac{1}{v} \cdot( X-u) \right) - 8 \left( \frac{1}{v} \cdot(X-u) \right) \right) . \]
Here, and in the sequel, we define $\lambda \cdot X = \{ \lambda x : x \in X\}$ for any $\lambda \in \mathbb{R}$ and any subset $X$ of some vector space over $\mathbb{R}$.     Thus, we may assume that $P = \{0,1,\dots, l\}$ for some $l \in \mathbb{N}$ satisfying $l \geq |X| \geq 100$, and that $0, l \in X$. If $|X| > |P|/2$, we can directly use Lemma \ref{egmcover} to get that $P \subseteq 5X - 4X \subseteq 9X- 8X$. Thus, we can assume that $|X|/|P| \leq 1/2$. Since 
    \[ 2/5  < |X|/|P| \leq 1/2,\]
    we can then apply Lemma \ref{lev} with $k = 2$ and $r \in \mathbb{Z}$ satisfying
    \[
   \frac{l}{5} - 4 < r = 3|X| - l - 4 \leq \frac{l}{2}-4. 
\]
We thus deduce that $4X$ contains an interval $I$ of length $4r+1$. 
 This implies that $8X - 8X$ contains $2I-2I$ which itself contains a symmetric interval  $[-8r,8r]\cap \mathbb{Z}$. Note that $8r> 8l/5 - 4 > l$, and so $2I-2I$ contains $\{0,1,\dots, l\}$. We conclude our proof by noting that $0 \in X$, and so, we have $X + 8X - 8X \supseteq 8X-8X \supseteq P$.    
    \end{proof}

We now present our second Bogolyubov type result.

\begin{lemma} \label{2dimpullback}
    Let $c \in (0,1/10)$ be a real number,  let $Q$ be a proper $2$--dimensional progression, and let $X \subseteq Q$ be a finite, non-empty set such that $|X| \geq 100$. Then if $|X| \geq |Q|(1 - c)$, then $Q \subseteq 41X - 40X$. 
\end{lemma}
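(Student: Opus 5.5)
Normalise first. Since $Q$ is proper, the map $(l_1,l_2)\mapsto a_0+l_1a_1+l_2a_2$ is a bijection from the box $[L_1]\times[L_2]$ onto $Q$, and a Freiman isomorphism of all orders on sums of boundedly many elements that land back in $Q$. It therefore suffices to work in $B=\{0,\dots,L_1-1\}\times\{0,\dots,L_2-1\}\subseteq\ZZ^2$. Given $X\subseteq B$ with $|X|\geq(1-c)|B|$, I would show $B\subseteq 41X-40X$ in $\ZZ^2$ and push this forward by the affine map: $41X-40X$ has $41-40=1$ net copies, so the translation $a_0$ is respected. I may assume $L_1\geq L_2$.

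The plan is to pass through ``rows'' and ``columns''. For each $j$ let $X_j=\{l:(l,j)\in X\}$ be the $j$-th row fibre. By Markov, at least a proportion $1-2c\geq 4/5$ of the rows have $|X_j|\geq L_1/2\cdot(\ldots)$; more precisely, call a row \emph{good} if $|X_j|>L_1/2$. Since $|X|\geq(1-c)L_1L_2$, the number of bad rows is at most $2cL_2<L_2/5$. The set $J$ of good row indices has density $>4/5>1/2$ in $\{0,\dots,L_2-1\}$.

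The key steps, in order, are as follows.

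First, for each good row $j$ with $L_1\geq 12$, Lemma \ref{egmcover} gives $5X_j-4X_j\supseteq\{0,\dots,L_1-1\}$. Hence $5X-4X$ contains the full row $\{0,\dots,L_1-1\}\times\{j\}$, because a combination of elements of row $j$ has second coordinate $5j-4j=j$. Thus $5X-4X\supseteq [0,L_1)\times J$.

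Second, apply Lemma \ref{egmcover} to $J$ inside $\{0,\dots,L_2-1\}$ (if $L_2\geq 12$). This gives $5J-4J=\{0,\dots,L_2-1\}$. Then
\[5(5X-4X)-4(5X-4X)\supseteq 5R-4R,\qquad R=[0,L_1)\times J.\]
The first coordinate of $5R-4R$ ranges over $5[0,L_1)-4[0,L_1)$, which contains $[0,L_1)$: take one full-row element carrying the desired coordinate and let the remaining eight cancel, e.g.\ all be $0$. This yields $B\subseteq 41X-40X$, since $25+16=41$ and $20+20=40$, exactly matching the statement. That match strongly suggests this is the intended route.

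The remaining work is the degenerate cases. If $L_2<12$ (or $L_1<12$), Lemma \ref{egmcover} is unavailable in that direction. But then $c<1/10$ and $|X|\geq100$ force strong density in the short direction: a column fibre can miss at most $L_2c\cdot(\ldots)$ points on average. I would instead use that a proportion $>1-L_2c$ of columns are full when $L_2<12$, perhaps replacing $c<1/10$ by a Markov argument showing many columns are full (when $cL_2<1$-type counting works), or else swap roles. Since $|X|\geq 100$ and $L_2\leq 11$, we have $L_1\geq 10$. Handling $L_1$ between $10$ and $11$ and $L_2$ small needs a small direct check, or Lemma \ref{lev}-style reasoning with $0$ in the fibre. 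I expect these small-side-length edge cases to be the main nuisance rather than the main argument. The first thing I would try is to show that some column or row containing $0$ relative to a translate is complete, and use that a complete short progression $Y$ already satisfies $Y\subseteq 5Y-4Y$ trivially.
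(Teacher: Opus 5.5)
Your argument is essentially the paper's proof. Both run a popularity count over the fibres in one direction, apply Lemma \ref{egmcover} on each popular fibre to place whole fibres in $5X-4X$, then apply Lemma \ref{egmcover} again in the transverse direction, giving $5(5X-4X)-4(5X-4X)=41X-40X$. The only difference is bookkeeping in the second step: the paper applies the lemma to each transverse fibre of $5X-4X$, which amounts to your application to $J$ followed by the product lift.

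The paper never checks the hypothesis $|P|\geq 12$ of Lemma \ref{egmcover}, so the small-side cases you flag are an omission there too. They close easily, and you should state the fix rather than leave it as a sketch.
\begin{itemize}
\item Run the first step along the longer side.
\item Use the observation that if $Y\subseteq\{0,\dots,L-1\}$ contains two consecutive integers and every point of $\{0,\dots,L-1\}$ is within distance $4$ of $Y$, then $5Y-4Y\supseteq Y+\{-4,\dots,4\}\supseteq\{0,\dots,L-1\}$.
\item When the shorter side has length between $2$ and $11$, this applies to the index set $J$: its density is $>4/5$, so it misses at most two points.
\item In the residual case $L_1,L_2\in\{10,11\}$, define a good fibre as one with ``at most two missing points''. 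At most $12$ points of $Q$ are missing, so at most four fibres are bad and $|J|\geq 7$; hence $J$ still contains two consecutive integers and is within distance $4$ of every index, and the observation applies to $J$ as well.
\end{itemize}
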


\begin{proof}
We may write $Q = u + v_1 \cdot \{0,1,\dots,l_1\} + v_2 \cdot \{0,1,\dots, l_2\}$ for some $l_1, l_2 \in \mathbb{N}$. As in the previous lemma, we can perform an affine translation to assume that $u=0$. For any $0 \leq i \leq l_1$, we define
    \[ Q_i =  i v_1 + v_2 \cdot \{0,1,\dots, l_2\} \ \ \text{and} \ \ X_i = X \cap Q_i. \]
    Writing 
    \[ I_1 = \{0 \leq i \leq l_2 : |X_i|/|Q_i| > 1/2 \}, \]
    a standard popularity-type argument gives us
    \[ \frac{(l_2 +1)( l_1 + 1 - |I_1|)}{2}  + (l_2 + 1)|I_1| \geq \sum_{i=0}^{l_1} |X_i| = |X| \geq |Q|(1-c). \]
    Since $Q$ is proper, we get that $|Q| = (l_1+1)(l_2 + 1)$, and so, the preceding expression gives us
    \begin{equation} \label{marsfortherich}
    |I_1| \geq (l_1 + 1)(1 - 2 c). 
    \end{equation}
    For each $i \in I_1$, we may apply Lemma \ref{egmcover} to deduce that $Q_i \subseteq 5X_i - 4X_i$.

    Now let $X' = 5X - 4X$, and for every $0 \leq j \leq l_2$, let
    \[ Q_j' =  v_1 \cdot \{0,1,\dots, l_1\} + jv_2. \]
    Noting \eqref{marsfortherich}, we see that for any $0 \leq j \leq l_2$, we have that 
    \[ |X' \cap Q_j'|/|Q_j'| \geq |I_1|/(l_1+1) \geq  1 - 2 c. \]
    Thus, we may now apply Lemma \ref{egmcover} to deduce that for any $0 \leq j \leq l_2$, one has 
     \[ Q_j' \subseteq 5X' - 4X' = 41X - 40X  .\]
    This implies that $Q = \bigcup_{j=0}^{l_2} Q_j' \subseteq 41X - 40X$.
\end{proof}

In order to deduce Theorem \ref{main} from Theorem \ref{m1}, we will need one further ingredient. In order to state this, we require the following auxiliary definition. Given $k \in \mathbb{N}$, we say that two finite, non-empty  sets $A, B \subseteq \mathbb{Z}$ are \emph{Freiman $k$--isomorphic} if there exists some bijection $\varphi : A \to B$ such that for all $a_1, \dots, a_{2k} \in A$, we have
\[ a_1 + \dots + a_k = a_{k+1} + \dots + a_{2k} \ \ \text{if and only if} \ \ \varphi(a_1) + \dots + \varphi(a_k) = \varphi(a_{k+1}) + \dots + \varphi(a_{2k}) \]
It is easy to see that if $A$ and $B$ are Freiman $k$--isomorphic for some $k \in \mathbb{N}$, then in fact they are Freiman $l$--isomorphic for any integer $l \in [k]$. Moreover, one has that if $A$ and $B$ are Freiman $k$--isomorphic and $l,m$ are integers such that $k > l+m$, then the set $lA - mA$ is Freiman $r$--isomorphic to $lB - mB$ for any integer $1\leq r \leq k/(l+m)$. We refer the reader to \cite[\S5.3]{TV2006} for more details concerning Freiman isomorphisms.

With this in hand, we now record the following rectification type result due to Green--Ruzsa \cite{GR2006}. 

\begin{lemma} \label{greenruzsa}
    Let $k  \in \mathbb{N}$, let $K \geq 1$ and let $A$ be a finite, non-empty set of integers such that $|A+A| \leq K|A|$. Then $A$ is Freiman $k$-isomorphic to some subset of the interval $[(16kK)^{12K^2}|A|]$.
\end{lemma}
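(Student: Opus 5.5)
This is the rectification principle of Green--Ruzsa \cite{GR2006}, and I would prove it along their lines: compress $A$ into a short interval by a ``dilate and reduce'' map that keeps every additive relation of length at most $k$. The argument has three steps.

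\textbf{Step 1: reduce to a cyclic group.} By translation we may assume $A \subseteq [0, D]$ for some $D$. Fix a prime $p > 4kD$. Then the reduction $A \to \mathbb{Z}/p\mathbb{Z}$ is a Freiman $2k$-isomorphism onto its image. The reason is that every element of $kA - kA$ lies in $[-kD, kD]$, which injects into $\mathbb{Z}/p\mathbb{Z}$. All doubling information is preserved. In particular, Pl\"unnecke--Ruzsa gives
\[ |lA - mA| \leq K^{l+m}|A| \quad \text{for all } l,m \leq 2k. \]

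\textbf{Step 2: dilate, lift and restrict.} For $\lambda \in (\mathbb{Z}/p\mathbb{Z})^\times$, dilation by $\lambda$ is an isomorphism of $\mathbb{Z}/p\mathbb{Z}$. Suppose $\lambda \cdot A$ lies in an arc $J$ of length at most $p/(2k)$. Lifting $J$ to an interval of $\mathbb{Z}$ then gives a Freiman $k$-isomorphism of $A$ onto a subset of an interval of length $|J|$. Indeed, sums of $k$ lifted elements lie in an interval of length less than $p/2$, so congruence mod $p$ forces equality in $\mathbb{Z}$.

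The dilation alone cannot make $|J| \ll |A|$, since $p$ is huge. So after dilating I would also compose with the projection $\mathbb{Z}/p\mathbb{Z} \to \mathbb{Z}/N\mathbb{Z}$ obtained by rounding $x \mapsto \lfloor Nx/p \rfloor$. Here $N$ is of order $(16kK)^{12K^2}|A|$, and the $k$-isomorphism property is then checked with a small carry error. This rounding map is injective on $kA - kA$ provided no nonzero $d \in kA - kA$ has $\lambda d$ in the short arc $[-p/N, p/N]$. There are at most $K^{2k}|A|$ such $d$. For each $d$, the proportion of bad $\lambda$ is $\approx 2/N$. So a union bound produces a good $\lambda$ as soon as $N \gg K^{2k}|A|$. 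This yields a Freiman $k$-isomorphism of $A$ into $\mathbb{Z}/N\mathbb{Z}$ with $N$ of the right shape.

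\textbf{Step 3 (main obstacle): from $\mathbb{Z}/N\mathbb{Z}$ to a genuine interval.} The previous step only gives an isomorphic copy inside $\mathbb{Z}/N\mathbb{Z}$. To land inside an interval of $\mathbb{Z}$, I need a further dilation $\mu$ of $\mathbb{Z}/N\mathbb{Z}$ that pushes the whole image into an arc of length $\leq N/(2k)$, without losing control of its length.

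I would get $\mu$ from structure rather than randomness. The set $A$ has Freiman dimension $O(K)$, by Freiman's lemma. Hence, after a first compression, $A$ is $k$-isomorphic to a subset of a box spanned by at most $O(K)$ generators with controlled side lengths. A simultaneous Dirichlet approximation in these $O(K)$ generators then finds $\mu$ with each $\mu\cdot$generator small. Each application multiplies the relevant length by a factor of about $(16kK)^{O(K)}$. Iterating over the $O(K)$ dimensions produces the exponent $12K^2$. Making this iteration precise, so that the bound is $(16kK)^{12K^2}|A|$ rather than something worse, is the step I expect to require the most care.
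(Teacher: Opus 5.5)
The paper gives no proof of this lemma; it is quoted from Green--Ruzsa \cite{GR2006}. Judged on its own, your Step 1 and the first half of Step 2 are fine, but the rounding step fails. The map $x\mapsto\lfloor Nx/p\rfloor$ is not additive. If the lifts satisfy $y_1+\dots+y_k=y_{k+1}+\dots+y_{2k}$, the rounded sums differ by $\sum_{j>k}\{Ny_j/p\}-\sum_{j\le k}\{Ny_j/p\}$. This is an integer of absolute value up to $k-1$, and it is typically nonzero. A Freiman $k$-isomorphism must preserve every relation exactly, so there is no ``small carry error'' to check. Your union bound over $d\in kA-kA$ only rules out \emph{new} relations; it does nothing to preserve the old ones.

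If you remove the rounding (lift $\lambda\cdot A$ to $\{0,\dots,p-1\}$ and reduce mod $N$), you do get a Freiman homomorphism. Because of wrap-around, however, this works only on the part of $\lambda\cdot A$ inside a single arc of length $p/k$. That is Ruzsa's original modelling lemma, which models only some $A'\subseteq A$ with $|A'|\ge|A|/k$. Modelling all of $A$ is exactly what Green--Ruzsa add. A random dilation spreads $\lambda\cdot A$ around the whole circle, so it cannot supply this. Separately, $|kA-kA|\le K^{2k}|A|$ is exponential in $k$, whereas the target bound is polynomial in $k$.

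Step 3 then asserts, without argument, the entire content of the lemma. Freiman's lemma bounds the dimension of a full-dimensional $k$-isomorphic copy of $A$ in some $\mathbb{Z}^D$ by $D=O(K)$. This is legitimate, since a $k$-isomorphism preserves $|A+A|$. But Freiman's lemma says nothing about how large that copy is.

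Conversely, suppose you knew that $A$ is $k$-isomorphic to a subset of a box $B=\prod_{i\le D}\{0,\dots,L_i-1\}\subseteq\mathbb{Z}^D$ with $\prod_i L_i\ll_{k,K}|A|$. Then you would be finished at once. The homomorphism $\vec{l}\mapsto\sum_i l_i\prod_{j<i}(2kL_j)$ is injective on $kB-kB$. It is therefore a Freiman $k$-isomorphism of $B$ onto a subset of an interval of length at most $(2k)^D\prod_i L_i$. No reduction mod $p$ and no Dirichlet dilation is needed.

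So the missing ingredient is precisely a bound on the size of the full-dimensional model in terms of $|A|$ and $K$. Such a bound is essentially the statement of the lemma itself, since an interval is a one-dimensional box. Obtaining it requires a genuine volume or covering argument beyond Freiman's lemma. Your iteration, ``multiplying lengths by $(16kK)^{O(K)}$ per dimension'', is reverse-engineered from the target exponent $12K^2$ rather than derived.
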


We will now combine these results to present the proof of Theorem \ref{main}.

\begin{proof}[Proof of Theorem \ref{main}]
Let $\eps''>0$ be a sufficiently small number to be chosen later, let $\delta>0$ be sufficiently small in terms of $\eps''$.
Let $A \subseteq \mathbb{Z}$ be a finite, non-empty set satisfying $|A+A| \leq (4+ \delta)|A|$. Applying Lemma  \ref{greenruzsa}, we find that $A$ is Freiman $2000$--isomorphic to a set $A' \subseteq \mathbb{Z}/N\mathbb{Z}$ for some $N \ll |A| = |A'|$. Let this Freiman $2000$--isomorphism be defined by the map $\varphi : A \to A'$. Note that Freiman isomorphisms preserve the size of sumsets and so, we have that $|A'+A'| \leq (4+ \delta)|A'|$. 

Now fix $\varepsilon>0$  which is sufficiently small in terms of $\delta>0$. Applying Theorem \ref{m1}, we find that either there exists some $1$--dimensional progression $P' \subseteq [N]$ such that 
\[ |P'| \gg_{\varepsilon} N \gg_{\varepsilon} |A'| \ \ \text{and} \ \ |A'\cap P'|/|P'| \geq 1/2 - O(\varepsilon'')\]
or there exists a proper $2$--dimensional progression $Q' \subseteq [N]$ with 
\[ |Q'| \gg_{\varepsilon} N  \gg_{\eps} |A'| \ \ \text{such that}  \ \ |A' \cap Q'|/|Q'| \geq 1 - O(\varepsilon''). \]

Suppose we are in the first situation. Note that by passing to a sub--progression, we may further assume that $P'$ is the smallest sub--progression containing $A'$. We may now apply Lemma \ref{1dimpullback} to deduce that $P' \subseteq 9A'-8A'$. On the other hand, since $A$ and $A'$ are Freiman $2000$--isomorphic, we get that $9A' - 8A'$ is Freiman $100$--isomorphic to $9A - 8A$. Let this Freiman $100$--isomorphism be given by the map $\varphi_1 : 9A'- 8A' \to 9A - 8A$. Moreover, since Freiman $4$--isomorphisms preserve proper generalised arithmetic progressions, see \cite[Proposition 5.24]{TV2006}, we deduce that the set $P = \varphi_1(P')$ is a $1$--dimensional progression such that 
\[ |P| \gg_{\varepsilon} |A| \ \ \text{and} \ \ |A\cap P|/|P| \geq 1/2 - O(\varepsilon''). \]

Similarly, if we are in the second situation, then we can apply Lemma \ref{2dimpullback} to deduce that $Q' \subseteq 41A'-40A'$. As before, by noting that $41A'-40A'$ is Freiman $10$--isomorphic to $41 A - 40 A$, we may pull back this $2$-dimensional progression under this Freiman $10$--isomorphism to obtain a proper $2$--dimensional progression $Q$ such that
\[ |Q| \gg_{\varepsilon} |A| \ \ \text{and} \ \ |A \cap Q|/|Q| \geq 1 - O(\varepsilon''). \]
Letting $\eps'' = C_1 \eps'$ for some appropriate constant $C_1>0$  then finishes the proof of Theorem \ref{main}.
\end{proof}

As mentioned in the introduction, we see that the above proof actually yields a Bogolyubov--Ruzsa type lemma. Indeed, under the hypothesis of Theorem \ref{main}, we see that either $9A - 8A$ contains the $1$-dimensional progression $\varphi_1(P)$ with $|\varphi_1(P)| \gg_{\varepsilon}|A|$, or $41A - 40A$ contains a $2$-dimensional progression with size $\gg_{\varepsilon} |A|$, where $\varepsilon$ is some fixed real number which is sufficiently small in terms of $\delta$.

Our main goal now is to prove Theorem \ref{m1}. In order to maintain an appropriate analogy with Question \ref{grprob} as well as the work of Eberhard--Green--Manners \cite{EGM2014}, we will present our proof of Theorem \ref{m1} in the case when we analyse the difference set $A-A$. The proof in the case of the sumset $A+A$ also follows in a very similar fashion.


\section{Preliminary manoeuvres with the Regularity lemma} \label{reglemmasteup}

In this section, we closely follow the arguments of \cite[\S4]{EGM2014}. We begin by providing some further notation, and so, given a function $f: [N] \to \mathbb{C}$, we define
\[ \n{f}_{{\ell}^2} = (N^{-1}\sum_{n \in [N]} |f(n)|^2 )^{1/2}.\]
Writing $E(N)$ to count all quadruples $(a,b,c,d) \in [N]^4$ such that $a+d = b+c$, we also define
\[ \n{f}_{U^2}^4 = E(N)^{-1} \sum_{\substack{a,b,c,d \in [N], \\
a+ d = b+c}} f(a) \overline{f(b) f(c)} f(d). \]
The reader can verify that this definition is equivalent to  \cite[Definition A.7]{EGM2014}. In fact, one can check that for any prime $200N \leq p < 400N$, if one views $f$ as a function from $\mathbb{Z}/p \mathbb{Z}$ to $\mathbb{C}$ with support in $[N] \ ({\rm mod} \ p)$, then 
\begin{equation} \label{equivdefuniform}
\n{f}_{U^2}^4 \ll \sum_{r=1}^4 |\hat{f}(r)|^4 <  \n{f}_{U^2}^4  
\end{equation}
where $\hat{f}(r) = p^{-1} \sum_{x \in \mathbb{Z}/p \mathbb{Z}} f(x) e(rx/p)$ denotes the Fourier transform of $f$ at frequency $r$.

Given a vector $\vec{x} \in \mathbb{R}^d$ and $p \geq 1$, we define
\[ \n{\vec{x}}_p = (|x_1|^p + \dots + |x_d|^p )^{1/p} \ \ \text{and} \ \ \n{\vec{x}}_{\infty} = \max_{1 \leq i \leq d} |x_i|. \]
An element $\vec{\theta}\in \TT^d$ is \emph{$(A,N)$-irrational} if for every nonzero $\vec{m} = (m_1, \dots, m_d) \in \ZZ^d$ with $\n{\vec{m}}_1 \le A$ one has
\[
\|\mathbf m \cdot \vec{\theta}\|_{\TT}\ge \frac{A}{N},
\]
where $\|x\|_{\TT}$ denotes distance from $x\in \TT$ to the nearest integer. 

Now, let $\vec{x} = (a, y_0, \vec{y})$ and $\vec{x}' = (a', y_0', \vec{y}')$ be elements in the space $\mathbb{Z}/q\mathbb{Z} \times [0,1] \times \mathbb{T}^d$. We define a metric $d$ on this space by setting
\[ d(\vec{x}, \vec{x}') = \max\{ |a-a'|_q, |y_0 - y_0'|, \n{ \vec{y} - \vec{y'}} \}  ,\]
where 
\[ |a-a'|_q = \inf_{\substack{ n_1,n_2 \in \mathbb{Z},\\ n_1 \equiv a \ ({\rm mod} \ q),\\ n_2 \equiv a' \ ({\rm mod} \ q)  }} |n_1 - n_2| \ \ \text{and} \ \ \n{\vec{y} - \vec{y'}} = \inf_{\vec{m}, \vec{m}' \in \mathbb{Z}^2} \n{\vec{y} + \vec{m} - \vec{y}' - \vec{m}'}_2 .\] 
Moreover, given $M >0$, we denote a function $F: \mathbb{Z}/q\mathbb{Z} \times [0,1] \times \mathbb{T}^d \to \mathbb{R}$ to be  \emph{$M$-Lipschitz} if for all $\vec{x}, \vec{x}' \in \mathbb{T}^d$, one has
\[ |F(\vec{x}) - F(\vec{x}')| \leq M d(\vec{x}, \vec{x}'). \]
One can similarly define an $M$-Lipschitz function on the space $[0,1]\times \TT^d$ or $\TT^d$.

\subsection{A hierarchy of parameters} \label{hp1} 

Recall the hypothesis of Theorem \ref{m1}. Thus, we have three main parameters: $\varepsilon' > \delta > \varepsilon >0$, each of these being sufficiently small in terms of the previous number. Moreover $\varepsilon'$ itself is much smaller than $\alpha$, where $\alpha = |A|/N$, which is some fixed constant in $(0,1]$. 

There will also be a rapidly increasing growth function $\cF : (0, \infty) \to (0, \infty)$ which will grow sufficiently fast in terms of $\varepsilon', \delta,\varepsilon$. Thus, for instance, we can always assume that $\cF(1)$ is sufficiently large in terms of  $\varepsilon', \delta,\varepsilon$.

Throughout this paper, we will define a variety of auxiliary parameters which will depend on $\varepsilon', \delta,\varepsilon$ in reasonable ways. In particular, we will choose
\begin{equation} \label{cot}
\eta = \delta^{1000} \ \ \text{and} \ \  c= 1/2000 \ \ \text{and} \ \ \lambda = \varepsilon^2. 
\end{equation}
Our choice of these parameters implies that $\eta^c = \delta^{1/2}$, which itself will be another tertiary parameter that will come into play in later sections.

\subsection{Arithmetic regularity lemma}

We will need the following straightforward consequence of the abelian arithmetic regularity lemma, see \cite[(4.4)]{EGM2014}.

\begin{lemma} \label{arithlem}
Given parameters as described in \S\ref{hp1} and some set $A \subseteq [N]$ with $|A| = \alpha N$, there exists an integer 
\[ \varepsilon^{-10} \leq M \ll_{\varepsilon, \cF} 1,\]
some integers $1 \leq q,d \leq M$, some $M$-Lipschitz function 
\[ F: \mathbb{Z}/q \mathbb{Z} \times [0,1] \times  \TT^d \to [0,1],\]
some $(\cF(M), N)$-irrational $\vec{\theta} \in \TT^d$ such that 
\begin{equation} \label{arl1}
    \1_A = f_{\rm struct} + f_{\rm sml} + f_{\rm unf},
\end{equation} 
where $f_{\rm struct} : [N] \to [0,1]$ and $f_{\rm unf} : [N] \to [-1,1]$ and $f_{\rm sml}:[N] \to [-2,2]$ are functions satisfying
\begin{equation}  \label{arl2}
f_{\rm struct} = \sum_{a \ ({\rm mod} \ q) } \sum_{i =1}^M  \1_{n \in I_{a,i}} F_{a,i}( n \vec{\theta}) , \ \ \text{and} \  \ \n{f_{\rm sml}}_{{\ell}^2} \leq 2\varepsilon^{10}, \  \ \text{and} \  \  \n{f_{\rm unf}}_{U^2} \leq 1/\cF(M), 
\end{equation}
and 
\[ I_{a,i} = \{ n \in [N] : n/N \in ( (i-1)/M, i/M] \ \text{and} \ n \equiv a \ ( {\rm mod} \ q) \} \]
and $F_{a,i} (n \vec{\theta}) = F(a,i/M, n \vec{\theta})$ for every $a \in \mathbb{Z}/q\mathbb{Z}$ and $i \in [M]$ and $n \in [N]$.
\end{lemma}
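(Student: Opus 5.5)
This is the abelian arithmetic regularity lemma of Green--Tao in the form used in \cite[\S4]{EGM2014}, so the plan is to invoke that theorem and check that its output has the stated shape. I would apply it (see Appendix \ref{arithmeticregularityappendix}) to $f = \1_A : [N] \to [0,1]$ with error parameter $\varepsilon^{10}$ and growth function $\cF$. Here $\cF$ is replaced by a still faster-growing function, chosen so that the uniform part is controlled in $U^2$ by $1/\cF(M)$ with $M$ the final complexity. This yields a complexity bound $M_0 \ll_{\varepsilon,\cF} 1$, a decomposition $\1_A = f_{\rm nil} + f_{\rm sml} + f_{\rm unf}$, and the following data:
\begin{itemize}
\item $f_{\rm nil}(n) = \tilde F(n \bmod q, n/N, n\vec{\theta})$ for a degree-$1$ nilsequence of complexity at most $M_0$, so the nilmanifold is the torus $\TT^d$ with $d \le M_0$;
\item $\tilde F$ is $M_0$-Lipschitz on $\mathbb{Z}/q\mathbb{Z}\times[0,1]\times\TT^d$, with $q \le M_0$;
\item $\vec{\theta}$ is $(\cF(M_0),N)$-irrational;
\item $\n{f_{\rm sml}}_{\ell^2}\le\varepsilon^{10}$ and $\n{f_{\rm unf}}_{U^2}\le 1/\cF(M_0)$.
\end{itemize}
The standard consequences of the lemma also let us take $\tilde F$ with values in $[0,1]$, $f_{\rm unf}$ with values in $[-1,1]$, and $f_{\rm sml}$ with values in $[-2,2]$ (after truncation). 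The $U^2$ normalisation here agrees with \cite[Definition A.7]{EGM2014} up to constants, by \eqref{equivdefuniform}, so the bound transfers with constants absorbed into $\cF$.

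The main step is the discretisation in the $[0,1]$ coordinate. Set $M = \max(M_0, \lceil \varepsilon^{-10}\rceil)$. Then $M$ bounds $q$, $d$ and the Lipschitz constant, and $M \ge \varepsilon^{-10}$. Define $F = \tilde F$ and $f_{\rm struct}(n) = F(a, i/M, n\vec{\theta})$ for $n \in I_{a,i}$. The sets $I_{a,i}$ partition $[N]$, so this is the displayed sum in \eqref{arl2}. For $n \in I_{a,i}$ we have $|n/N - i/M| \le 1/M$, so the Lipschitz property gives
\[ |f_{\rm nil}(n) - f_{\rm struct}(n)| \le M_0/M. \]
This is not automatically small when $M = M_0$. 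To fix it, I would rescale by taking the final $M$ to be about $M_0\varepsilon^{-10}$. The function $F$ is still $M$-Lipschitz since $M \ge M_0$, and $q, d \le M$ still hold. The pointwise error is then at most $\varepsilon^{10}$.

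I would absorb this error into the small part, setting $f_{\rm sml}' = f_{\rm sml} + (f_{\rm nil} - f_{\rm struct})$. This gives $\n{f_{\rm sml}'}_{\ell^2}\le 2\varepsilon^{10}$. The values of $f_{\rm sml}'$ stay in $[-2,2]$ after the usual truncation adjustment: $f_{\rm struct}\in[0,1]$ and $f_{\rm unf}\in[-1,1]$, so $f_{\rm sml}' = \1_A - f_{\rm struct} - f_{\rm unf}$ lies in $[-2,2]$.

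The remaining requirement is the irrationality of $\vec{\theta}$ with respect to the new $M$. Since $M \le M_0\varepsilon^{-10}$, we need $(\cF(M),N)$-irrationality, which follows if the regularity lemma was run with $\cF'(m) = \cF(m\varepsilon^{-10})$ in place of $\cF$. The same substitution gives $\n{f_{\rm unf}}_{U^2}\le 1/\cF(M)$. I expect this bookkeeping of growth functions to be the only real obstacle: the rescaled $M$ must make both the irrationality and the uniformity bound hold simultaneously. The remaining range condition, $\varepsilon^{-10}\le M\ll_{\varepsilon,\cF}1$, holds by construction.
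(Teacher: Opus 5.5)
Your proposal is correct and takes essentially the paper's route. The paper gives no separate argument: it defers to \cite[(4.4)]{EGM2014}, which is exactly the abelian arithmetic regularity lemma (Lemma \ref{abelianarithmetic}) run with error $\varepsilon^{10}$, followed by discretising the $[0,1]$ coordinate at scale $1/M$ with $M \approx M_0\varepsilon^{-10}$ and absorbing the pointwise error of at most $\varepsilon^{10}$ into $f_{\rm sml}$; this is where the $2\varepsilon^{10}$ and the range $[-2,2]$ come from.

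The one thing to tidy is rounding. Take $M=\lceil M_0\varepsilon^{-10}\rceil$ and run the lemma with $\cF'(m)=\cF(\lceil m\varepsilon^{-10}\rceil)$, so that $M_0/M\le\varepsilon^{10}$ and $\cF'(M_0)=\cF(M)$ both hold. Your stated inequality $M\le M_0\varepsilon^{-10}$ is false for the ceiling, so $\cF(m\varepsilon^{-10})$ without the ceiling would not guarantee $\cF'(M_0)\ge\cF(M)$.
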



Define $\alpha(a,i) = |A \cap I_{a,i}|/|I_{a,i}|$ for every $a \in \mathbb{Z}/q\mathbb{Z}$ and $i \in [M]$. The following is a nice consequence of the ${\ell}^2$ control in \eqref{arl2}, see  \cite[Lemma 4.3]{EGM2014}. 

\begin{lemma} \label{exceptionlemma}
There exists some $E \subseteq \mathbb{Z}/q\mathbb{Z}\times [M]$ with $|E| \leq \varepsilon^4 qM$ such that whenever $(a,i) \notin E$, then
\begin{equation} \label{l2sml}
\sum_{n \in I_{a,i}} |f_{\rm sml}(n)| \leq \varepsilon^5 |I_{a,i}| .
\end{equation}
\end{lemma}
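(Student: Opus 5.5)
The plan is a Markov/averaging argument for $f_{\rm sml}$, using the bound $\n{f_{\rm sml}}_{\ell^2} \leq 2\varepsilon^{10}$ from \eqref{arl2}. The key input is that the progressions $I_{a,i}$ essentially have equal size. Since $M \ll_{\varepsilon,\cF} 1$ and $q \leq M$, while $N$ may be taken sufficiently large in terms of $M$, we have $|I_{a,i}| = N/(qM) + O(1)$, so $|I_{a,i}| \geq N/(2qM)$ for every $(a,i)$. (If $N$ were bounded in terms of $M$, the conclusion of Theorem \ref{m1} would hold trivially, so this assumption costs nothing.) Moreover, the $I_{a,i}$ partition $[N]$ as $a$ ranges over $\mathbb{Z}/q\mathbb{Z}$ and $i$ over $[M]$.

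First, Cauchy--Schwarz gives
\[ \sum_{n \in [N]} |f_{\rm sml}(n)| \leq N^{1/2}\Big(\sum_{n\in[N]} |f_{\rm sml}(n)|^2\Big)^{1/2} = N \n{f_{\rm sml}}_{\ell^2} \leq 2\varepsilon^{10} N. \]

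Next, let $E$ be the set of pairs $(a,i)$ for which \eqref{l2sml} fails, that is, $\sum_{n \in I_{a,i}} |f_{\rm sml}(n)| > \varepsilon^5 |I_{a,i}|$. Summing over $E$ and using that the $I_{a,i}$ are disjoint, we get
\[ |E| \cdot \varepsilon^5 \frac{N}{2qM} < \sum_{(a,i)\in E} \sum_{n\in I_{a,i}} |f_{\rm sml}(n)| \leq 2\varepsilon^{10} N, \]
so $|E| \leq 4\varepsilon^5 qM \leq \varepsilon^4 qM$ once $\varepsilon \leq 1/4$.

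The only delicate point is the lower bound $|I_{a,i}| \gg N/(qM)$, which requires $N$ to be large compared with $qM$. This holds since $qM \leq M^2 \ll_{\varepsilon,\cF} 1$. Alternatively, if some $I_{a,i}$ were unbalanced, we could weight the Markov argument by $|I_{a,i}|$ and count those pairs separately, but uniform sizes make this unnecessary.
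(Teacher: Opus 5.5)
Your argument is correct. The paper gives no proof of its own and simply cites Eberhard--Green--Manners (Lemma 4.3 there), and your global Cauchy--Schwarz followed by Markov over the near-equal-size progressions $I_{a,i}$ is essentially that standard averaging argument. Your observation that $N$ may be assumed large in terms of $qM \le M^2$ is the right way to guarantee $|I_{a,i}| \ge N/(2qM)$: if $N$ is bounded, a single element of $A$ already gives conclusion \eqref{it2} of Theorem \ref{m1}.
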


This, along with various properties about functions with a small $U^2$ norm and functions that are $M$--Lipschitz and well-behaved on $\TT^d$ allow us to deduce the following, see also \cite[Lemma 4.4]{EGM2014}

\begin{lemma} \label{t1}
    For all $(a,i) \notin E$, we have
\[ \Big| \int_{\TT^d} F_{a,i}(\vec{\gamma}) d\vec{\gamma} - \alpha(a,i) \Big| \leq \varepsilon + qM/\cF(M).  \] 
\end{lemma}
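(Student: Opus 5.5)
We compute the average of $\1_A$ over $I_{a,i}$ in two ways, using the decomposition \eqref{arl1}. By definition, $\alpha(a,i) = |I_{a,i}|^{-1}\sum_{n \in I_{a,i}} \1_A(n)$. Splitting this sum according to \eqref{arl1} gives three terms. Since $(a,i)\notin E$, Lemma \ref{exceptionlemma} bounds the $f_{\rm sml}$ term by $\varepsilon^5$. The other two terms are $|I_{a,i}|^{-1}\sum_{n\in I_{a,i}} F_{a,i}(n\vec\theta)$ and $|I_{a,i}|^{-1}\sum_{n\in I_{a,i}} f_{\rm unf}(n)$. Note that on $I_{a,i}$ the structured part is exactly $F_{a,i}(n\vec\theta)$, because the progressions $I_{a,i}$ are disjoint.

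\textbf{The uniform term.} We write $\1_{I_{a,i}}$ as the product of the indicator of an interval of length about $N/M$ and the indicator of a residue class mod $q$. Expanding the residue-class indicator as $q^{-1}\sum_{r \bmod q} e(r(n-a)/q)$ gives
\[ \Big|\sum_{n\in I_{a,i}} f_{\rm unf}(n)\Big| \le \max_{\xi} \Big|\sum_{n\in J} f_{\rm unf}(n)e(\xi n)\Big| \]
for an interval $J$. We then use the standard fact that an interval sum with a linear phase is bounded by $\ll N\n{f_{\rm unf}}_{U^2}$ up to a logarithmic-free loss. To see this, embed into $\mathbb{Z}/p\mathbb{Z}$ via \eqref{equivdefuniform}, which gives $\n{\hat f_{\rm unf}}_\infty \le \n{f_{\rm unf}}_{U^2}$. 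Then bound the interval cutoff by $\sum_r |\hat{\1}_J(r)|\ll \log N$. To avoid the log, replace $\1_J$ by a smooth (trapezoidal) cutoff with $\ell^1$-Fourier norm $O(M)$, accepting an $O(1/M)$ boundary error. Since $|I_{a,i}|\gg N/(qM)$, this yields a contribution $\ll qM^{2}/\cF(M) + 1/M$. Because $M\ge\varepsilon^{-10}$ and $\cF$ grows fast, this is at most $\varepsilon/3 + qM/\cF(M)$ after absorbing the constants (or after slightly redefining $\cF$).

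\textbf{The structured term.} We must show that $|I_{a,i}|^{-1}\sum_{n\in I_{a,i}}F_{a,i}(n\vec\theta)$ is within $\varepsilon/3$ of $\int F_{a,i}$. Writing $n = a + qm$ with $m$ in an interval of length $L\approx N/(qM)$, we have $n\vec\theta = a\vec\theta + m(q\vec\theta)$. So this is an equidistribution statement for the orbit of $q\vec\theta$ along an interval. Approximate the $M$-Lipschitz function $F_{a,i}$ on $\TT^d$ by a Fejér-type trigonometric polynomial with frequencies $\n{\vec m}_1\le K$, where $K=K(M,\varepsilon)$. The error is $\ll_d M/K^{c}$, which is $\le\varepsilon/6$ for suitable $K$. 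The Fourier coefficients are bounded by $1$, and there are at most $(2K+1)^d$ of them. For each nonzero frequency $\vec m$, the geometric series gives
\[ \Big|\sum_{m} e(m\, q\vec m\cdot\vec\theta)\Big|\le \frac{1}{2\|q\vec m\cdot\vec\theta\|_{\TT}} \le \frac{N}{2\cF(M)}. \]
This uses $(\cF(M),N)$-irrationality, which applies since $\n{q\vec m}_1\le qK\le \cF(M)$ provided $\cF$ dominates $K(M,\varepsilon)M$. Dividing by $L$, each nonzero frequency contributes $\ll qM/\cF(M)$, and the zero frequency gives $\int F_{a,i}$. Summing over at most $(2K+1)^d\le (2K+1)^M$ frequencies, the total error is $\ll_{M,\varepsilon} qM/\cF(M)$. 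Since $\cF$ is a sufficiently rapidly growing function, this is absorbed into $\varepsilon/6 + qM/\cF(M)$, again by allowing $\cF$ to be replaced by a faster-growing function in Lemma \ref{arithlem}.

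Combining the three estimates gives the claim. The main obstacle is purely bookkeeping: making every loss in terms of $M,q,d,K$ get swallowed by the growth of $\cF$, which is permitted because Lemma \ref{arithlem} holds for any sufficiently fast-growing $\cF$. A secondary subtlety is the exact normalisation of $\n{\cdot}_{U^2}$, which \eqref{equivdefuniform} handles.
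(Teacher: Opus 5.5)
Your argument is correct and follows the paper's route. You split $\1_A$ on $I_{a,i}$ via \eqref{arl1}, bound the $f_{\rm sml}$ contribution by Lemma \ref{exceptionlemma}, the $f_{\rm unf}$ contribution by a $U^2$ estimate for progression sums, and the structured part by equidistribution of $n\vec\theta$; the paper simply cites Lemma \ref{gowers-progressions} and Lemma \ref{distribution-integral-a} for the last two steps, which you reprove by hand. One bookkeeping point: the bound $O(|I_{a,i}|/M)$ on the boundary error needs the trapezoid ramps to have relative length about $1/(qM)$ if you smooth after expanding the residue class (or you can smooth before expanding), and this still gives Fourier $\ell^1$-norm $O((qM)^{1/2})=O(M)$, so your stated bound holds.
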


\begin{proof}[Proof sketch]
Lemma~\ref{gowers-progressions} implies that
\[ |\sum_{n \in I_{a,i}} f_{\rm unf}(n) | < N / \cF(M). \]
Thus, the fact that
\[ \sum_{n \in I_{a,i}} F_{a,i}(n \vec{\theta}) = |A \cap I_{a,i}| - \sum_{n \in I_{a,i}} f_{\rm sml}(n) - \sum_{n \in I_{a,i}} f_{\rm unf}(n) \]
combines with the preceding inequality and \eqref{l2sml} to give us that 
\[ \sum_{n \in I_{a,i}} F_{a,i}(n \vec{\theta}) = \alpha(a,i)|I_{a,i}| + O(\varepsilon^5 |I_{a,i}|  + N/\cF(M) )\]
Moreover, Lemma~\ref{distribution-integral-a} implies that
\[ \Big||I_{a,i}|^{-1}\sum_{n \in I_{a,i}  } F_{a,i}(n \vec{\theta}) - \int F_{a,i}(\vec{\gamma}) d\vec{\gamma} \Big| < \varepsilon. \qedhere \]
\end{proof}


\subsection{Transference to $\mathbb{T}^d$}

We will now start our first transference step, where we move our problem from the setting of integers to $\TT^d$. We will then resolve a variation of our problem over $\TT^d$, and then pull either the expansion phenomenon or the structure back to the integer setting. 

 Recall that 
\[ A - A \subseteq [-N,N] = \bigcup_{\substack{1 \leq j \leq q, \\ l \in [-M,M] }} I_{j,l} . \]
Let $(a,i), (a',i') \notin E$ and let $d \in I_{a-a',i-i'}$. Noting \eqref{arl1} and \eqref{arl2}, one has
\begin{align*}
 \sum_{n \in I_{a',i'}}\1_{A_{a',i'}}(n ) \1_{A_{a,i}}(n+d)  &  = \sum_{n \in I_{a',i'}}   f_{a',i'}(n) f_{a,i} (n+d)  + \sum_{n \in I_{a',i'}} f_{\rm sml}(n) f_{a,i}(n+d) \\
 & +   \sum_{n \in I_{a',i'}} f_{\rm unf}(n) f_{a,i}(n+d) +  \mathcal{E},
\end{align*}
where $f_{a,i}(n) = F_{a,i}(n \vec{\theta})\1_{n \in I_{a,i}}$ and $\mathcal{E}$ only involves terms comprising $f_{\rm sml}$ and $f_{\rm unf}$. Since $(a,i), (a',i') \notin E$, we see that for any $g : [N] \to [-10,10]$, one has
\begin{equation} \label{l2err2}
\Big|\sum_{n \in I_{a',i'}} f_{\rm sml}(n) g(n+d) \Big|\leq 10 \sum_{n \in I_{a',i'}} |f_{\rm sml}(n)| < 10 \varepsilon^5 |I_{a,i}|,
\end{equation}
where the final inequality follows from \eqref{l2sml}. Moreover, for any such $g$, we also have
\[ \sum_{d \in [-N,N]} \Big|\sum_{n \in I_{a',i'}} f_{\rm unf}(n) g(n+d) \Big|^2  \ll N^3 \n{f_{\rm unf}}_{U^2}^2 \n{ g}_{U^2}^2 \ll N^3 \cF(M)^{-1},\]
wherein, we have used the fact that $\n{g}_{U^2} \leq \n{g}_{\infty} \ll 1$. Thus, for all $d \in [-N,N]$ outside of an exceptional set of size $\ll N/\cF(M)^{1/3}$, we have
\[ \Big| \sum_{n \in I_{a',i'} } f_{\rm unf}(n) g(n+d) \Big| \ll  N/ \cF(M)^{1/3}.  \]
The terms involved in $\mathcal{E}$ can be handled similarly.

Thus, for all $d \in [-N,N]$ outside of an exceptional set of size $O( N/\cF(M)^{1/3})$, one has
\begin{align} \label{newrev1}
    \sum_{n \in I_{a',i'}}\1_{A_{a',i'}}(n ) \1_{A_{a,i}}(n+d) &  =  \sum_{n \in I_{a',i'} \cap (I_{a,i}-d )}   f_{a',i'}(n) f_{a,i} (n+d) \nonumber \\
    & \qquad \qquad \qquad + O(\varepsilon^5 (|I_{a,i}| + |I_{a',i'}|) + N/\cF(M)^{1/3}) .
\end{align} 
Now, except $O(\varepsilon N/(qM))$ many $d \in I_{a-a,i-i'}$, we have that $I_{a,i} \cap (I_{a',i'} + d)$ is an arithmetic progression of size $\geq \varepsilon N/qM \gg \varepsilon |I_{a,i}|$.  We let $G'$ be the set of all these $d$. We observe that $G'$ is in fact an arithmetic progression of size $(N/qM ( 1- O(\eps) )$. Moreover, for any $d \in G'$, we can use Lemma~\ref{distribution-integral-a} along with the $(\cF(M),N)-$irrationality of $\vec{\theta}$ to deduce that
\begin{align} \label{fourvisions}
\sum_{n \in I_{a',i'} \cap (I_{a,i}-d )}   f_{a',i'}(n) f_{a,i} (n+d) & =  \sum_{n \in I_{a',i'} \cap (I_{a,i}-d )}   F_{a',i'}(n \vec{\theta}) F_{a,i} (n\vec{\theta} +d\vec{\theta})  \nonumber \\
& = | I_{a,i} \cap (I_{a',i'} + d)| \bigg( \int_{\TT^d} F_{a',i'}(\gamma) F_{a,i}(\gamma + d \vec{\theta})  + O(\varepsilon^5) \bigg) \nonumber \\
& = | I_{a,i} \cap (I_{a',i'} + d)| \bigg( F_{a,i} \circ F_{a',i'}(d \vec{\theta})  + O(\varepsilon^5) \bigg).
\end{align}

We will now apply Lemma \ref{proportionlem} and the fact that $F_{a,i} \circ F_{a',i'}$ is $M$--Lipschitz to deduce that the number of $d \in G'$ for which $F_{a,i} \circ F_{a',i'}(d \vec{\theta}) \gg \varepsilon^2 \eta^2$, where $\eta = \delta^{1000}$ (see  \eqref{cot}), is at least
\begin{equation*}
    |I_{a-a,i-i'}| \big( \mu( \{ \vec{x} \in \TT^d : F_{a,i} \circ F_{a',i'}(\vec{x} ) \gg \varepsilon^2 \eta^2 \})  - O(\varepsilon^2 \eta^2) \big). 
\end{equation*}
Since $\eta = \delta^{1000}$, we see that  $\varepsilon^5$ can be made to be much smaller than $\varepsilon^2 \eta^2$ by choosing $\varepsilon$ to be sufficiently small in terms of $\delta$ (in fact $\eps < \delta^{1000}$ suffices). Combining this with \eqref{fourvisions}, we see that  any $d \in G'$ satisfying $F_{a,i} \circ F_{a',i'}(d \vec{\theta}) \gg \varepsilon^2 \eta^2$ must also satisfy 
\[\sum_{n \in I_{a',i'} \cap (I_{a,i}-d )}   f_{a',i'}(n) f_{a,i} (n+d)  \gg | I_{a,i} \cap (I_{a',i'} + d)| \eps^2 \eta^2 \gg \eps^3 \eta^2 N/qM .  \]
Note that the right hand side here is much larger than the error terms in \eqref{newrev1}. Hence, the dominating contribution to the left hand side in \eqref{newrev1} is the term described above.

Amalgamating the above observations, we deduce that  
\begin{align} \label{ghkl1}
\Big| \{ d & \in I_{a-a',i-i'} :  \sum_{n \in I_{a',i'}}\1_{A_{a',i'}}(n )  \1_{A_{a,i}}(n+d) 
 \gg \varepsilon^3 \eta^2 |I_{a',i'}|\} \Big|  \nonumber \\
& \geq   \frac{N}{qM}\Big( \mu( \{ \vec{x} \in \TT^d : F_{a,i} \circ F_{a',i'}(\vec{x} ) \gg \varepsilon^2 \eta^2 \})  - O(\varepsilon^2 \eta^2+\eps+ 1/\cF(M)^{1/3}) \Big) \nonumber \\
& \geq   \frac{N}{qM}( \mu( \{ \vec{x} \in \TT^d : F_{a,i} \circ F_{a',i'}(\vec{x} ) \gg \varepsilon^2 \eta^2 \})  - O(\eps) ) ,
\end{align}
with the second and third error terms in the first step following from the exclusion of unsuitable choices of $d$ in the preceding discussion. 


\section{Applying Kneser and Inverse Kneser type inequalities} \label{invKnesersection}

We begin by recording some notation. Let $A, B \subseteq \mathbb{T}^d$. Then we define $A \Delta B$ to be the set $(A \setminus B) \cup (B \setminus A)$. Given bounded functions $f,g : \mathbb{T}^d \to \mathbb{C}$, we define the functions $f \circ g, f * g : \mathbb{T}^d \to \mathbb{C}$ by writing
\[ f \circ g (\vec{x}) = \int_{\mathbb{T}^d} f(\vec{x}) g(\vec{x} - \vec{y}) d \vec{y} \ \ \text{and} \ \ f * g (\vec{x}) = \int_{\mathbb{T}^d} f(\vec{y}) g(\vec{x} - \vec{y}) d \vec{y}  \]
for every $\vec{x} \in \mathbb{T}^d$. For every $(a,i) \in \mathbb{Z}/q\mathbb{Z} \times [M]$, we define the sets
\begin{align*}
    K_{a,i} & = \{ \vec{\gamma} \in \TT^d : F_{a,i}(\vec{\gamma}) \geq \eta/2\} ,\\ 
    S_{a,i} & = \{ \vec{\gamma} \in \TT^d : F_{a,i}(\vec{\gamma}) \geq \eta \}, \\
     T_{a,i} & = \{ \vec{\gamma} \in \TT^d : F_{a,i}(\vec{\gamma}) \geq 1- \eta^c \},
\end{align*} 
where $\eta, c$ are defined in \eqref{cot}. Note that $K_{a,i} \supseteq S_{a,i} \supseteq T_{a,i}$. We have
\[ \int_{\TT^d} F_{a,i}(\vec{\gamma}) d \vec{\gamma} \leq \mu(T_{a,i}) + (1 - \eta^c) \mu(S_{a,i}\setminus T_{a,i}) + \eta,  \]
and so, 
\begin{equation} \label{sai}
\mu(S_{a,i}) =  \mu(T_{a,i}) + \mu(S_{a,i} \setminus T_{a,i}) \geq \int_{\TT^d} F_{a,i}(\vec{\gamma}) d \vec{\gamma} + \eta^c \mu(S_{a,i} \setminus T_{a,i}) - \eta.   
\end{equation}
Here, as before, we define the measure $\mu$ on $\mathbb{T}^d$ to satisfy $\mu(S) = \nu_{d}( \pi^{-1}(S) \cap [0,1)^d)$ for every Borel set $S \subseteq \mathbb{T}^d$, where $\pi : \mathbb{R}^d \to \mathbb{T}^d$ is the standard projection map and $\nu_d$ is the Lebesgue measure on $\mathbb{R}^d$.

One may similarly argue that
\begin{equation} \label{hpl2}
    \mu(K_{a,i}) \geq \int_{\TT^d} F_{a,i}(\vec{\gamma}) d \vec{\gamma} + \eta^c \mu(K_{a,i} \setminus T_{a,i}) - \eta/2.  
\end{equation}

We will need the following analogue of Kneser's inequality in $\TT^d$ as proven by Tao \cite[Corollary 1.2]{Ta2018}.
\begin{lemma} \label{kneser}
    Let $S_1, S_2 \subseteq \TT^d$ be measurable sets. Let $\lambda \in \mathbb{R}$ satisfy 
    \[ 0 < \lambda < \min \{ \mu(S_1)^2, \mu(S_2)^2\}.\]
    Then 
    \[ \mu(\{\vec{x} \in \TT^d: \1_{S_1} * \1_{S_2}(\vec{x}) \geq \lambda \}) \geq \min\{1,\mu(S_1) + \mu(S_2)\} - O( \lambda^{1/2}). \]
\end{lemma}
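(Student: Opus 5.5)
The plan is to deduce the statement from a continuous Pollard-type inequality on $\TT^d$, and then choose the truncation level optimally. Write $\mu_j = \mu(S_j)$ and $h = \1_{S_1} * \1_{S_2}$, and let
\[ \Sigma_\lambda = \{\vec{x} \in \TT^d : h(\vec{x}) \geq \lambda\}. \]
The intermediate inequality I aim for is the following: for every $0 < t \leq \min\{\mu_1,\mu_2\}$,
\begin{equation*}
\int_{\TT^d} \min\{h(\vec{x}), t\} \, d\vec{x} \ \geq \ t\big(\min\{1, \mu_1 + \mu_2\} - t\big).
\end{equation*}
This is the analogue on $\TT^d$ of Pollard's inequality $\sum_x \min\{r(x),t\} \geq t \min\{p, |A|+|B|-t\}$ in $\mathbb{Z}/p\mathbb{Z}$. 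It should hold because $\TT^d$ is connected, so Kneser's theorem has no non-trivial proper open subgroups to obstruct it.

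Granting this inequality, the statement follows at once. Take $t = \lambda^{1/2}$. The hypothesis $\lambda < \min\{\mu_1^2,\mu_2^2\}$ is exactly what guarantees $t < \min\{\mu_1,\mu_2\}$, so the inequality applies. On $\Sigma_\lambda$ we bound $\min\{h,t\}$ by $t$, and off $\Sigma_\lambda$ we bound it by $h < \lambda$. This gives
\[ t\,\mu(\Sigma_\lambda) + \lambda \ \geq\ t\big(\min\{1,\mu_1+\mu_2\} - t\big), \]
and dividing by $t$ yields $\mu(\Sigma_\lambda) \geq \min\{1,\mu_1+\mu_2\} - t - \lambda/t = \min\{1,\mu_1+\mu_2\} - 2\lambda^{1/2}$.

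To prove the Pollard-type inequality, I would follow Pollard's original argument, which reduces the truncated integral to a family of set-sumset estimates. The layer-cake formula gives
\[ \int_{\TT^d} \min\{h,t\} = \int_0^t \mu(\{h \geq s\})\, ds. \]
Pollard's combinatorial lemma (an $e$-transform/induction on $|B|$ argument, carried out with $\mu$ in place of counting measure) lets one compare this with the measures of sumsets $S_1' + S_2'$ of suitable subsets $S_1' \subseteq S_1$, $S_2' \subseteq S_2$. To these sumsets I would apply Kneser's theorem for compact connected abelian groups (Kneser 1956; equivalently Macbeath's inequality on $\TT^d$):
\[ \mu_*(X+Y) \geq \min\{1, \mu(X)+\mu(Y)\}. \]
Summing over the levels $s \in (0,t]$ loses at most $t^2$ in total, which is the $-t$ in the bracket.

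If the continuous $e$-transform proves awkward, the fallback is discretisation. First reduce to $S_1, S_2$ compact, using inner regularity and the fact that $h$ varies continuously under $L^1$ perturbation of the sets. Then approximate $S_1, S_2$ by unions of cubes of a fine grid $(\frac1n\mathbb{Z}/\mathbb{Z})^d$. Finally transfer the problem to a cyclic group $\mathbb{Z}/p\mathbb{Z}$ with $p$ prime via a Freiman isomorphism of the grid, given by lexicographic embedding with large, well-separated bases. In $\mathbb{Z}/p\mathbb{Z}$ the discrete Pollard inequality holds with no subgroup obstruction. The main obstacle is exactly this step: controlling sums across the wrap-around, so that the discrete Pollard bound on $\mathbb{Z}/p\mathbb{Z}$ really returns $\min\{1,\mu_1+\mu_2\}$ rather than a weaker bound. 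I expect the cleanest route to be citing the continuous Kneser inequality and doing Pollard's $e$-transform induction directly in measure, with discretisation used only to justify the measure-theoretic limits.
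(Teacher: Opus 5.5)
\textbf{Verdict: genuine gap.} The paper does not prove this lemma; it quotes it from Tao \cite{Ta2018} (Corollary 1.2). Your reduction is correct. Granting $\int\min\{h,t\}\ge t(\min\{1,\mu_1+\mu_2\}-t)$ at $t=\lambda^{1/2}<\min\{\mu_1,\mu_2\}$, splitting over $\Sigma_\lambda$ and its complement gives the statement with constant $2$. But that Pollard-type inequality on $\TT^d$ is the whole content, and neither of your routes establishes it.

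Your main route is only a description, and the mechanism it describes cannot work as stated. As you say, Pollard's lemma is an induction on $|B|$ through a transform of the pair. With Haar measure there is nothing to induct on, and you do not say what replaces it. A level-by-level scheme cannot succeed either. Suppose each $\mu(\{h\ge s\})$ is bounded below by $\mu(S_1'(s))+\mu(S_2'(s))$ with $S_j'(s)\subseteq S_j$, and these bounds are integrated over $s\in(0,t]$. Take $S_1=S_2$ a small disc of measure $m$ in $\TT^2$ and $t=m$, which is in your stated range. Then your inequality is an equality, $\int h=m^2$. But $\mu(\{h\ge s\})\to 4m$ as $s\to 0$, while every level-wise bound is at most $\mu_1+\mu_2=2m$. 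So the integrated bounds are at most $\int_0^m\min\{\mu(\{h\ge s\}),2m\}\,ds<\int_0^m\mu(\{h\ge s\})\,ds=m^2$.

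Your fallback fails exactly at the step you flag, and the obstruction is structural. For $d\ge 2$ the grid $(\ZZ/n\ZZ)^d$ is not cyclic, and a lexicographic Freiman embedding models addition in $\ZZ^d$, not in $\TT^d$. The torus representation function is a sum over lifts, $r_{\TT}(\bar x)=\sum_{x\mapsto\bar x}r_{\ZZ^d}(x)$. Since $\min\{\sum_j r_j,t\}\le\sum_j\min\{r_j,t\}$ goes the wrong way, a Pollard bound in $\ZZ/p\ZZ$ for the embedded sets says nothing about the torus.

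The repair is to use only genuine homomorphisms and subgroups:
\begin{enumerate}
\item Approximate $S_1,S_2$ in $L^1$ by finite unions of boxes $X,Y$. This is harmless, since $\|\1_{S_1}*\1_{S_2}-\1_X*\1_Y\|_\infty\le\mu(S_1\Delta X)+\mu(S_2\Delta Y)$.
\item Pull back along $\psi_K:\TT\to\TT^d$, $x\mapsto(x,Kx,\dots,K^{d-1}x)$. As $K\to\infty$ this curve equidistributes, so $\mu_{\TT}(\psi_K^{-1}X)\to\mu(X)$ and $\1_{\psi_K^{-1}X}*\1_{\psi_K^{-1}Y}=(\1_X*\1_Y)\circ\psi_K+o(1)$ uniformly. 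Hence the truncated integral transfers to $\TT$.
\item On $\TT$, discretise inside the subgroup $\tfrac1p\ZZ/\ZZ\cong\ZZ/p\ZZ$. For $X,Y$ unions of grid arcs indexed by $A,B\subseteq\ZZ/p\ZZ$, $\1_X*\1_Y$ is the piecewise-linear interpolation of $r_{A,B}/p$. Concavity of $\min\{\cdot,t\}$ then gives $\int\min\{h,t\}\ge p^{-2}\sum_x\min\{r_{A,B}(x),pt\}$, and Pollard's theorem in $\ZZ/p\ZZ$ finishes.
\end{enumerate}
With this repair your argument becomes a complete, self-contained proof with explicit constant $2$, in place of the paper's citation.
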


Applying Lemma \ref{kneser}, we see that whenever $\mu(K_{a,i}), \mu(K_{a',i'})  > \lambda^{1/2}$, with $\lambda$ being as in \eqref{cot}, we have
\begin{align*} \label{ineq31}
   \mu(  \{\vec{x} \in \TT^d  : & F_{a,i}  \circ F_{a',i'} (\vec{x} ) \geq \lambda \eta^2/4 \} ) 
     \geq \mu(\{\vec{x} \in \TT^d :  \1_{K_{a,i}} \circ 1_{K_{a',i'}} (\vec{x} ) \geq \lambda \} )  \\
     & \geq \min\{1, \mu(K_{a,i} + \mu(K_{a',i'}) \}  - O(\lambda^{1/2}) \\
    & \geq \min\bigg\{1,\int_{\TT^d} F_{a,i} + \int_{\TT^d} F_{a',i'} + \eta^c ( \mu(K_{a,i} \setminus T_{a,i}) + \mu(K_{a',i'} \setminus T_{a',i'}) ) - \eta\bigg\} - O(\lambda^{1/2}),
\end{align*}
where the final step follows from \eqref{hpl2}. Since $\cF$ grows sufficiently fast in terms of $\varepsilon, \eta, \delta$, we may apply Lemma \ref{t1} to get that
\[ \int_{\TT^d} F_{a,i} + \int_{\TT^d} F_{a',i'} - \eta \geq \alpha(a,i) + \alpha(a',i') - O(\varepsilon + qM/\cF(M) + \eta) = \alpha(a,i) + \alpha(a',i') - O(\eta) \]
whenever $(a,i), (a',i') \notin E$.
 
Since we chose $\lambda = \varepsilon^2$, as defined in \eqref{cot}, we obtain the following characterisation. Given $(a,i), (a',i') \notin E$ satisfying $\mu(K_{a,i}), \mu(K_{a,i})  > \varepsilon$, we either have 
\begin{equation}  \label{con5}
\mu(K_{a,i}\setminus T_{a,i}),\mu(K_{a',i'} \setminus T_{a',i'})  < \delta^{1/2}  
\end{equation}
and
\begin{equation} \label{con6}
\mu(  \{\vec{x} \in \TT^d  : F_{a,i}  \circ F_{a',i'} (\vec{x} ) \geq \varepsilon^2 \eta^2/4 \} ) 
     \geq \min\{1,\alpha(a,i) + \alpha(a',i')\}  - O(\eta),
\end{equation}
or 
\begin{equation} \label{this}
\mu(  \{\vec{x} \in \TT^d  : F_{a,i}  \circ F_{a',i'} (\vec{x} ) \geq \varepsilon^2 \eta^2/4 \} ) \geq \min\{1 , \alpha(a,i) + \alpha(a',i') + \delta - \delta^{1000}\} - O(\varepsilon). 
\end{equation}

We will also require the following inverse version of Kneser's inequality due to Tao \cite[Theorem 1.3]{Ta2018}.

\begin{lemma} \label{tao}
Let $\varepsilon'>0$. Then for every $\delta>0$ sufficiently small in terms of $\varepsilon'$, if $S_1, S_2 \subseteq \TT^d$ satisfy 
\[ \mu(S_1), \mu(S_2), 1- \mu(S_1) - \mu(S_2) \geq \varepsilon' \] 
then either
\[ \mu( \{ \vec{x} \in \TT^d : \1_{S_1}*\1_{S_2}(\vec{x}) \geq \delta \} ) \geq \mu(S_1) + \mu(S_2) + \delta \]
or there exists some surjective homomorphism $\varphi : \TT^d \to \TT$ and some intervals $I_1, I_2 \subseteq \TT$ such that 
\[ \mu(S_1 \Delta \varphi^{-1}(I_1)), \mu(S_2 \Delta \varphi^{-1}(I_2)) \leq \varepsilon'. \]
\end{lemma}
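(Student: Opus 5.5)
This is a direct specialisation of Tao's inverse theorem for Kneser's inequality in compact connected abelian groups \cite[Theorem 1.3]{Ta2018}. The plan is to check that the hypotheses there are met in our setting and that the two appearances of $\delta$ in the conclusion can be taken to be the same number. Tao's result applies to any compact connected abelian group $G$ with normalised Haar measure. Its content is this: given $\varepsilon'>0$, there is $\delta_0=\delta_0(\varepsilon')>0$ such that the following holds whenever $\mu(S_1),\mu(S_2),1-\mu(S_1)-\mu(S_2)\ge\varepsilon'$ and the popular sumset satisfies
\[ \mu(\{\vec{x}: \1_{S_1}*\1_{S_2}(\vec{x})\ge \delta_0\}) < \mu(S_1)+\mu(S_2)+\delta_0 . \]
Under these conditions there are a surjective continuous homomorphism $\varphi:G\to\TT$ and intervals $I_1,I_2$ with $\mu(S_j\Delta\varphi^{-1}(I_j))\le\varepsilon'$. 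We apply this with $G=\TT^d$, which is compact, connected and abelian, and whose Haar probability measure is exactly the measure $\mu$ defined above via $\nu_d$ on $[0,1)^d$. Note also that every continuous surjective homomorphism $\TT^d\to\TT$ has the form $\vec{x}\mapsto\vec{a}\cdot\vec{x}$, so the $\varphi$ produced is of the type used later in the paper.

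The one small point is uniformity in $\delta$. The lemma asks for the dichotomy for \emph{every} sufficiently small $\delta\le\delta_0$, with the same $\delta$ serving both as the popularity threshold and as the excess. I will handle this by monotonicity. If $\delta\le\delta_0$, then
\[ \{\1_{S_1}*\1_{S_2}\ge\delta\}\supseteq\{\1_{S_1}*\1_{S_2}\ge\delta_0\}. \]
So if the first alternative fails for $\delta$, we get
\[ \mu(\{\1_{S_1}*\1_{S_2}\ge\delta_0\})\le\mu(\{\1_{S_1}*\1_{S_2}\ge\delta\})<\mu(S_1)+\mu(S_2)+\delta\le\mu(S_1)+\mu(S_2)+\delta_0 , \]
and Tao's theorem then yields the structured alternative. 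Measurability of the superlevel sets is automatic, since $\1_{S_1}*\1_{S_2}$ is continuous.

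If one wanted to reprove Tao's theorem rather than quote it, the route I would follow is his. First argue by contradiction and pass to an ultralimit of counterexamples with $\delta\to0$. Use the arithmetic regularity lemma on $\TT^d$, or equivalently a Bohr-set approximation, to reduce to sets that are unions of level sets of a Lipschitz function on a bounded-dimensional torus. Then apply the equality case of Kneser's theorem (Kemperman-type structure) together with a $3k-4$-type inverse theorem in $\ZZ/p\ZZ$ at the finite level, which shows the near-extremisers are close to Bohr sets of rank one, that is, preimages of intervals. The hypothesis $1-\mu(S_1)-\mu(S_2)\ge\varepsilon'$ excludes the case of a near-full sumset, and connectedness of $G$ excludes the periodic (open subgroup) extremisers of Kneser's theorem. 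The main obstacle in that route would be the stability step: upgrading approximate equality in Kneser to proximity to a one-dimensional Bohr set with no loss in dimension. This is exactly the substance of \cite{Ta2018}, so here I simply invoke it.
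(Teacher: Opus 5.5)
Your proposal is correct and coincides with the paper's treatment: the paper gives no argument of its own and simply quotes Tao's robust inverse Kneser theorem \cite[Theorem 1.3]{Ta2018} for the compact connected group $\TT^d$. Your monotonicity remark (passing from a threshold $\delta\le\delta_0$ to $\delta_0$, which also absorbs the harmless difference between the conditions $\1_{S_1}*\1_{S_2}\geq\delta$ and $\1_{S_1}*\1_{S_2}>\delta$) is valid bookkeeping, while your closing sketch of how to reprove Tao's theorem is not needed and plays no role in the argument.
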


 Now, let $(a_0,i_0) \in E$ be such that $\alpha(a_0,i_0)$ is maximal. Let 
\[ J = \{ (a,i) \notin E: \alpha(a,i) \geq 10 \varepsilon'\}. \]
We see that
\[ \sum_{(a,i) \in J} \alpha(a,i) (N/qM) + (qM) (10 \varepsilon')(N/qM)  + |E| (N/qM) \geq |A| , \]
whence, 
\[  \sum_{(a,i) \in J} \alpha(a,i) \geq ( \alpha- \varepsilon - 10 \varepsilon'  ) qM. \]
Firstly, this implies that 
\begin{equation} \label{smi}
    |J| \geq (\alpha - \varepsilon - 10 \varepsilon') qM.
\end{equation}
Moreover, since $|J| \leq qM$, we also get that 
\begin{equation} \label{tree1}
    \alpha(a_0,i_0) \geq \alpha - \varepsilon - 10 \varepsilon'. 
\end{equation}

Let $J'$ be the set of all $(b,j) \in \mathbb{Z}/q\mathbb{Z} \times [-M,M]$ such that $b= a - a'$ and $j = i-i'$ or $j =i - i'- 1$ for some $(a,i), (a',i') \in J$ with one of $(a,i), (a',i')$ equalling $(a_0, i_0)$. Note that $4|J| \geq |J'| \geq  |J|$. Furthermore, for any $(b,j) \in J'$, we see that
\begin{equation} \label{2min}
    \max_{\substack{(a,i), (a',i') \notin E, \\ a-a' = b \ \text{and} \ i-i' = j \ \text{or} \ j- 1}} (\alpha(a,i) + \alpha(a',i')) = \max_{\substack{(a,i),(a',i') \in J , \\  a-a' = b \ \text{and} \  i-i' = j \ \text{or} \ j- 1}} (\alpha(a,i) + \alpha(a',i')).
\end{equation}
Let $J''$ be the set of all $(a,i) \in J$ for which there exists some $(a',i') \in J$ such that $(a,i), (a',i')$ maximise the right hand side of \eqref{2min} for some $(b,j) \in J'$ and $\alpha(a,i) \geq \alpha(a',i')$. In particular, this means that whenever $(a,i) \in J''$, one has
\begin{equation} \label{dish}
\alpha(a,i) \geq \frac{\alpha(a,i) + \alpha(a',i)}{2} \geq \frac{\alpha(a_0,i_0)}{2} \geq \frac{ \alpha - \varepsilon - 10 \varepsilon'}{2},
\end{equation}
with the last inequality following from \eqref{tree1}.

Now, let $(a,i), (a',i') \in J$ satisfy 
\begin{equation} \label{assum3}
\mu(K_{a',i'} \setminus T_{a',i'}) , \mu(K_{a,i} \setminus T_{a,i}) < \delta^{1/2} \ \ \text{and} \ \ \mu(S_{a',i'}), \mu(S_{a,i}) < 1/2 - \varepsilon'. 
\end{equation}
Moreover, if $(a,i), (a',i')$ satisfy the maximum on the right hand side in \eqref{2min} for some $(b,j) \in J'$, then we must have
\begin{equation} \label{tree}
\max\{ \alpha(a,i), \alpha(a',i')\} \geq (\alpha(a,i) + \alpha(a',i'))/2 > \alpha(a_0,i_0)/2 \geq \alpha/2 - 6 \varepsilon',
\end{equation}
with the final step following from \eqref{tree1}. Since $J \cap E = \emptyset$, we can apply \eqref{sai} along with Lemma \ref{t1} to discern that
\[ \mu(S_{a',i'}) \geq \alpha(a',i') - O(\varepsilon - qM/\cF(M)) - \eta. \]
Recalling the choice of parameters in \eqref{cot} and noting the fact that $\delta$ is sufficiently small in terms of $\varepsilon'$, we get that 
\begin{equation} \label{shr}
\mu(S_{a',i'}) \geq \alpha(a',i') -  O(\varepsilon+ qM/\cF(M)) - \delta^{1000} \geq 10 \varepsilon' -\varepsilon' = 9 \varepsilon'.
\end{equation}
Applying Lemma \ref{tao}, we see that either
\begin{align} \label{tmne}
    \mu( \{ \vec{x} \in \TT^d : F_{a,i} \circ F_{a',i'}(\vec{x})&  \geq \delta \eta^2 \} )
     \geq
    \mu( \{ \vec{x} \in \TT^d :  \1_{S_{a,i}} \circ \1_{S_{a',i'}}(\vec{x}) \geq \delta \} )  \nonumber \\
    & \geq \mu(S_{a',i'}) + \mu(S_{a,i}) + \delta \nonumber \\
    & \geq \alpha(a',i')+ \alpha(a,i) + \delta -  O(\varepsilon+ qM/\cF(M)) - 2 \delta^{1000} 
\end{align}
or there exists some surjective homomorphism $\varphi : \TT^d \to \TT$ and some interval $I \subseteq \TT$ such that 
\begin{equation} \label{inne}
\mu(S_{a,i} \Delta \varphi^{-1}(I)) \leq \varepsilon'. \end{equation}

Now suppose that we are not in the case when \eqref{assum3} holds, but we have that
\begin{equation} \label{echo1}
\mu(K_{a',i'} \setminus T_{a',i'}) < \delta^{1/2} \ \ \text{and} \ \ \mu(S_{a',i'}) \geq 1/2 - \varepsilon'
\end{equation}
holds for either $(a,i)$ or $(a',i')$. In this case, we first note that
\[ (1 - \eta^c) \mu(S_{a',i'}) - \mu(S_{a',i'} \setminus T_{a',i'}) \leq (1 - \eta^c) \mu(T_{a',i'}) \leq \int_{\TT^d} F_{a',i'}(\vec{\gamma}) d \vec{\gamma} \leq \alpha(a',i') + O(\varepsilon),  \]
and so, we must have
\[( 1/2 - \varepsilon')(1 - \eta^c) \leq  (1 - \eta^c)\mu(S_{a',i'}) \leq  \alpha(a',i') + O(\varepsilon) + \delta^{1/2}. \]
The above implies that
\begin{equation} \label{fire}
\alpha(a',i') \geq 1/2 - \varepsilon' - 10 \delta^{1/2} - O(\varepsilon) .
\end{equation}

Finally, if neither \eqref{assum3} nor \eqref{echo1} holds for $(a',i')$ or $(a,i),$ then we must be in the situation when either 
\[ 
\mu(K_{a,i} \setminus T_{a,i}) \geq \delta^{1/2}  \ \ \text{or} \ \  \mu(K_{a',i'} \setminus T_{a',i'}) \geq \delta^{1/2}. 
\]
In particular, returning to the discussion surrounding \eqref{con5} and \eqref{this}, we see that in the above case, we must have \eqref{this} being true.

Thus, we now divide our proof of Theorem \ref{m1} into three cases:
\begin{enumerate}
    \item {\bf $1$-dimensional structured case}: Letting $C>0$ be some large absolute constant. There exists some $(a,i) \notin E$ such that
$\alpha(a,i) \geq 1/2 - C\varepsilon'$.
  \item {\bf $2$-dimensional structured case}: When \eqref{inne} and \eqref{assum3} hold for some $(a,i) \in J''$.
  \item {\bf Expansion Case}: When neither of the above two cases hold.
\end{enumerate}

Let us first handle the \emph{$1$-dimensional structured case}. In this case,
 we see that $|A \cap I_{a,i}| \geq (1/2 - O(\varepsilon')) |I_{a,i}|$, where $I_{a,i}$ is a $1$-dimensional arithmetic progression such that
\[ |I_{a,i}| \geq N/(qM) - O(1) \gg_{\varepsilon} N. \]
This precisely gives us the conclusion presented in \eqref{it2} in Theorem \ref{m1}. Hence, it suffices to study the \emph{Expansion Case} and the \emph{$2$-dimensional structured case}.


\section{Expansion case} \label{expansionsection}

Let $K$ be the set of all $(b,j) \in ( \mathbb{Z}/q\mathbb{Z} \times [-M,M]) \setminus J'$ such that there exist $(a,i), (a',i') \notin E$ satisfying $\alpha(a,i), \alpha(a',i') \geq \varepsilon$ and $(a,i) - (a',i') = (b,j)$ or  $(a,i) - (a',i') = (b,j-1)$. For every $(b,j) \in K \cup J'$, we fix some $(a_{(b,j)}, i_{(b,j)}), (a_{(b,j)}', i_{(b,j)}') \notin E$ such that
\[ a_{(b,j)} - a_{(b,j)}' = b \ \ \text{and} \ \ i_{(b,j)} - i_{(b,j)}' \in \{j,j-1\}  \ \ \text{and} \ \ \alpha(a_{(b,j)}, i_{(b,j)}) + \alpha(a_{(b,j)}', i_{(b,j)}') \ \text{is maximal}. \]
Given $\vec{b} \in J'$ and noting \eqref{2min}, we see that $\alpha(a_{\vec{b}}, i_{\vec{b}}), \alpha(a_{\vec{b}}', i_{\vec{b}}')  \geq 10 \varepsilon'$ and one of these is at least $\alpha/2 - \varepsilon/2 - 5 \varepsilon'$ since one of $(a_{\vec{b}}, i_{\vec{b}})$ or $(a_{\vec{b}}', i_{\vec{b}}')$ is an element of $J''$. Since we are not in the $1$-dimensional structured case or the $2$-dimensional structured case, we get either \eqref{this} or \eqref{tmne} to hold without the minimum against $1$ condition. In either case, we have
\begin{align} \label{reveqn1}
    \mu( \{ \vec{x} \in \TT^d : F_{a_{\vec{b}},i_{\vec{b}}} \circ F_{a_{\vec{b}}',i_{\vec{b}}'}(\vec{x}) & \gg \varepsilon^2 \eta^2 \} ) 
\geq \alpha(a_{\vec{b}}, i_{\vec{b}}) + \alpha(a_{\vec{b}}', i_{\vec{b}}') + \delta - 2\delta^{1000} - O(\varepsilon) \nonumber \\
& = \max_{\substack{a- a' = b, \\ i - i' \in \{j,j-1\}}}\{  \alpha(a,i) + \alpha(a',i') \} + \delta - 2\delta^{1000} - O(\varepsilon) .
\end{align}  
Now, let $\vec{b} \in K$. As before, since we are not in the $1$-dimensional structured case, we have either \eqref{con6} or \eqref{this} being true without the minimum against $1$ condition, and in either case, we have
\begin{align} \label{reveqn3}
       \mu( \{ \vec{x} \in \TT^d : F_{a_{\vec{b}},i_{\vec{b}}} \circ F_{a_{\vec{b}}',i_{\vec{b}}'}(\vec{x})  \geq \delta \eta^2 \} ) 
& \geq \alpha(a_{\vec{b}}, i_{\vec{b}}) + \alpha(a_{\vec{b}}', i_{\vec{b}}') - O(\eta) \nonumber \\
& = \max_{\substack{a- a' = b, \\ i - i' \in \{ j,j-1\}}}\{  \alpha(a,i) + \alpha(a',i') \}  - O(\eta) .
\end{align}

Let $\beta(a,i) = \alpha(a,i)$ whenever $(a,i) \notin E$ and $\alpha(a,i) \geq \varepsilon$, and $\beta(a,i) = 0$ whenever $(a,i) \in E$ or whenever $\alpha(a,i) < \varepsilon$. At this point, we need the following lemma due to Eberhard--Green--Manners \cite[Lemma 4.9]{EGM2014}. This is a nice application of the planar Brunn--Minkowski inequality. 

\begin{lemma} \label{egmbm}
    Let $\tilde{\eps} >0$. Let $\beta: \mathbb{Z}/q\mathbb{Z} \times [M] \to [0,1]$ such that for any $(a,i) \in \mathbb{Z}/q\mathbb{Z} \times [M]$, one has either $\beta(a,i) \geq \tilde{\varepsilon}$ or $\beta(a,i) = 0$.  Then 
    \[\sum_{(b, j) \in \mathbb{Z}/q\mathbb{Z}\times [-M,M]} \max_{\substack{a-a' = b,\\ i-i'
\in \{j,j-1\}}}\{  \beta(a,i) + \beta(a',i') \} \geq 4 \sum_{(a,i) \in \mathbb{Z}/q \mathbb{Z}\times [M]} \beta(a,i) - O( \tilde{\eps} qM). \]
\end{lemma}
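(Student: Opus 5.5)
We plan to deduce Lemma \ref{egmbm} from the planar Brunn--Minkowski inequality, by turning $\beta$ into a compact set in a cylinder $\mathbb{Z}/q\mathbb{Z}\times\mathbb{R}\times\mathbb{R}$. This follows the strategy of \cite[Lemma 4.9]{EGM2014}.

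\textbf{Setup.} For each $a \in \mathbb{Z}/q\mathbb{Z}$, define a set $\mathcal{B}_a \subseteq \mathbb{R}^2$ as a union of rectangles:
\[ \mathcal{B}_a = \bigcup_{i \in [M]} [i-1,i] \times [0,\beta(a,i)], \]
and let $\mathcal{B} = \bigcup_a \{a\}\times \mathcal{B}_a$.

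\textbf{Step 1: the difference set sits over the max-sums.} Take $(x,y)\in\mathcal{B}_a$ and $(x',y')\in\mathcal{B}_{a'}$ with $x\in[i-1,i]$ and $x'\in[i'-1,i']$. Then $x-x'\in[i-i'-1,i-i'+1]$, so it lies in a column $[j-1,j]$ with $j\in\{i-i',i-i'+1\}$. This matches the index condition $i-i'\in\{j,j-1\}$.

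For the vertical direction we use $\mathcal{B}_a + (-\mathcal{B}_{a'})$ after reflecting $\mathcal{B}_{a'}$ vertically. Then $y+y'\le \beta(a,i)+\beta(a',i')$. So, for each fixed $b=a-a'$, the set $\bigcup_{a-a'=b}(\mathcal{B}_a - \mathcal{B}_{a'}^{\mathrm{refl}})$ is contained in $\bigcup_j [j-1,j]\times[-m_{b,j},m_{b,j}]$. Here $m_{b,j}$ is the max appearing in the statement (one can also keep heights one-sided). Its area is therefore at most $\sum_j \max\{\beta(a,i)+\beta(a',i')\}$, up to the normalisation of the vertical reflection, which we fix so that the column heights add exactly.

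\textbf{Step 2: lower-bound the area.} The key step is to show that, summed over $b$, the area of these difference sets is at least $4\sum\beta - O(\tilde\eps qM)$.

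First consider a single pair $a,a'$. Brunn--Minkowski in $\mathbb{R}^2$ gives $|\mathcal{B}_a-\mathcal{B}_{a'}|^{1/2}\ge|\mathcal{B}_a|^{1/2}+|\mathcal{B}_{a'}|^{1/2}$, so $|\mathcal{B}_a-\mathcal{B}_{a'}|\ge 2(|\mathcal{B}_a|+|\mathcal{B}_{a'}|)$ only when $|\mathcal{B}_a|\approx|\mathcal{B}_{a'}|$. This alone is not enough, so we need a cyclic argument over $\mathbb{Z}/q\mathbb{Z}$.

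For each $b$, pair $a$ with $a-b$ and write $f_a=|\mathcal{B}_a|$. Brunn--Minkowski gives a lower bound $\max_a(\sqrt{f_a}+\sqrt{f_{a-b}})^2$ for the area of the $b$-fibre. It then suffices to prove
\[ \sum_{b}\max_a(\sqrt{f_a}+\sqrt{f_{a-b}})^2\ge 4\sum_a f_a. \]
To see this, choose $a^*$ maximising $f$. Then for every $b$,
\[ (\sqrt{f_{a^*}}+\sqrt{f_{a^*-b}})^2 \ge (\sqrt{f_{a^*-b}}+\sqrt{f_{a^*-b}})^2 = 4f_{a^*-b}. \]
Summing over $b$ gives exactly $4\sum_a f_a$. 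Since $\sum_a f_a=\sum\beta$, this yields the main term.

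\textbf{Step 3: account for the errors.} The $O(\tilde\eps qM)$ error comes from the discretisation. The sets $\mathcal{B}_a$ are unions of columns, while the true difference set is governed by column maxima. Passing between the continuous area and the discrete max-sum costs boundary effects at column edges.

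The hypothesis that $\beta\ge\tilde\eps$ or $\beta=0$ is what controls these effects. Empty columns contribute nothing. Nonempty columns have height at least $\tilde\eps$, so any fractional overlap of width at most $1$ per column costs at most a bounded multiple of the column height. We expect that making this precise may require replacing $\mathcal{B}_a$ by a slightly shrunk or convexified version and accepting an $O(\tilde\eps)$ loss per column index, giving $O(\tilde\eps qM)$ in total.

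\textbf{Main obstacle.} We expect Step 1 to be the delicate part. One must verify that the Minkowski difference of two column-sets projects into columns of indices $j\in\{i-i',i-i'+1\}$, with height bounded by the maximising $\beta$-sum. This requires choosing the geometric model, for instance using half-open intervals and one-sided heights, so that the sum heights align exactly with $\max\{\beta(a,i)+\beta(a',i')\}$ and not with a larger quantity.
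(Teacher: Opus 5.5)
Your Steps 1--2 already form a complete proof, and they use the same planar Brunn--Minkowski argument (fibring over $\mathbb{Z}/q\mathbb{Z}$ and comparing every $b$ with the largest fibre $a^*$) that the paper delegates to Eberhard--Green--Manners without reproving it. Because $\bigcup_{a-a'=b}(\mathcal{B}_a-\mathcal{B}_{a'}^{\mathrm{refl}})\subseteq\bigcup_j[j-1,j]\times[0,m_{b,j}]$, the chain $\sum_{b,j}m_{b,j}\ge\sum_b\max_a(\sqrt{f_a}+\sqrt{f_{a-b}})^2\ge 4\sum_{a,i}\beta(a,i)$ holds with no error term, so Step 3 and the hypothesis ``$\beta\ge\tilde\eps$ or $\beta=0$'' are unnecessary; just use the one-sided heights $[0,m_{b,j}]$, since the symmetric box $[-m_{b,j},m_{b,j}]$ you first wrote would cost a factor of $2$.
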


We apply Lemma \ref{egmbm} to deduce that
\begin{align} \label{reveqn2}
\sum_{(b,j)\in K \cup J'} \max_{\substack{a-a' = b,\\ i-i'
\in \{j,j-1\}}}\{  \alpha(a,i) + \alpha(a',i') \} 
& = \sum_{(b, j) \in \mathbb{Z}/q\mathbb{Z}\times [-M,M]} \max_{\substack{a-a' = b,\\ i-i'
\in \{j,j-1\}}}\{  \beta(a,i) + \beta(a',i') \} \nonumber \\
& \geq 4 \sum_{(a,i) \in \mathbb{Z}/q\mathbb{Z}\times [M]} \beta(a,i) - O(\varepsilon qM) \nonumber \\
& \geq 4 \sum_{(a,i) \notin E} \alpha(a,i) - O(\varepsilon qM) \nonumber \\
& \geq 4 \sum_{(a,i) \in \mathbb{Z}/q\mathbb{Z}\times[M]} \alpha(a,i) - O(\varepsilon qM),
\end{align}
with the last inequality following from the fact that $|E| \leq \varepsilon^4 qM$, see Lemma \ref{exceptionlemma}.

Recalling  \eqref{ghkl1}, we see that
\begin{align*}
    | \{ d & \in [-N,N]:  \sum_{n \in [N]}\1_{A}(n )  \1_{A}(n+d) 
 \gg \varepsilon^3 \eta^2 N/(qM)\} | \\
 & \geq  \sum_{(b,j) \in K \cup J'} \max_{\substack{a-a' = b,\\ i-i'
\in \{j,j-1\}}} | \{ d  \in I_{b,j}:  \sum_{n \in I_{a',i'}}\1_{A_{a',i'}}(n )  \1_{A_{a,i}}(n+d) 
 \gg \varepsilon^3 \eta^2 |I_{a',i'}|\} |  \\
 & \geq   \frac{N}{qM} \sum_{(b,j) \in K \cup J'} \max_{\substack{a-a' = b,\\ i-i'
\in \{j,j-1\}}} ( \mu( \{ \vec{x} \in \TT^d : F_{a,i} \circ F_{a',i'}(\vec{x} ) \gg \varepsilon^2 \eta^2 \})  - O(\varepsilon) ).
\end{align*}
The contribution towards the right hand side above from terms attached to some $(b,j) \in J'$ is estimated in \eqref{reveqn1}. Similarly, the contribution of terms attached to some $(b,j) \in K$ is estimated in \eqref{reveqn3}. Adding these contributions together, we see that the  right hand side above is at least
\[ \bigg( \frac{N}{qM} \sum_{(b,j) \in K \cup J'} \max_{\substack{a-a' = b,\\ i-i'
 \in \{j,j-1\}}} \{  \alpha(a,i) + \alpha(a',i') \} \bigg) + \frac{N}{qM}|J'| ( \delta - 2\delta^{1000}) - O(\eta N + \eps N). \]
 We may now apply \eqref{reveqn2} and \eqref{smi} to deduce that this lower bound is 
\begin{align} \label{caseone}
     & \geq  4\frac{N}{qM}\sum_{(a,i) \in \mathbb{Z}/q\mathbb{Z}\times[M]}  \alpha(a,i) + \frac{N \delta\alpha}{2}   - O(\eta N + \eps N)  \nonumber \\
     & = 4 |A| + \frac{|A|\delta}{4} -  O(\eta N + \eps N)   \nonumber \\
     & \geq (4 + \delta/5) |A|.
\end{align}
Upon rescaling $\delta$, we obtain the conclusion presented in part \eqref{it1} of Theorem \ref{m1}. This concludes the Expansion case. 

\begin{remark} It is worth mentioning that it was quite crucial that all of the losses we incurred throughout our Kneser and inverse Kneser type steps were $O(\eta + \eps) = o(\delta)$. Furthermore, it was very important that we obtain the extra expansion factor $\delta - 2 \delta^{1000}$ not just for one $(b,j)$, but for $\gg \alpha qM$ many choices of $(b,j)$ since otherwise it could be the case that $M$ is so large in terms of $\delta$ that $N\delta/qM$ is much smaller than the aforementioned error terms $O(\eta N)$.
\end{remark}


\section{$2$-dimensional structured case: Controlling the projection} \label{structuresection}

In this section, we assume that we are in the $2$-dimensional structured case, that is, we assume that \eqref{inne} and \eqref{assum3} hold for some $(a,i) \in J''$. Recall that the former implies that there exists some surjective homomorphism $\varphi : \TT^d \to \TT$ and some interval $I \subseteq \TT$ and some $(a,i) \in J''$ such that 
\begin{equation} \label{quote2}
    \mu(S_{a,i} \Delta \varphi^{-1}(I)) \leq \varepsilon'.
\end{equation}
Let 
\[ B = \{ n \in I_{a, i} : n\vec{\theta} \in \varphi^{-1}(I) \} = \{  n \in I_{a, i} : n \varphi (\vec{\theta}) \in I \}. \]
We know that $\varphi(\vec{x}) = \vec{a} \cdot \vec{x}$ for all $\vec{x} \in \TT^d$, for some $\vec{a} \in \mathbb{Z}^d$ such that $\vec{a} \neq (0,\dots, 0)$. The aim of this section is to prove the following. 

\begin{Proposition}\label{prop: controlling |a|_infty}
    We have $\n{a}_{\infty} \leq \n{a}_{2} \ll_{M, \eta, \varepsilon', \delta} 1$.
\end{Proposition}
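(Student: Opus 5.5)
We aim to bound $\n{\vec a}_2$ by playing the structure of $\varphi^{-1}(I)$ against the regularity of $F_{a,i}$. The set $\varphi^{-1}(I)$ is a union of parallel slabs. As $\vec a$ grows, the slabs become thin in the direction $\vec a/\n{\vec a}_2$. On the other hand, $S_{a,i}$ lies inside the approximate support of an $M$-Lipschitz function which, by \eqref{assum3}, is essentially two-valued. Such a function cannot follow a set that oscillates at scale $1/\n{\vec a}_2 \ll 1/M$.

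\textbf{Step 1: a sharp transition for $F_{a,i}$.} From \eqref{assum3} we know that $\mu(K_{a,i}\setminus T_{a,i})<\delta^{1/2}$. So on all of $\TT^d$ outside a set of measure $<\delta^{1/2}$, either $F_{a,i}<\eta/2$ or $F_{a,i}\ge 1-\eta^c$. Since $F_{a,i}$ is $M$-Lipschitz, any point $\vec x$ with $F_{a,i}(\vec x)\ge 1-\eta^c$ has $F_{a,i}\ge 1/2>\eta/2$ on the ball $B(\vec x, 1/(4M))$. Likewise, any point with $F_{a,i}<\eta/2$ has $F_{a,i}<1/2$ on $B(\vec x,1/(4M))$. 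Thus $T_{a,i}$ and the region $\{F_{a,i}<\eta/2\}$ are separated by the transition layer $K_{a,i}\setminus T_{a,i}$, which has small measure, and each is stable under $1/(4M)$-thickening up to that layer. In particular $\mu\big((T_{a,i}+B(0,r))\setminus T_{a,i}\big)\le \mu(\{\eta/2\le F_{a,i}<1-\eta^c\})+\mu(\{1/2\le F_{a,i}<1-\eta^c\})\lesssim \delta^{1/2}$ for $r= 1/(4M)$. I would state this as a small-boundary / almost-open-under-thickening property of $T_{a,i}$, and also note $\mu(S_{a,i}\Delta T_{a,i})<\delta^{1/2}$.

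\textbf{Step 2: thickening a slab set, and the contradiction.} Write $L=\n{\vec a}_2$. The preimage $\varphi^{-1}(I)$ is $\{\vec x: \vec a\cdot\vec x\in I\}$, which consists of slabs of Euclidean width $|I|/L$ repeated with period $1/L$ in the direction $\vec a/L$. Translating by $t\vec a/L^2$ shifts $\varphi$ by exactly $t$. Hence for $r\ge 1/L$ the thickening $\varphi^{-1}(I)+B(0,r)$ is all of $\TT^d$. More quantitatively, translating by $s\vec a/L^2$ with $s=\min(|I|,1-|I|)$ yields a set $Y$ with $\mu(Y\setminus\varphi^{-1}(I))=s$.

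By \eqref{shr}, \eqref{assum3} and \eqref{quote2}, $\mu(\varphi^{-1}(I))=\mu(S_{a,i})+O(\varepsilon')$ lies in $[8\varepsilon',1/2]$, so $s\ge 8\varepsilon'$. If $L>4M$, the shift $s\vec a/L^2$ has length $\le 1/L<1/(4M)$. Step 1 then gives $\mu((T_{a,i}+\vec v)\setminus T_{a,i})\lesssim\delta^{1/2}$. Combined with $\mu(T_{a,i}\Delta\varphi^{-1}(I))\le \varepsilon'+\delta^{1/2}$, this yields $s\le 2\varepsilon'+O(\delta^{1/2})$, which contradicts $s\ge 8\varepsilon'$. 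So $L\le 4M$, giving the bound stated in Proposition \ref{prop: controlling |a|_infty}; the inequality $\n{\vec a}_\infty\le \n{\vec a}_2$ is trivial.

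\textbf{Main obstacle.} The delicate point is Step 1. We need translation-invariance of $T_{a,i}$ in measure, and not merely pointwise Lipschitz control. The estimate $\mu((T+\vec v)\setminus T)\le\mu(\{\eta/2\le F<1-\eta^c\})$ requires a careful argument. For $\vec x\in (T+\vec v)\setminus T$ we have $F(\vec x)\ge 1-\eta^c-M|\vec v|\ge 1/2$ while $F(\vec x)<1-\eta^c$, so $\vec x\in K\setminus T$. This is exactly the layer controlled by \eqref{assum3}, and the argument works without convex geometry. If the metric on $\TT^d$ or the surjectivity normalisation of $\varphi$ causes trouble, I would instead pass to a primitive $\vec a$ (surjectivity is equivalent to $\gcd(\vec a)=1$) and repeat the argument along a single coordinate direction $\vec e_k$ with $a_k=\n{\vec a}_\infty$. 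This loses only a factor of $\sqrt d\le\sqrt M$.
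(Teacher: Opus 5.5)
Your argument is correct, and it takes a genuinely different and simpler route than the paper.

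\textbf{The paper's route.} It works with the $(d-1)$-dimensional measure of the boundary $\mathcal{B}$ of $\varphi^{-1}(I)$. It computes $\Gamma(\vec a)=2\n{\vec a}_2$ via the lattice $\ZZ^d\cap\vec a^{\perp}$. It then shows (Lemma~\ref{lem: bound B'}) that all but a $1/25$ fraction of $\mathcal B$ lies within $100\varepsilon'/\Gamma(\vec a)$ of the level set $V_{a,i}=\{F_{a,i}=\eta\}$. Finally, the Lipschitz bound places an $\eta/(10^8M)$-thickening of that part of $\mathcal B$ inside $\{9\eta/10\le F_{a,i}\le 11\eta/10\}\subseteq K_{a,i}\setminus T_{a,i}$. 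This gives $\n{\vec a}_2\ll M\eta^{-1}(\varepsilon'+\delta^{1/2})$.

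\textbf{Your route.} You translate by $\vec v=s\vec a/\n{\vec a}_2^2$, which maps $\varphi^{-1}(I)$ exactly onto $\varphi^{-1}(I+s)$. For $|\vec v|<1/(4M)$ you use the inclusion $(T_{a,i}+\vec v)\setminus T_{a,i}\subseteq\{1/2\le F_{a,i}<1-\eta^c\}\subseteq K_{a,i}\setminus T_{a,i}$. You combine this with $\mu(T_{a,i}\Delta\varphi^{-1}(I))\le\varepsilon'+\delta^{1/2}$ and with $8\varepsilon'\le\mu_{\TT}(I)<1/2$, which follows from \eqref{shr}, \eqref{assum3} and \eqref{quote2}. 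The contradiction $8\varepsilon'\le s\le 2\varepsilon'+O(\delta^{1/2})$ is then immediate; a direct triangle inequality even gives $s\le\varepsilon'+2\delta^{1/2}$.

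\textbf{Comparison.} Your route avoids the surface measure $\mu_{d-1}$, normal thickenings and the measurability issues. It also yields the stronger bound $\n{\vec a}_2\le 4M$, independent of $\eta,\delta,\varepsilon'$. The gain comes from working near the top of the range of $F_{a,i}$, where there is an $O(1)$ gap between $1-\eta^c$ and $1/2$. The paper works around the level $\eta$, where the usable gap is only about $\eta/10$, and this forces the $\eta^{-1}$ loss. The paper's route does give extra geometric information, namely that most of $\partial\varphi^{-1}(I)$ hugs $\{F_{a,i}=\eta\}$. That information is not used downstream, where only some bound depending on $M,\eta,\delta,\varepsilon'$ is needed.

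\textbf{A small inaccuracy.} In your fallback remark you say surjectivity is equivalent to $\gcd(\vec a)=1$. In fact $\varphi$ is surjective for every nonzero $\vec a$ (e.g.\ $x\mapsto 2x$ on $\TT$). The condition $\gcd(\vec a)=1$ is instead equivalent to $\ker\varphi$ being connected. Your main argument never uses this, so it does no harm.
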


In this endeavour, we will first need to prove various preliminary lemmata. Thus, let $\nu_d$ be the Lebesgue measure on $\mathbb{R}^d$. We want to define a measure for sets $\mathcal{S}$ which are supported on finitely many translates of $\varphi^{-1}(0)$. By abuse of notation, we define, for every such Borel set $\mathcal{S}$, the measure
\[ \mu_{d-1}(\mathcal{S})  = \lim_{\delta \to 0} |\delta|^{-1} \mu(\mathcal{S}_{\delta})  \]
where $\mathcal{S}_{\delta}$ is the $\delta$-thickening of $\mathcal{S}$ along the normal direction $\vec{a}$. As $\mu$ is translation invariant in $\TT^d$, $\mu_{d-1}$ is also a translation invariant measure. 

Our proofs below will involve thickening the boundary of $\varphi^{-1}(I)$ and noting that half of this thickening should lie outside of $\varphi^{-1}(I)$. This can be justified by noting the fact that 
\begin{equation} \label{srt6}
  \mu_{\TT}(I) =   \mu(\varphi^{-1}(I)) < 1/2 + O(\varepsilon')
\end{equation}
holds; indeed combine \eqref{assum3} and \eqref{quote2}. Here, we use $\mu_{\TT}$ to denote the push forward of the Lebesgue measure on $\mathbb{R}$ to $\TT$. In fact, $\mu_{\TT}(I) =   \mu(\varphi^{-1}(I))$ holds because $\varphi$ is a surjective continuous group homomorphism and the push forward of the normalised Haar measure on $\TT^d$ is the normalised Haar measure on $\TT$.

Now, let $\mathcal{B}$ be the boundary of $\varphi^{-1}(I)$ and let $\Gamma(\vec{a}) = \mu_{d-1}(\mathcal{B})$.  
We want to prove the following result. 

\begin{lemma}\label{cor: Gamma(a) in T^d}
Under the above set up,    $\Gamma(\vec{a}) = 2\n{\vec{a}}_2$. 
\end{lemma}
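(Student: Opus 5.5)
The plan is to compute $\mu_{d-1}(\mathcal{B})$ directly from the definition: describe $\mathcal{B}$ as the preimage of two points, then identify each normal thickening as the preimage under $\varphi$ of a short interval, whose measure the push-forward property of $\varphi$ gives us.

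\emph{Step 1: $I$ is a nondegenerate proper interval.} By \eqref{shr}, $\mu(S_{a,i})\geq 9\varepsilon'$. Combined with \eqref{quote2}, this gives $\mu_{\TT}(I)=\mu(\varphi^{-1}(I))\geq 8\varepsilon'>0$. By \eqref{srt6}, $\mu_{\TT}(I)<1/2+O(\varepsilon')<1$. Hence $I$ has two distinct endpoints $x_1\neq x_2$ in $\TT$.

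\emph{Step 2: identify the boundary.} Since $\varphi$ is a continuous surjective homomorphism, it is an open map. So the boundary of $\varphi^{-1}(I)$ is $\varphi^{-1}(\partial I)$, that is,
\[ \mathcal{B}=\varphi^{-1}(x_1)\cup\varphi^{-1}(x_2). \]
Each $\varphi^{-1}(x_k)$ is a translate of the closed subgroup $\ker\varphi$. This subgroup is a finite union of parallel codimension-one subtori orthogonal to $\vec{a}$, so $\mathcal{B}$ is indeed supported on finitely many translates of $\varphi^{-1}(0)$ and $\mu_{d-1}$ applies.

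\emph{Step 3: the key computation.} I take the $t$-thickening along the normal direction to mean
\[ \mathcal{S}_t=\{\vec{y}+s\vec{a}/\n{\vec{a}}_2:\vec{y}\in\mathcal{S},\ |s|\leq t/2\}, \]
so that it has total width $t$. (If the other convention, $|s|\leq t$, were intended, only a harmless constant factor would change.) For $\vec{y}\in\varphi^{-1}(x)$ we have
\[ \varphi(\vec{y}+s\vec{a}/\n{\vec{a}}_2)=x+s\,\vec{a}\cdot\vec{a}/\n{\vec{a}}_2=x+s\n{\vec{a}}_2. \]
Hence $\varphi^{-1}(x)_t\subseteq\varphi^{-1}\big([x-t\n{\vec{a}}_2/2,\,x+t\n{\vec{a}}_2/2]\big)$.

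The reverse inclusion is the one point needing care. Given $\vec{z}$ with $\varphi(\vec{z})=x+u$ and $|u|\leq t\n{\vec{a}}_2/2$, the point $\vec{z}-(u/\n{\vec{a}}_2)\vec{a}/\n{\vec{a}}_2$ lies in $\varphi^{-1}(x)$. This shows $\vec{z}\in\varphi^{-1}(x)_t$. Thus the thickening equals this preimage, and by the push-forward property of Haar measure
\[ \mu\big(\varphi^{-1}(x)_t\big)=t\n{\vec{a}}_2 \]
for all $t<1/\n{\vec{a}}_2$.

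\emph{Step 4: combine the two endpoints.} Since $x_1\neq x_2$, for $t$ small the two intervals around $x_1$ and $x_2$ are disjoint, and so are the two thickenings. Therefore $\mu(\mathcal{B}_t)=2t\n{\vec{a}}_2$, and dividing by $t$ and letting $t\to0$ gives $\Gamma(\vec{a})=2\n{\vec{a}}_2$.

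I expect the main obstacle to be purely definitional: pinning down the thickening convention, and checking that the thickening of $\varphi^{-1}(x)$ is exactly a preimage of an interval. The reverse inclusion in Step 3 settles the latter, with no dependence on how many components $\ker\varphi$ has.
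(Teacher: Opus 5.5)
Your proof is correct, but it takes a different route from the paper's.

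\textbf{The paper's route.} It identifies $\mu_{d-1}(\varphi^{-1}(0))$ with the $(d-1)$-dimensional volume of a fundamental domain of the lattice $\ZZ^d\cap H$, where $H=\vec{a}^{\perp}$. It first reduces to $\gcd(a_1,\dots,a_d)=1$; otherwise $\ker\varphi$ splits into $g$ components. It then computes this covolume as $\n{\vec{a}}_2$, comparing with the unit covolume of $\ZZ^d$ and using that the hyperplanes $\vec{a}\cdot\vec{x}=0$ and $\vec{a}\cdot\vec{x}=1$ are $1/\n{\vec{a}}_2$ apart.

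\textbf{Your route.} You work directly from the definition of $\mu_{d-1}$ as a limit of normal thickenings. The identity $\varphi(\vec{y}+s\vec{a}/\n{\vec{a}}_2)=\varphi(\vec{y})+s\n{\vec{a}}_2$ shows that the thickening of a fibre is exactly $\varphi^{-1}$ of an interval of length $t\n{\vec{a}}_2$. The push-forward property of Haar measure, which the paper already uses for \eqref{srt6}, then finishes the computation.

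\textbf{What your route buys.} It is shorter and self-contained: no gcd reduction, no lattice geometry, and no unproved identification of $\mu_{d-1}$ with surface measure on the subtorus. The measure identity also holds exactly for every sufficiently small $t$, not only in the limit. Your Step 1, via \eqref{shr}, \eqref{quote2} and \eqref{srt6}, makes explicit that $I$ has two distinct endpoints. The paper tacitly assumes this when it writes $I=(0,t)$ and splits $\Gamma(\vec{a})$ into two fibres.

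\textbf{What the paper's route buys.} It gives the classical geometric meaning of $\n{\vec{a}}_2$ as the covolume of $\ZZ^d\cap\vec{a}^{\perp}$.

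\textbf{On your convention worry.} Your width-$t$ thickening is the one consistent with the paper. In Lemma~\ref{lem: bound B'}, an $(r/2)$-thickening in both normal directions is assigned measure $r\mu_{d-1}$. So your argument gives the constant $2$ exactly, which matters because the lemma is an exact identity rather than an estimate.
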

\begin{proof}
    Without loss of generality we may assume that $I=(0,t)\subseteq \mathbb T$. 
    Clearly, 
    \[ \Gamma(\vec{a})=\mu_{d-1}(\varphi^{-1}(0)) + \mu_{d-1}(\varphi^{-1}(t))=2\mu_{d-1}(\varphi^{-1}(0)). \]
    We may further assume that $\mathrm{gcd}(a_1,\dots,a_d)=1$. Indeed, if $\mathrm{gcd}(a_1,\dots,a_d)=g>1$, then $\varphi$ factors through the map $\chi_g: \TT\to\TT$ satisfying $\chi_g(\vec{x}) = g\vec{x}$ for all $\vec{x} \in \mathbb{T}^d$, and the inverse of $\chi_g$ results in $g$ connected components. We then apply the same argument to each of the components and the claim still follows. Hence we may assume that $\mathrm{gcd}(a_1,\dots,a_d)=1$, which in particular implies that $\varphi^{-1}(0)$ is connected in $\TT^d$. 

    Now, note that $\Gamma(\vec{a}) = \mu_{d-1}(\varphi^{-1}(0)) + \mu_{d-1}(\varphi^{-1}(t)) = 2\mu_{d-1}(\varphi^{-1}(0))$, and so, it suffices to show that $\mu_{d-1}(\varphi^{-1}(0)) = \n{\vec{a}}_2$. Let $H = \{ \vec{x} \in \RR^d: \vec{a} \cdot \vec{x} =0 \}$. Then $\mu_{d-1}(\varphi^{-1}(0))$ equals the $(d-1)$-dimensional volume $\nu_{d-1}(D)$ of the fundamental domain $D$ of the lattice $\Lambda = \mathbb{Z}^d \cap H$. Since ${\rm gcd}(a_1, \dots, a_d) = 1$, there exists some $\vec{u} \in \mathbb{Z}^d$ such that $L + \mathbb{Z}\vec{u} = \mathbb{Z}^d$. The distance along the normal vector $\vec{a}$ between the hyperplanes $\vec{a} \cdot \vec{x} = 1$ and $\vec{a} \cdot \vec{x} = 0$ is $1/\n{\vec{a}}_2$. Writing $D'$ to be a fundamental domain of the lattice $\mathbb{Z}^d$, the above implies that
    \[ (1/\n{\vec{a}}_2) \nu_{d-1}(D) = \nu_{d}(D') = 1\]
    and so, we get that $\mu_{d-1}(\varphi^{-1}(0)) = \nu_{d-1}(D) = \n{\vec{a}}_2$. This concludes our proof of Lemma \ref{cor: Gamma(a) in T^d}. 
\end{proof}

In order to prove Proposition~\ref{prop: controlling |a|_infty}, 
it remains to show that $\Gamma(\vec{a}) \ll_{\eta, M, \varepsilon'} 1$. We will make use of the following lemma.

\begin{lemma}\label{lem: bound B'}
    Let
    \[ V_{a,i} =   \{ \vec{x} \in \mathbb{T}^d : F(\vec{x} ) = \eta \}, \]
    let $\mathcal{B}$ be the boundary of $\varphi^{-1}(I)$ in $\TT^d$ and let 
    \[
\mathcal{B}' = \{ \vec{b} \in \mathcal{B} : {\rm dist}(\vec{b}, V_{a,i}) \leq 100 \varepsilon'/ \Gamma(\vec{a}) \}.
\]
Suppose $|I|>100 \varepsilon'/ \Gamma(\vec{a})$. Then $\mathcal{B}'$ is $\mu_{d-1}$-measurable, and \[
\mu_{d-1}(\mathcal{B}') \geq \left(1-\frac{1}{25}\right) \mu_{d-1}(\mathcal B). 
\]
\end{lemma}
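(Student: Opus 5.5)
The idea is to show that the part of the boundary lying far from the level set $V_{a,i}$ forces a large symmetric difference $S_{a,i}\Delta\varphi^{-1}(I)$, which \eqref{quote2} forbids. Here I read $V_{a,i}$ as the level set $\{F_{a,i}=\eta\}$. Set $r = 100\varepsilon'/\Gamma(\vec{a})$ and $\vec{n} = \vec{a}/\n{\vec{a}}_2$.

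\emph{Measurability.} Since $F_{a,i}$ is continuous, $V_{a,i}$ is closed. The function $\vec{x}\mapsto {\rm dist}(\vec{x},V_{a,i})$ is continuous, so $\mathcal{B}'$ is the intersection of the closed set $\mathcal{B}$ with a closed sublevel set. Hence $\mathcal{B}'$ is closed, and in particular Borel. As in the proof of Lemma~\ref{cor: Gamma(a) in T^d}, $\mathcal{B}$ is a finite union of translates of cosets of $\varphi^{-1}(0)$, so $\mu_{d-1}$ is defined on its Borel subsets.

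\emph{Key local observation.} Take $\vec{b}\in\mathcal{B}\setminus\mathcal{B}'$ and consider the normal segment $\sigma_{\vec{b}}=\{\vec{b}+s\vec{n}: |s|\le r\}$. This segment lies in the closed ball of radius $r$ about $\vec{b}$, so it does not meet $V_{a,i}$. It is connected and $F_{a,i}$ is continuous, so $F_{a,i}-\eta$ has constant sign on $\sigma_{\vec{b}}$. Therefore $\sigma_{\vec{b}}$ lies either entirely in $S_{a,i}$ or entirely in its complement.

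Along $\sigma_{\vec{b}}$, the value $\varphi$ moves linearly with speed $\n{\vec{a}}_2$, so the segment spans a $\varphi$-interval of length $r\n{\vec{a}}_2$ on each side of $\vec b$; set $\rho=r\n{\vec{a}}_2 = 50\varepsilon'$ (using $\Gamma(\vec a)=2\n{\vec a}_2$). The hypothesis $|I|>r$ gives one side of $\vec b$ a margin inside $\varphi^{-1}(I)$. Bound \eqref{srt6} gives the other side a margin outside it, of size $\gg 1$ in $\varphi$-value. Strictly these margins should be measured in $\varphi$-value, i.e.\ as $\rho$ versus $|I|$; I will assume the hypothesis is meant to give $|I|\ge\rho$, otherwise I would shrink the segment to the available length. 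Granting this, one half of $\sigma_{\vec{b}}$ lies in $\varphi^{-1}(I)$ and the other half does not. Whichever way $S_{a,i}$ falls, one full half-segment of length $r$ lies in $S_{a,i}\Delta\varphi^{-1}(I)$.

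\emph{Integration.} Use the local product structure of $\TT^d$: the map $(\vec{b},s)\mapsto \vec{b}+s\vec{n}$, on $\mathcal{B}\times(-r/2,r/2)$, pushes $\mu_{d-1}\times{\rm Leb}$ forward to $\mu$. This is exactly how $\mu_{d-1}$ was defined, via thickening in the direction $\vec{a}$. I use half-width $r/2$ so that the tubes about the two boundary components of $\varphi^{-1}(I)$ are disjoint, which holds because both $I$ and $\TT\setminus I$ have $\varphi$-length exceeding $\rho$. The map is then injective, and Fubini gives
\[ \varepsilon' \ \ge\ \mu\big(S_{a,i}\Delta\varphi^{-1}(I)\big)\ \ge\ \tfrac{r}{2}\,\mu_{d-1}(\mathcal{B}\setminus\mathcal{B}'). \]
Hence $\mu_{d-1}(\mathcal{B}\setminus\mathcal{B}')\le 2\varepsilon'/r = \Gamma(\vec{a})/50\le \Gamma(\vec{a})/25$. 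Since $\mu_{d-1}(\mathcal{B})=\Gamma(\vec{a})$, this is the claimed bound $\mu_{d-1}(\mathcal{B}')\ge(1-1/25)\mu_{d-1}(\mathcal{B})$.

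\emph{Main obstacle.} The delicate step is the measure-theoretic bookkeeping: justifying the Fubini/coarea identity for $\mu_{d-1}$ defined as a thickening limit, and checking that the normal tubes are disjoint. The latter relies on the two-sided margin, which must be handled in the $\varphi$-value scale $\rho$ rather than the Euclidean scale $r$; the hypothesis on $|I|$ together with \eqref{srt6} is what supplies it.
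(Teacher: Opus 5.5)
Your argument is correct and is essentially the paper's proof. The paper also takes the part $\mathcal{C}=\mathcal{B}\setminus\mathcal{B}'$ of the boundary lying at distance $>r$ from $V_{a,i}$. It notes that an $r$-neighbourhood of such a point lies entirely inside or entirely outside $S_{a,i}$, while half of it lies in $\varphi^{-1}(I)$, and bounds the resulting piece of $S_{a,i}\Delta\varphi^{-1}(I)$ by $\varepsilon'$ via \eqref{quote2}. The paper splits into $\mathcal{C}\setminus S_{a,i}$ and $\mathcal{C}\cap S_{a,i}$, works with ball neighbourhoods, and ends with $\Gamma(\vec{a})/25$; your normal-segment Fubini handles both cases at once, is cleaner, and gives $\Gamma(\vec{a})/50$.

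The scale issue you flag is genuine, and the paper's step ``half of $\mathcal{C}_1+B(r)$ lies in $\varphi^{-1}(I)$'' tacitly needs the same margin. Your fallback of shrinking the segments would not preserve the constant, so drop it. Instead, note that in the structured case Lemma~\ref{lem: lower bound on phi^{-1}(I)} gives $\mu_{\TT}(I)\geq \alpha/2-16\varepsilon'>50\varepsilon'=r\n{\vec{a}}_2$. Together with \eqref{srt6}, this supplies exactly the two-sided margin in $\varphi$-value that your argument uses.
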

\begin{proof}

Observe that $\mathcal{B}'$ is Borel. Indeed, as $F_{a,i}$ is continuous, we get that $S_{a,i}$ is open and $V_{a,i}$ is closed. Moreover $\mathcal{B}$ is a closed manifold because $\varphi$ is continuous. Now since the distance function is continuous, and since $[0,100\varepsilon'/\Gamma(\vec{a})]$ is closed, we get that $\mathcal{B}'$ is closed in $\mathcal{B}$ with respect to the subspace topology, whence, $\mathcal{B}'$ is a Borel set. Let $r=100 \varepsilon'/ \Gamma(\vec{a})$ and let 
\[ \mathcal{C} = \mathcal{B}\setminus \mathcal{B}' = \{ \vec{b} \in \mathcal{B} : {\rm dist}(\vec{b}, V_{a,i}) > r\}.\]
We further denote $\mathcal{C}_1 = \mathcal{C} \setminus S_{a,i}$ and $\mathcal{C}_2 = \mathcal{C} \cap S_{a,i}$. Our aim now is to show that $\mu_{d-1}(\mathcal{C}_1)$ and $\mu_{d-1}(\mathcal{C}_2)$ are small.

Consider the $(r/2)$-thickening $\mathcal{C}_1'$ of $\mathcal{C}_1$ in both the normal directions to $\mathcal{B}$. 
By the quotient integral formula (integrating along normal lines to $\mathcal{B}$), we obtain $\mu(\mathcal{C}_1')= r\mu_{d-1}(\mathcal{C}_1).$ Moreover, we have
\[
\mathcal{C}_1' + B(r/2) \subseteq \mathcal{C}_1 + B(r),
\]
where $B(\tau)$ is a ball of radius $\tau>0$.  From the definition of $\mathcal{C}_1$, we have that $(\mathcal{C}_1+B(r))\cap S_{a,i}=\emptyset$. On the other hand, half of $\mathcal{C}_1 + B(r)$ is contained in $\varphi^{-1}(I)$ because $I$ is a closed interval. Let us call this set $X$. Our discussion implies that $X \subseteq \varphi^{-1}(I) \setminus S_{a,i}$, whence \eqref{quote2} implies that $\mu(X) \leq \varepsilon'$.  Thus, we have 
\[
r \mu_{d-1}(\mathcal{C}_1) \leq \mu(\mathcal{C}_1' + B(r/2))\leq  \mu(\mathcal{C}_1 + B(r)) = 2\mu(X) \leq 2\varepsilon',
\]

We can argue similarly to show that $r\mu_{d-1}(\mathcal{C}_2) \leq 2 \varepsilon'$. The key points of difference to note here are as follows. Writing $\mathcal{C}_2'$ to be the $r/2$-thickening of $\mathcal{C}_2$ in both the normal directions to $\mathcal{B}$, we can show that half of $\mathcal{C}_2 + B(r)$ must lie completely inside $S_{a,i}$ while being outside $\varphi^{-1}(I)$. On the other hand, we know that $\mu(S_{a,i} \Delta \varphi^{-1}(I)) \leq \varepsilon'$. As before, this means that 
\[ r \mu_{d-1}(\mathcal{C}_2) \leq \mu(\mathcal{C}_2' + B(r/2)) \leq \mu(\mathcal{C}_2 + B(r)) \leq 2 \varepsilon' . \]

Combining the two upper bounds, we see that
\[ \mu_{d-1}(\mathcal{C}) \leq  \mu_{d-1}(\mathcal{C}_1) + \mu_{d-1}(\mathcal{C}_2) \leq 4 r^{-1} \varepsilon' = \Gamma(\vec{a})/25.   \]
We conclude our proof by noting that $\mu_{d-1}(\mathcal{B}') \geq \mu_{d-1}(\mathcal{B}) - \mu_{d-1}(\mathcal{C})$.
\end{proof}

Let us now prove the main result of the section.

\begin{proof}[Proof of Proposition~\ref{prop: controlling |a|_infty}]
By Lemma~\ref{cor: Gamma(a) in T^d}, it suffices to show $\Gamma({\vec a})\ll_{\eta,M,\eps'}1$.  Now, thicken $V_{a,i}$ by $\eta/(10^3M)$, and note that due to $F$ being $M$-Lipschitz, these new elements lie in the set
\[ X' = \{ \vec{x} : 11 \eta/10\geq  F(\vec{x}) \geq 9\eta/10 \} \subseteq K_{a,i} \setminus T_{a,i}.\] Since we are in the $2$-dimensional structured case,  we know that \eqref{assum3} holds, which gives us  $\mu(X') \leq \mu(K_{a,i} \setminus T_{a,i}) < \delta^{1/2}$.

Now if $\Gamma(\vec{a}) \leq 10^8 M \varepsilon'/ \eta$, then we would be done, whence assume otherwise. This means that $100 \varepsilon'/\Gamma(\vec{a}) < \eta/(10^6 M)$. Thus, since elements of $\mathcal{B}'$ are at a distance of at most $100\varepsilon'/\Gamma(\vec{a})$ from the set $V_{a,i}$, the $\eta/(10^8 M)$-thickening of $\mathcal{B}'$ must lie in the $\eta/(10^3M)$-thickening of $V_{a,i}$ which itself is a subset of $X'$. This means that
\[  \mu_{d-1}(\mathcal{B}') (\eta/(10^8 M) ) \leq \mu(X') < \delta^{1/2}.\]
By Lemma~\ref{lem: bound B'}, the above inequality tells us that 
\[
\Gamma(\vec{a}) \ll M \delta^{1/2}\eta^{-1}\ll_{\eta,M,\eps'}1,
\]
where in the last inequality we use the definition that $\eta=\delta^{1000}$. Thus we are done.
\end{proof}


\section{$2$-dimensional structured case: Lipschitz approximation of $\varphi^{-1}(I)$ and inhomogeneous Bohr sets} \label{bohrsection}

Let $B$ be the \emph{inhomogeneous Bohr set}
\begin{equation}\label{defb}
    B = \{ n \in I_{a,i} : n \vec{\theta} \in \varphi^{-1}(I) \} = \{ n \in I_{a,i} : n (\vec{a} \cdot \vec{\theta} ) \in I \}. 
\end{equation}
Our main aim in this section is to prove that
\[ |B| \gg \alpha N/(qM) \ \ \text{and} \ \ |A \cap B|/|B| \geq 1 - O(\varepsilon'). \]

We begin by recording a lower bound for $\mu(\varphi^{-1}(I))$. 

\begin{lemma} \label{lem: lower bound on phi^{-1}(I)}
    Let $(a,i)\in J''$. We have $\mu(\varphi^{-1}(I))\geq \alpha/2 - 16 \varepsilon'$. 
\end{lemma}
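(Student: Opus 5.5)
The plan is to combine \eqref{quote2}, which says $\varphi^{-1}(I)$ approximates $S_{a,i}$ up to measure $\varepsilon'$, with a lower bound on $\mu(S_{a,i})$ obtained from the density $\alpha(a,i)$ via \eqref{sai} and Lemma \ref{t1}. The density itself is controlled from below using membership in $J''$ through \eqref{dish}.

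First, since $(a,i) \in J'' \subseteq J$ and $J \cap E = \emptyset$, Lemma \ref{t1} applies and gives
\[ \int_{\TT^d} F_{a,i} \geq \alpha(a,i) - \varepsilon - qM/\cF(M). \]
Feeding this into \eqref{sai} and dropping the nonnegative term $\eta^c \mu(S_{a,i}\setminus T_{a,i})$, we get
\[ \mu(S_{a,i}) \geq \alpha(a,i) - \varepsilon - qM/\cF(M) - \eta. \]
This is exactly the computation already performed in \eqref{shr}.

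Second, by \eqref{dish} we have $\alpha(a,i) \geq (\alpha - \varepsilon - 10\varepsilon')/2 = \alpha/2 - \varepsilon/2 - 5\varepsilon'$. One point needs checking here. In \eqref{dish} the middle inequality uses that the pair $(a,i),(a',i')$ maximises \eqref{2min} for some $(b,j) \in J'$. Every element of $J'$ arises as a difference involving $(a_0,i_0)$ together with some element of $J$. So the maximal sum is at least $\alpha(a_0,i_0) + 10\varepsilon' \geq \alpha(a_0,i_0)$, and then \eqref{tree1} applies. Strictly speaking this also uses $(a_0,i_0) \in J$; I would take $(a_0,i_0)$ to be the maximiser over $J$, as is clearly intended, since the statement writes $\in E$.

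Third, since $\mu(S_{a,i}\Delta\varphi^{-1}(I)) \leq \varepsilon'$ by \eqref{quote2}, we have $\mu(\varphi^{-1}(I)) \geq \mu(S_{a,i}) - \varepsilon'$. Combining the three steps gives
\[ \mu(\varphi^{-1}(I)) \geq \alpha/2 - 6\varepsilon' - \tfrac{3}{2}\varepsilon - qM/\cF(M) - \eta. \]
Finally, I absorb $\varepsilon$, $\eta = \delta^{1000}$ and $qM/\cF(M)$, all far smaller than $\varepsilon'$ by the hierarchy of \S\ref{hp1} and because $\cF$ grows fast, into a further $\varepsilon'$. This yields the bound $\alpha/2 - 16\varepsilon'$ with room to spare.

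There is no real obstacle; the only delicate point is the bookkeeping in the second step. There one must confirm that \eqref{dish} is legitimately available for the specific $(a,i) \in J''$ producing the structured case, which holds by the definition of $J''$ and the $2$-dimensional structured case hypothesis.
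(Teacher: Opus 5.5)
Your proposal is correct and follows the same argument as the paper. You bound $\mu(\varphi^{-1}(I)) \geq \mu(S_{a,i}) - \varepsilon'$ via \eqref{quote2}, bound $\mu(S_{a,i})$ below by $\alpha(a,i)$ up to negligible errors via \eqref{sai} and Lemma \ref{t1} (which is \eqref{shr}), and use $(a,i)\in J''$ to get $\alpha(a,i) \geq \alpha(a_0,i_0)/2$; you cite \eqref{dish} where the paper cites the equivalent \eqref{tree}, and your reading of $(a_0,i_0)$ as the maximiser over $J$ rather than over $E$ is the intended one.
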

\begin{proof}
    By \ref{inne}, we have $\mu(\varphi^{-1}(I))\geq \mu(S_{a,i})-\eps'$. By combining this with \eqref{shr}, we get that 
    \[
    \mu(S_{a,i})\geq \alpha(a,i)-O(\eps-qM/\mathcal{F}(M))-\eta \geq \alpha(a,i) - 10\eps',
    \]
    with the last inequality following from the choice of parameters fixed in \eqref{cot} and the fact that $\cF$ grows sufficiently rapidly. As $(a,i)\in J''$, \eqref{tree} implies that $\alpha(a,i)\geq (\alpha(a,i) + \alpha(a',i'))/2\geq \alpha/2-6\eps'$. Combine everything together gives us the desired conclusion.  
\end{proof}

We will now provide a Lipschitz approximation for $\1_{\varphi^{-1}}(I)$.

\begin{lemma}\label{lem: making f_i Lipschitz}
Let $\sigma >0$ be small enough in terms of $\alpha$. Then there exist Lipschitz functions $f_1, f_2$ which satisfy 
\begin{equation} \label{Lpapprox}
f_1(\vec{x}) \leq \1_{\varphi^{-1}(I)} (\vec{x}) \leq f_2(\vec{x}) \ \text{for all} \ \vec{x} \in \mathbb{T}^d, \ \  \text{and} \ \ \int_{\TT^d}  (f_2(\vec{x}) - f_1(\vec{x}) ) d \vec{x} \leq \sigma, 
\end{equation}
and both $f_1,f_2$ are $O_{M, \eta, \delta,\eps', \sigma}(1)$-Lipschitz.
\end{lemma}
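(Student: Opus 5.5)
The plan is to build $f_1,f_2$ as pullbacks under $\varphi$ of piecewise linear approximations of $\1_I$ on $\TT$, and then use Proposition~\ref{prop: controlling |a|_infty} to bound their Lipschitz constants on $\TT^d$.

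First, the one-dimensional construction. Write $I=[u,u+t]\subseteq\TT$, and fix a parameter $\tau\in(0,\sigma/4]$. We may assume $t<1/2+O(\varepsilon')$ by \eqref{srt6}, and $t\geq \alpha/2-16\varepsilon'$ by Lemma~\ref{lem: lower bound on phi^{-1}(I)}. Hence $t>2\tau$ once $\sigma$ is small in terms of $\alpha$, which is exactly where that hypothesis enters; we also have $t+2\tau<1$.

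Define $g_2:\TT\to[0,1]$ as the trapezoid that equals $1$ on $I$, equals $0$ outside $[u-\tau,u+t+\tau]$, and is linear on the two ramps. Define $g_1$ as the trapezoid that equals $1$ on $[u+\tau,u+t-\tau]$, vanishes outside $I$, and is linear in between. Then $g_1\le \1_I\le g_2$ pointwise, each $g_j$ is $\tau^{-1}$-Lipschitz with respect to $\n{\cdot}_{\TT}$, and
\[
\int_{\TT}(g_2-g_1)\,d\mu_{\TT} = 2\tau\le \sigma/2 .
\]

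Next we pull back, setting $f_j = g_j\circ\varphi$. The pointwise inequalities transfer immediately from $g_1\le \1_I\le g_2$. The integral bound transfers because the push-forward of $\mu$ under the surjective homomorphism $\varphi$ is $\mu_{\TT}$, as already noted after \eqref{srt6}; this gives
\[
\int_{\TT^d}(f_2-f_1)\,d\vec{x}=\int_{\TT}(g_2-g_1)\,d\mu_{\TT}\le\sigma .
\]

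It remains to control the Lipschitz constant. For $\vec{x},\vec{y}\in\TT^d$, choose lifts with $\vec{x}-\vec{y}+\vec{m}$ of minimal Euclidean length. Then
\[
\n{\varphi(\vec{x})-\varphi(\vec{y})}_{\TT}\le |\vec{a}\cdot(\vec{x}-\vec{y}+\vec{m})|\le \n{\vec{a}}_2\n{\vec{x}-\vec{y}},
\]
by Cauchy--Schwarz. Consequently $f_j$ is $\tau^{-1}\n{\vec{a}}_2$-Lipschitz. Proposition~\ref{prop: controlling |a|_infty} gives $\n{\vec{a}}_2\ll_{M,\eta,\varepsilon',\delta}1$, and taking $\tau=\sigma/4$ yields the claimed $O_{M,\eta,\delta,\varepsilon',\sigma}(1)$ bound.

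If the intended notion of Lipschitz is instead on $\mathbb{Z}/q\mathbb{Z}\times[0,1]\times\TT^d$, the functions can simply be extended constantly in the first two coordinates, which does not change the constant.

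The main obstacle was already handled in the previous section: without Proposition~\ref{prop: controlling |a|_infty}, the factor $\n{\vec{a}}_2$ could be unbounded relative to $M$. The only remaining subtlety is the degenerate regime where $I$ is too short or too close to all of $\TT$ for the inner and outer trapezoids to fit. That regime is excluded by the two-sided bounds on $\mu_{\TT}(I)$ above, once $\sigma$ is chosen small in terms of $\alpha$ (and $\varepsilon'$ small in terms of $\alpha$).
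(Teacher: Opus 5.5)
Your proposal is correct and follows the paper's proof. The paper also pulls back trapezoids along $\varphi$; it writes them as $\tau^{-1}\1_{I_j}*\1_{B_1(\tau)}$ with $\tau=\sigma/4$, gets the integral bound from the fact that $\varphi$ pushes Haar measure forward to Haar measure, and bounds the Lipschitz constant by $O(\n{\vec{a}}_2/\sigma)$ via Cauchy--Schwarz and the bound $\n{\vec{a}}_2\ll_{M,\eta,\varepsilon',\delta}1$. Your version is slightly more careful: you explicitly shrink $I$ for the minorant (the paper's $I_1$ is written with the wrong signs), and you use $\mu_{\TT}(I)\ge\alpha/2-16\varepsilon'$ to make the inner trapezoid nondegenerate, which is where the hypothesis on $\sigma$ enters.
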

\begin{proof}
Let $I$ be the projection of the interval $[a,b]$ to $\mathbb{T}$, for some $a,b \in \RR$ satisfying $a < b$ and $a - b < 1/2 + O(\varepsilon')$, with the latter observation following from \eqref{srt6}. Take $\tau=\sigma/4$ and let $B_1(\tau)$ be the projection of $[-\tau/2, \tau/2]$ to $\mathbb{T}$. Define $I_1$ to be the projection of the interval $[a- \tau, b+\tau]$ to $\mathbb{T}$, and $I_2=I+B_1(\tau)$.  Define two functions $g_1,g_2:\TT\to[0,1]$
   \[
   g_1=\frac{1}{\tau}\1_{I_1}*\1_{B_1(\tau)} \quad\text{and }\quad g_2=\frac{1}{\tau}\1_{I_2}*\1_{B_1(\tau)}. 
   \]
and $f_1,f_2:\TT^d\to [0,1]$ by $f_1(\vec{x})=g_1(\varphi(\vec{x}))$ and $f_2(\vec{x})=g_2(\varphi(\vec{x}))$, for every $\vec{x}\in\TT^d$. 

Note that $I_1+B_1(\tau)\subseteq I$, and so, for any $\vec{x} \in \TT^d$ satisfying $\varphi(\vec{x})\notin I$, one has $\varphi(\vec{x})-u\notin I_1$ for every $u \in B_1(\tau)$. Thus, for $\vec{x}$ as above, we get that 
\[
f_1(\vec{x}) = g_1(\varphi(\vec{x}))=\frac{1}{\tau}\int_{B_1(\tau)} \1_{I_1}(\varphi(\vec{x})-u)du =0,
\]
whence $f_1\leq \1_{\varphi^{-1}(I)}$.  Similarly, $I\subseteq I_2 -B_1(\tau)$, and so, whenever $\vec{x} \in \TT^d$ satisfies $\phi(\vec{x})\in I$, then $\phi(\vec{x})-u\in I_2$ for every $u\in B_1(\tau)$. Therefore for any such $\vec{x}$, one has
\[
f_2(\vec{x}) = g_2(\varphi(\vec{x}))=\frac{1}{\tau}\int_{B_1(\tau)} \1_{I_2}(\varphi(\vec{x})-u)du =1,
\]
which gives $f_2\geq \1_{\varphi^{-1}(I)}$. As before, since $\varphi$ is a surjective continuous group homomorphism  and the push forward of the normalised Haar measure on $\TT^d$ is the normalised Haar measure on $\TT$, we get that
\[
\int_{\TT^d} (f_2(\vec{x})-f_1(\vec{x}))d \vec{x} = \int_\TT (g_2(x)-g_1(x)) dx = \mu_\TT(I_2) - \mu_\TT(I_1)\leq 4\tau = \sigma. 
\]

It remains to bound the Lipschitz constants for $f_1$ and $f_2$. Let us first bound the Lipschitz constants for $g_1$ and $g_2$. Note that for any $x,y\in\TT$, one has
\begin{align*}
|g_1(x+y)-g_1(x)|&=\frac{1}{\tau}\Big|\int \1_{I_1}(u)\big(\1_{B_1(\tau)}(x+y-u)-\1_{B_1(\tau)}(x-u)\big)du\Big|\\
&\leq \frac{1}{\tau} \big|\mu_{\TT}\big((B_1(\tau)+y)\triangle B_1(\tau)\big)\big|\leq \frac{2}{\tau}|y|. 
\end{align*}
The same argument works for $g_2$ with $I_1$ replaced by $I_2$, hence both $g_1, g_2$ are $2/\tau$-Lipschitz.  Finally, as $\varphi(\vec{x})=\vec{a}\cdot x$, for every $\vec{x}\in\TT^d$, one has
\[
|\varphi(\vec{x})-\varphi(\vec{y})|\leq \|\vec{a}\|_2\|\vec{x}-\vec{y}\|.
\]
Therefore for any $i \in \{1,2\}$ and for any $\vec{x}, \vec{y} \in \TT^d$, we have
\[
|f_i(\vec{x})-f_i(\vec{y})| = |g_i(\varphi(\vec{x}))-g_i(\varphi(\vec{y}))|\leq\frac{2}{\tau}|\varphi(\vec{x})-\varphi(\vec{y})|\leq \frac{2\|\vec{a}\|_2}{\tau}\|\vec{x}-\vec{y}\|.
\]
This implies that both $f_1$ and $f_2$ are $(8\|\vec{a}\|_2/\sigma)$-Lipschitz. The desired conclusion now follows from  Proposition~\ref{prop: controlling |a|_infty}. 
\end{proof}

We will need one more lemma which implies that functions with a small $U^2$ norm are almost orthogonal to pullbacks of Lipschitz functions.

\begin{lemma}\label{gowers-orthog-struct}
Suppose that $d, M \in \NN$ and that $\delta > 0$. Then for some $\delta_*=\delta_*(d, M,\delta)>0$ and all sufficiently large $N \geq N_0(d,M,\delta)$ the following is true. Let $f(n) = F(n\vec{\theta})$ for all $n \in [N]$, where $F:\TT^d\to[0,1]$ is $M$-Lipschitz and $\vec{\theta} \in \TT^d$, and suppose $g : [N] \to [-1,1]$ satisfies $\|g\|_{U^2} \leq \delta_*$. Moreover, suppose $P$ is an arithmetic progression in $[N]$ with $|P| \geq N/M^2$. Then 
\begin{equation} \label{unf38}
\left| \frac{1}{|P|} \sum_{n \in P} f(n) g(n) \right| \leq \delta.
\end{equation}
\end{lemma}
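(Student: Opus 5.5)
The plan is to approximate $F$ by a trigonometric polynomial and then reduce \eqref{unf38} to bounding Fourier coefficients of $g$ twisted by the indicator of $P$. First, since $F:\TT^d\to[0,1]$ is $M$-Lipschitz, I will use Fej\'er kernel smoothing (or Jackson's theorem on $\TT^d$) to obtain a trigonometric polynomial
\[ F_1(\vec{x}) = \sum_{\vec{m} \in \ZZ^d,\ \n{\vec{m}}_\infty \leq R} c_{\vec{m}} e(\vec{m}\cdot \vec{x}) \]
with $R = R(d,M,\delta)$, $|c_{\vec{m}}| \leq 1$, and $\n{F - F_1}_\infty \leq \delta/2$. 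The number of terms is at most $(2R+1)^d = O_{d,M,\delta}(1)$. Since $|g| \leq 1$, replacing $F$ by $F_1$ changes the average in \eqref{unf38} by at most $\delta/2$. It then suffices to show that for every frequency $\vec{m}$ with $\n{\vec{m}}_\infty\le R$, writing $\beta = \vec{m}\cdot\vec{\theta} \in \TT$,
\[ \Big|\frac{1}{|P|}\sum_{n \in P} g(n) e(\beta n)\Big| \leq \frac{\delta}{2(2R+1)^d}. \]
This bound must hold uniformly in $\beta$, so $\vec{\theta}$ plays no role.

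Next, write $\frac{1}{|P|}\sum_{n\in P} g(n)e(\beta n) = \frac{N}{|P|}\cdot \frac1N\sum_{n\in[N]} g(n)\1_P(n)e(\beta n)$. The prefactor $N/|P|$ is at most $M^2$. The key step is to bound the correlation of $g$ with $\1_P(n)e(\beta n)$ by a power of $\n{g}_{U^2}$. I will embed $[N]$ in $\ZZ/p\ZZ$ with $200N\le p<400N$, as in the discussion around \eqref{equivdefuniform}, and expand $\1_P$ in Fourier space. The Fourier coefficients of an arithmetic progression in $\ZZ/p\ZZ$ have $\ell^1$ norm $O(\log p)$, and this logarithmic loss is fatal. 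I will therefore avoid it by replacing $\1_P$ with a smoothed version $\1_P * \psi$ that has $\ell^1$-bounded Fourier transform. Here $\psi$ is a normalised Fej\'er-type kernel supported on a sub-progression of $P$'s common difference of length $\kappa|P|$, with $\kappa$ small in terms of $\delta, M, R, d$. The error from smoothing is $O(\kappa |P|)$ terms near the endpoints of $P$. Since $|g|\le 1$, this costs $O(\kappa)$ after normalisation. The smoothed indicator has $\sum_r |\widehat{\1_P*\psi}(r)| \ll_\kappa 1$ relative to $|P|/p$.

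After smoothing, the correlation becomes $\sum_r \widehat{\1_P*\psi}(r)\,\hat g(\cdot)$ evaluated at shifted frequencies, and each term is bounded by $\sup_\xi|\hat g(\xi)|$. The standard inequality $\sup_\xi |\hat g(\xi)|^4 \le \sum_\xi|\hat g(\xi)|^4 \ll \n{g}_{U^2}^4\cdot$(normalisation) from \eqref{equivdefuniform} then gives
\[ \Big|\frac1N\sum_n g(n)(\1_P*\psi)(n)e(\beta n)\Big| \ll_{\kappa} \n{g}_{U^2}. \]
Collecting the estimates, the left side of \eqref{unf38} is at most $\delta/2 + (2R+1)^d M^2\big(O(\kappa) + O_\kappa(\delta_*)\big)$. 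I will first choose $\kappa$ so that $(2R+1)^dM^2 O(\kappa) \leq \delta/4$, and then choose $\delta_*$ so that the remaining term is also at most $\delta/4$. The condition $N \geq N_0$ is used to absorb rounding errors, such as $|P|\kappa\ge1$ and the embedding into $\ZZ/p\ZZ$.

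I expect the main obstacle to be the careful handling of $\1_P$. A naive Fourier expansion of the sharp cutoff gives only a $\log N$ loss, which cannot be beaten by a $\delta_*$ that is independent of $N$. The smoothing argument, or equivalently the known fact that $\|g\1_P\|_{U^2}\ll \|g\|_{U^2}^{c}$ for progressions $P$ of density $\gg_M 1$, is the essential point. If convenient, I would instead quote the latter fact directly, for example in the form used in the appendix lemmas of \cite{EGM2014}.
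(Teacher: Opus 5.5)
Your argument is correct, and its skeleton matches the paper's. Both proofs approximate $F$ by a trigonometric polynomial with $O_{d,M,\delta}(1)$ frequencies and bounded coefficients. Both then reduce to bounding $\frac{1}{|P|}\sum_{n\in P}g(n)e(\beta n)$ uniformly in $\beta$, and move to $\ZZ/p\ZZ$ with $200N\le p<400N$.

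You differ in how you avoid the $\log N$ loss from the sharp cutoff $\1_P$.

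\emph{The paper's route.} The paper does not smooth. It sets $f_1=e(\beta\,\cdot)\1_P$, defined through representatives in $[N]$, so arbitrary $\beta\in\TT$ causes no trouble. It then applies Parseval followed by H\"older with exponents $4/3$ and $4$. The geometric-series bound $|\hat f_1(r)|\ll p^{-1}\min\{N',\|q(\beta-r/p)\|_{\TT}^{-1}\}$ gives $\sum_r|\hat f_1(r)|^{4/3}=O(1)$: the $\ell^{4/3}$ norm of a Dirichlet kernel carries no logarithm. Combined with $\sum_r|\hat g(r)|^4\ll\|g\|_{U^2}^4$ from \eqref{equivdefuniform}, this gives the bound directly. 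It is shorter and has no auxiliary parameter.

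\emph{Your route.} Fej\'er smoothing at scale $\kappa|P|$ costs $O(\kappa)$ and yields $\sum_r|\widehat{\1_P*\psi}(r)|=O(\kappa^{-1})$. You therefore only need the weaker input $\sup_r|\hat g(r)|\ll\|g\|_{U^2}$, i.e.\ control of the largest Fourier coefficient rather than the full $\ell^4$ norm. The price is a worse power of $\|g\|_{U^2}$ after optimising $\kappa$. That is harmless, since $\delta_*$ may depend arbitrarily on $(d,M,\delta)$.

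\emph{One detail to tighten.} When $\beta\notin p^{-1}\ZZ$, your ``shifted frequencies'' $\beta+r/p$ are off the grid. The inequality $\sup_\xi|\hat g(\xi)|\le(\sum_\xi|\hat g(\xi)|^4)^{1/4}$, taken over $\xi\in\ZZ/p\ZZ$, does not cover them as written. The cleanest fix is to replace $g$ by $n\mapsto g(n)e(\beta n)$. This has the same $U^2$ norm, because $e(\beta(a-b-c+d))=1$ whenever $a+d=b+c$. After that replacement, every frequency you need lies in $\ZZ/p\ZZ$.
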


\begin{proof}
    Since $F$ is $M$-Lipschitz, we can use \cite[Lemma A.9]{GT2008} to find some $M_0 \ll_{d, M, \delta} 1$ such that
    \[ F(\vec{x}) = \sum_{\substack{ \vec{m} \in \mathbb{Z}^d, \\ \n{\vec{m}}_1 \leq M_0}} c_{\vec{m}} e(\vec{m} \cdot \vec{x}) + O(\delta) \]
    holds uniformly for all $\vec{x} \in \TT^d$, with $|c_{\vec{m}}| \ll_{M,d} 1$ for all $\vec{m} \in \mathbb{Z}^d$ with $\n{\vec{m}}_1 \leq M_0$. Thus, it suffices to show that 
    \[ |N^{-1} \sum_{n \in [N]} (e(\beta n) \1_{P}(n))  g(n) |  \leq \delta \]
holds uniformly for all $\beta \in \TT$. The idea is to consider this sum in $\mathbb{Z}/p\mathbb{Z}$ for some prime $p \in [200N, 400N)$. Thus, writing $f_1(x) = e(\beta x)\1_P(x)$ for all $x\in \mathbb{Z}/p\mathbb{Z}$, one has
\[ p^{-1}\sum_{n \in \mathbb{Z}/p\mathbb{Z}} f_1(x) g(x) =  \sum_{r\in \mathbb{Z}/p\mathbb{Z}} \hat{f_1}(r) \hat{g}(r), \]
where $\hat{h}(r) = p^{-1} \sum_{x\in \mathbb{Z}/p\mathbb{Z}} h(x) e(-xr/p)$ for every $h : \mathbb{Z}/p \mathbb{Z} \to \mathbb{C}$. Applying H\"{o}lder's inequality, we get that
\[ \sum_{r\in \mathbb{Z}/p\mathbb{Z}} \hat{f_1}(r) \hat{g}(r) \leq (\sum_{r \in \mathbb{Z}/p\mathbb{Z}} |\hat{f_1}(r)|^{4/3} )^{3/4} (\sum_{r \in \mathbb{Z}/p\mathbb{Z}} |\hat{g}(r)|^{4} )^{1/4} .\]
Using \eqref{equivdefuniform}, we see that the second term on the right hand side is $O(\n{g}_{U^2}^{1/4})$. Thus, it suffices to show that the first term on the right hand side is $O(1)$. In particular, writing $P = a  + q\cdot [N'] \subseteq [N]$, we see that
\begin{align*}
    \hat{f_1}(r)  = p^{-1} e(a(\beta - r/p)) \sum_{n=1}^{N'} e(qn(\beta - r/p))  ,
\end{align*}
and so,
\[ |\hat{f_1}(r)| \ll p^{-1}\min\left\{ \frac{1}{ \n{q(\beta - r/p)}}_{\TT}, N' \right\} .\]
Thus, we get that
\[ \sum_{r \in \mathbb{Z}/p\mathbb{Z}} |\hat{f_1}(r)|^{4/3} \ll 1 + p^{-4/3} \sum_{r' = 2}^{p/2} \frac{1}{ \n{r'/p}_{\TT}^{4/3}} \ll 1. \]
Putting everything together gives the claimed estimate.
    \end{proof}

We now return to our main aim of showing that the inhomogeneous Bohr set $B$ described in \eqref{defb} is dense in $[N]$ and the set $A$ has density $1 - o(1)$ on $B$. We begin by applying Lemma \ref{lem: making f_i Lipschitz} with 
\begin{equation} \label{sigmadef}
\sigma = \varepsilon.
\end{equation}
Thus, writing $f_1,f_2$ to satisfy \eqref{Lpapprox} and using Lemma~\ref{distribution-integral-a}, we get that
\begin{align} \label{whisp}
    |I_{a, i}| \left( \int_{\TT^d}  f_1(\vec{\gamma})  d\vec{\gamma} - \varepsilon \right)  &  \leq \sum_{n \in I_{a, i}} f_1(n \vec{\theta})  
    \leq \sum_{n \in I_{a, i}} \1_{n \vec{\theta} \in \varphi^{-1}(I)} \nonumber \\
    & \leq \sum_{n \in I_{a, i}} f_2(n \vec{\theta})  \leq |I_{a, i}| \left(\int_{\TT^d}  f_2(\vec{\gamma})   d \vec{\gamma}+ \varepsilon \right) .
\end{align} 
Moreover, we have 
\[ \int_{\TT^d} f_2(\vec{\gamma}) d\vec{\gamma} - \mu(\varphi^{-1}(I)) < \sigma  \ \ \text{and} \ \ \mu(\varphi^{-1}(I)) - \int_{\TT^d} f_1(\vec{\gamma}) d\vec{\gamma} < \sigma. \]
Recalling \eqref{defb}, we can now use \eqref{Lpapprox} and \eqref{whisp} to prove that
\begin{equation} \label{coll}
    |B| = \sum_{n \in I_{a, i}} \1_{n \vec{\theta} \in \varphi^{-1}(I)} = |I_{a, i}| ( \mu(\varphi^{-1}(I)) + O( \varepsilon + \sigma ) )
\end{equation}

Next, we will show the following lemma. 

\begin{lemma} \label{beach}
With the set $B$ defined as in \eqref{defb}, we have
 \[ |A \cap B| = \sum_{n \in I_{a,i}} \1_A( n) \1_{n \vec{\theta} \in \varphi^{-1}(I)} = |I_{a,i}| ( \mu( \varphi^{-1}(I))  + O(\varepsilon' + \sigma + 1/\cF(M) ).\]
 \end{lemma}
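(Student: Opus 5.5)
We will estimate $|A\cap B|$ by sandwiching $\1_{n\vec{\theta}\in\varphi^{-1}(I)}$ between $f_1(n\vec{\theta})$ and $f_2(n\vec{\theta})$, exactly as in \eqref{whisp}. Since $\1_A\ge 0$, this gives
\[ \sum_{n\in I_{a,i}} \1_A(n) f_1(n\vec{\theta}) \le |A\cap B| \le \sum_{n\in I_{a,i}}\1_A(n) f_2(n\vec{\theta}). \]
Consequently, it suffices to prove that for $j\in\{1,2\}$ one has
\[ \sum_{n\in I_{a,i}}\1_A(n)f_j(n\vec{\theta}) = |I_{a,i}|\Big(\int_{\TT^d} F_{a,i}f_j + O(\varepsilon + 1/\cF(M))\Big), \]
and that $\int F_{a,i} f_j = \mu(\varphi^{-1}(I)) + O(\varepsilon'+\sigma)$.

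For the first claim, we will insert the decomposition \eqref{arl1} into the sum. The $f_{\rm sml}$ contribution is at most $\varepsilon^5|I_{a,i}|$ by \eqref{l2sml}, since $(a,i)\in J''\subseteq J$ is disjoint from $E$ and $|f_j|\le 1$. For the $f_{\rm unf}$ contribution, we will apply Lemma \ref{gowers-orthog-struct} with $P=I_{a,i}$ (note $|I_{a,i}|\ge N/M^2$) and with the Lipschitz function $f_j$. The Lipschitz constant $L=O_{M,\eta,\delta,\varepsilon',\sigma}(1)$ comes from Lemma \ref{lem: making f_i Lipschitz}, so the required $\delta_*(d,L,\cdot)$ is beaten by $1/\cF(M)$ once $\cF$ grows fast enough. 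This gives an error $o(1)|I_{a,i}|$, which we can absorb into $1/\cF(M)$ or $\varepsilon$. The structured part is $\sum_{n\in I_{a,i}}F_{a,i}(n\vec{\theta})f_j(n\vec{\theta})$. Here $F_{a,i}f_j$ is Lipschitz with constant $O(M+L)$, so Lemma \ref{distribution-integral-a} together with the $(\cF(M),N)$-irrationality of $\vec{\theta}$ turns this sum into $|I_{a,i}|(\int F_{a,i}f_j+O(\varepsilon))$.

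For the second claim, we note that $\int|f_j-\1_{\varphi^{-1}(I)}|\le\sigma$, so $\int F_{a,i}f_j=\int_{\varphi^{-1}(I)}F_{a,i}+O(\sigma)$. We will then compare $\int_{\varphi^{-1}(I)}F_{a,i}$ with $\mu(\varphi^{-1}(I))$ using the structured-case information \eqref{assum3} and \eqref{quote2}. On $T_{a,i}$ we have $F_{a,i}\ge 1-\eta^c$. On the complement of $S_{a,i}$ we have $F_{a,i}<\eta$, and the set $S_{a,i}\setminus T_{a,i}\subseteq K_{a,i}\setminus T_{a,i}$ has measure $<\delta^{1/2}$. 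Therefore
\[ \int_{\varphi^{-1}(I)}F_{a,i} = \mu(\varphi^{-1}(I)\cap T_{a,i}) + O(\eta^c+\eta+\delta^{1/2}), \]
and $\mu(\varphi^{-1}(I)\cap T_{a,i}) = \mu(\varphi^{-1}(I)) - O(\mu(\varphi^{-1}(I)\setminus S_{a,i})+\delta^{1/2}) = \mu(\varphi^{-1}(I))+O(\varepsilon')$ by \eqref{quote2}. All of these errors are $O(\varepsilon')$ because $\delta$ is small in terms of $\varepsilon'$.

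The main obstacle is the uniform part. Lemma \ref{gowers-orthog-struct} requires the Lipschitz constant of $f_j$ to be controlled before $\cF$ is invoked, and this control is available only because Proposition \ref{prop: controlling |a|_infty} bounds $\n{\vec{a}}_2$ in terms of $M,\eta,\varepsilon',\delta$. We will also need the hierarchy of parameters to be ordered correctly, with $\cF$ allowed to depend on $\sigma=\varepsilon$.
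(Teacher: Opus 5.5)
Your proposal is correct and follows the paper's proof: sandwich $\1_{\varphi^{-1}(I)}$ between $f_1$ and $f_2$, insert the decomposition \eqref{arl1}, bound the $f_{\rm sml}$ term by \eqref{l2sml} and the $f_{\rm unf}$ term by Lemma \ref{gowers-orthog-struct}, then handle the structured term with Lemma \ref{distribution-integral-a} and the comparison $\int F_{a,i}f_j=\mu(\varphi^{-1}(I))+O(\varepsilon'+\sigma)$ via \eqref{quote2} and \eqref{assum3}. One cosmetic point: the $f_{\rm unf}$ error equals the accuracy you feed into Lemma \ref{gowers-orthog-struct} (say $\varepsilon$), not literally $O(1/\cF(M))$, and this does not affect the conclusion.
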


 \begin{proof}
As in the preceding discussion, it suffices to show that 
\begin{equation} \label{bnw}
|\sum_{n \in I_{a,i} } f_1 (n \vec{\theta}) \1_A(n)  - |I_{a,i}| \mu(\varphi^{-1}(I))| \ll  |I_{a,i}|( \varepsilon' + \sigma)  
\end{equation}
and
\[ |\sum_{n \in I_{a,i} } f_2 (n \vec{\theta})\1_A(n) - |I_{a,i}|\mu(\varphi^{-1}(I)) |  \ll  |I_{a,i}|( \varepsilon' + \sigma)  .\]
We will prove the first inequality and the second will follow in a similar manner.

Noting \eqref{arl1} and \eqref{l2sml}, we see that it suffices to show that the inequalities
 \begin{equation} \label{bnw2}
|\sum_{n \in I_{a,i}} F_{a,i}(n \vec{\theta}) f_1(n \vec{\theta}) -  |I_{a,i}| \mu(\varphi^{-1}(I_{a,i}))| \ll |I_{a,i}|( \varepsilon + \eta + \delta^{1/2} + \sigma + \varepsilon') \end{equation}
and
\begin{equation} \label{bnw3}
|\sum_{n \in I_{a,i}} f_{\rm unf}(n) f_1(n \vec{\theta}) | \ll \eps |I_{a,i}|
\end{equation}   
hold. We see that \eqref{bnw3} follows in a straightforward manner from Lemma~\ref{gowers-orthog-struct}, and so, we turn to proving the inequality recorded in \eqref{bnw2}. Combining Lemma~\ref{distribution-integral-a} along with the fact that product of two $O(1)$-bounded functions, each with Lipschitz constants $\ll_{\eta, M, \delta}1$, is also $O(1)$-bounded and has its Lipschitz constant $\ll_{\eta, M , \delta}1$, one finds that
\[  
\Big|\sum_{n \in I_{a,i}} F_{a,i}(n \vec{\theta}) f_1(n \vec{\theta}) -  |I_{a,i}| \int_{} F_{a,i} (\vec{\gamma}) f_1(\vec{\gamma}) d \vec{\gamma} \Big| < \varepsilon|I_{a,i}| . 
\]
Note that
\begin{align*}
    \int  F_{a,i} (\vec{\gamma})f_1(\vec{\gamma})d \vec{\gamma}
    & = \int F_{a,i} (\vec{\gamma})\1_{\varphi^{-1}(I)}(\vec{\gamma})d \vec{\gamma} + \int F_{a,i}(\vec{\gamma}) ( f_1(\vec{\gamma}) - \1_{\varphi^{-1}(I)}(\vec{\gamma}))d \vec{\gamma} \\
    & = \int \1_{\varphi^{-1}(I)}(\vec{\gamma})d \vec{\gamma} + O(\varepsilon' + \eta^c + \delta^{1/2}) + O\left( \int ( \1_{\varphi^{-1}(I)}(\vec{\gamma}) - f_1(\vec{\gamma}))d \vec{\gamma} \right) \\
    & = \mu(\varphi^{-1}(I))+ O(\varepsilon' + \eta + \delta^{1/2}) + O(\sigma) ,
\end{align*}
where in the second step, we have used the facts that 
\[ \mu(S_{a,i} \Delta \varphi^{-1}(I)) \leq \varepsilon',\ \ \  \text{and} \ \  \mu(S_{a,i} \setminus T_{a,i}) \leq \mu(K_{a,i} \setminus T_{a,i}) \leq \delta^{1/2},\]
and $F_{a,i}$ is $1$-bounded, and $1 - \eta^c \leq F_{a,i}(\vec{\gamma}) \leq 1$ for all $\vec{\gamma} \in T_{a,i}$, and $f_1(\vec{\gamma}) \leq \1_{\varphi^{-1}(I)}(\vec{\gamma})$ for all $\vec{\gamma} \in \TT^d$. This combines with the preceding inequality to deliver \eqref{bnw2}.
\end{proof}

Note that combining \eqref{coll} and Lemma \ref{beach} implies that
\[ |A\cap B| = |B| + O( (\varepsilon' + \sigma)|I_{a,i}|).   \]
Putting together Lemma~\ref{lem: lower bound on phi^{-1}(I)} and \eqref{sigmadef} along with the fact that $\varepsilon'$ is sufficiently small in terms of $\alpha$, we discern that
\[ |A \cap B|/|B| \geq 1- O(\varepsilon'). \]
Let $\theta = \vec{a} \cdot \vec{\theta}$. Since $\n{\vec{a}}_2 \ll_{M, \eta, \delta} 1$ and since $\vec{\theta}$ is $(\cF(M),N)$-irrational, we must have that ${\theta}$ is $(c\cF(M),N)$-irrational, where $c>0$ is some positive constant depending on $M, \eta, \delta$.

\section{$2$-dimensional structured case: From Bohr sets to progressions} \label{continuedfracsection}

Recall that a $2$-dimensional progression $P$ is a set of the form
\[ P = \{a_0 + l_1 a_1 + l_2 a_2 : \ l_i \in [L_i] \ \  (1 \leq i \leq 2) \},\]
where $L_1, L_2$ are positive integers and $a_0, a_1, a_2 \in \mathbb{Z}$. Moreover $P$ is proper if $|P| = L_1L_2$.  The main result that we want to prove in this section is as follows.

\begin{lemma} \label{bohrprog}
There exists a proper $d$-dimensional progression $Q \subseteq B$ such that $|Q| \gg |B|$ and $d \leq 2$.
\end{lemma}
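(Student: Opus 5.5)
The plan is to recast $B$ as the set of points of a planar lattice lying in a centred rectangle, and then use successive minima. The rectangle has area $\asymp|B|$, and the irrationality of $\theta$ will show that the lattice is not degenerate at the scale of the rectangle.

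\textbf{Step 1: set-up.} Write $I_{a,i}=\{n_0+qm : m\in[L]\}$ with $L\asymp_{\varepsilon}N$, put $\beta=q\theta$, and let $I=[u,u+t]$. By Lemma~\ref{lem: lower bound on phi^{-1}(I)} and \eqref{srt6} we have $\alpha/2-16\varepsilon'\le t\le 1/2+O(\varepsilon')$. The condition $n\in B$ reads
\[ m\beta\in I-n_0\theta \pmod 1. \]
Consider the lattice $\Lambda=\{(m,\,m\beta-k): m,k\in\mathbb{Z}\}\subset\RR^2$, which has determinant $1$. Then $B$ corresponds bijectively to the points of $\Lambda$ in the translated box $\vec{c}+K$, where $K=[-L/2,L/2]\times[-t/2,t/2]$ and $\vec{c}$ is a suitable centre. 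The map is $(m,y)\mapsto n_0+qm$, which is injective, and $m$ determines $k$ because $t<1$. Note that $\nu_2(K)=Lt$, and by \eqref{coll} this gives $\nu_2(K)\asymp|B|$.

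\textbf{Step 2: both successive minima are at most $1/4$.} Let $\lambda_1\le\lambda_2$ be the successive minima of $\Lambda$ with respect to $K$, attained by a basis $\vec{v}_1,\vec{v}_2$; in the plane one can always choose the minimal vectors to form a basis. Minkowski's second theorem gives $\lambda_1\lambda_2\asymp 1/(Lt)$. Suppose $\lambda_2>1/4$. Then
\[ \lambda_1\ll 1/(Lt\lambda_2)\ll 1/(Lt), \]
so $\vec{v}_1=(s,\,s\beta-r)$ with $0<|s|\le\lambda_1 L\ll 1/t\ll_\alpha 1$ and $\|s\beta\|_{\TT}\le\lambda_1 t\ll 1/L$. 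Since $s\beta=(sq)\theta$ and $|sq|\ll_{\alpha}M$, this contradicts the $(c\cF(M),N)$-irrationality of $\theta$ established at the end of \S\ref{bohrsection}, provided $\cF$ grows fast enough and $L\asymp_{\varepsilon}N$. Hence $\lambda_1\le\lambda_2\le 1/4$.

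\textbf{Step 3: constructing the progression.} The lattice is equally fine in both directions of the box, so the half-box $\vec{c}+\tfrac12K$ contains a lattice point $\vec{p}$; indeed the covering radius is at most $\lambda_1+\lambda_2\le 1/2$ in the $K$-norm. Set
\[ \widetilde{Q}=\{\vec{p}+x\vec{v}_1+y\vec{v}_2 : |x|\le 1/(8\lambda_1),\ |y|\le 1/(8\lambda_2)\}. \]
Each such point lies in $\vec{p}+\tfrac14K+\tfrac14K\subseteq \vec{c}+K$. The set $\widetilde{Q}$ has $\gg 1/(\lambda_1\lambda_2)\gg Lt\gg|B|$ points. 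It is proper, since $\vec{v}_1,\vec{v}_2$ form a basis of $\Lambda$ and the projection of $\Lambda$ onto the first coordinate is injective because $\beta$ is irrational.

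Projecting to the first coordinate $m$ and mapping $m\mapsto n_0+qm$ turns $\widetilde{Q}$ into a proper progression $Q\subseteq B$ of dimension at most $2$ with $|Q|\gg|B|$. If $1/(8\lambda_1)<1$, the $x$-range is trivial and $Q$ is $1$-dimensional. The main technical point is Step 2: converting the degeneracy of the lattice into a rational approximation $(s,r)$ with $s\ll 1$ and $\|s\beta\|_{\TT}\ll 1/N$, while keeping track of the factor $q$. Everything else is standard geometry of numbers.
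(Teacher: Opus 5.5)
Your argument is correct in substance and takes a genuinely different route from the paper, but one justification in Step 3 is wrong and must be replaced. The projection $(m,m\beta-k)\mapsto m$ is never injective on $\Lambda$: its kernel is $\{0\}\times\ZZ$ for every $\beta$. Moreover, you do not know that $\beta\notin\QQ$. As the paper stresses, $(c\cF(M),N)$-irrationality does not give this, which is exactly why Lemma \ref{geomnumber} has a separate rational case.

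Properness follows instead from the height of the box, as in your Step 1. Suppose $(x,y)\neq(x',y')$ give the same $m$. Then $\vec w=(x-x')\vec v_1+(y-y')\vec v_2\in\Lambda$ has first coordinate $0$, so $\vec w=(0,j)$ with $j\in\ZZ$. On the other hand $\|\vec w\|_K\le 2\cdot\tfrac18+2\cdot\tfrac18=\tfrac12$, which forces $|j|\le t/4<1$. Hence $\vec w=0$, contradicting the linear independence of $\vec v_1,\vec v_2$. A smaller slip: the range that can degenerate is the $y$-range, when $1/(8\lambda_2)<1$. The $x$-range cannot, since $1/\lambda_1\gg (Lt)^{1/2}$ is large.

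With that repair, here is how the two routes compare. The paper first normalises to $q=1$, $a=0$ and a symmetric range. It then shrinks $I$ to intervals $I'$ and $I''$ of one hundredth the length. It uses the equidistribution Lemma \ref{distribution-integral-a} to find a single $b$ with $b\theta\in I'$; this is the only place the irrationality of $\theta$ enters. Finally it finds a proper progression of dimension at most $2$ in the homogeneous Bohr set $B''$ via Lemma \ref{geomnumber}. That lemma needs no Diophantine input, but it must treat $\theta\notin\QQ$ by continued fractions (quoting Tao) and $\theta=u/v$ by Minkowski's second theorem.

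You instead work with the inhomogeneous set directly, as the points of the unimodular lattice $\{(m,m\beta-k)\}$ in a translated box. You use irrationality once, to force $\lambda_2\le 1/4$. The covering-radius bound from a basis of minimal vectors then replaces the equidistribution step, and the same basis produces the progression. This gains several things:
\begin{itemize}
\item it is uniform in whether $\beta$ is rational (the paper's rational case is essentially your lattice with $\beta=u/v$, rescaled in the second coordinate);
\item it avoids continued fractions and the factor-$100$ shrinking;
\item it automatically keeps $Q$ inside $I_{a,i}$, because the constraint on $m$ is built into the box.
\end{itemize}
The paper's route, in exchange, isolates a reusable statement about homogeneous Bohr sets that holds with no irrationality hypothesis at all. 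Its Diophantine input is confined to one application of the appendix machinery.
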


A well-studied theme in additive combinatorics is that low-rank homogeneous Bohr sets contain large low-dimensional progressions, see for example \cite[\S4.4]{TV2006}. Most of these results usually rely on some nice argument from geometry of numbers. For our purposes, we want to show that the inhomogeneous Bohr set $B$ described in \cite{Ta2012} contains a large $2$-dimensional progression.  An important piece of information that we will use here is that $\theta = \vec{a}\cdot \vec{\theta}$ is $(c \cF(M),N)$-irrational, where $c>0$ is some positive constant depending on $M, \eta, \delta$. This allows us to find a large translate of a homogeneous Bohr set $B''$ inside our inhomogeneous Bohr set $B$. 

At this point, we observe that if $\theta \in \mathbb{T} \setminus \mathbb{Q}$, then we can use the classical theory of continued fractions to find a large $2$-dimensional progression inside $B''$, see \cite{Ta2012} for a very nice discussion by Tao on this topic. On the other hand, $(c\cF(M), N)$-irrationality of $\theta$ does not actually imply that $\theta \in \mathbb{T} \setminus \mathbb{Q}$, and  the continued fraction strategy does not seem to work in a straightforward manner for the case when $\theta \in \mathbb{Q}$ since the continued fraction of $\theta$ may terminate too quickly. Thus, in this case, we use  classical tools from geometry of numbers to prove our desired claim. This is recorded in the following lemma.

\begin{lemma} \label{geomnumber}
Let $\alpha \in (0,1)$ and let $\sigma \in (0,1/100)$. Let $N$ be sufficiently large in terms of $\sigma$. Then the set 
\begin{equation} \label{alphasigma}
    B_N(\alpha, \sigma) = \{ n \in \mathbb{Z} : |n/N| \leq 1 \ \text{and} \   \n{ n \alpha}_{\mathbb{T}} < \sigma  \} 
\end{equation}
contains a proper $d$-dimensional progression $P$ such that $|P| \gg \sigma N$ and $d \leq 2$.
\end{lemma}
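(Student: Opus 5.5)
Working with the lattice $\Lambda = \{(n, m) \in \mathbb{Z}^2\}$ embedded in $\RR^2$ via $(n,m) \mapsto (n/N, (n\alpha - m)/\sigma)$, I would observe that
\[ B_N(\alpha,\sigma) = \{ n : (n,m) \text{ maps into the box } R = [-1,1]\times(-1,1) \text{ for some } m \in \mathbb{Z}\}. \]
The image lattice $\Gamma$ has covolume $1/(N\sigma)$. Moreover, since $\sigma < 1/100 < 1/2$, the integer $m$ attached to $n$ is unique (it is the nearest integer to $n\alpha$). Hence $n \mapsto$ its lattice point is a bijection, additive on sums that stay in the box, from $B_N(\alpha,\sigma)$ onto $\Gamma \cap R$.

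I would then apply Minkowski's second theorem to the symmetric convex body $R$ and the rank-$2$ lattice $\Gamma$. This gives successive minima $\lambda_1 \le \lambda_2$ with $\lambda_1\lambda_2 \asymp \mathrm{covol}(\Gamma)/\mathrm{vol}(R) \asymp 1/(N\sigma)$. It also gives a basis $v_1, v_2$ of $\Gamma$ (in dimension $2$ the successive-minima vectors can be taken to form a basis) with $v_j \in \lambda_j R$.

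There are two cases. If $\lambda_2 \le 1/4$, I take
\[ P = \{ l_1 v_1 + l_2 v_2 : |l_j| \le L_j \}, \qquad L_j = \lfloor 1/(4\lambda_j) \rfloor. \]
Then $P \subseteq R$, and $P$ is proper because $v_1, v_2$ are linearly independent. Its size is $\asymp (\lambda_1\lambda_2)^{-1} \asymp \sigma N$. Pulling back to $n$-coordinates, i.e. projecting to the first coordinate times $N$, gives a $2$-dimensional progression in $B_N$. It stays proper because the map from lattice points to $n$ is injective: $n$ determines $m$ on $R$, and in fact $(n,m) \mapsto n$ is injective on all of $\mathbb{Z}^2$ restricted to $R$-points.

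If instead $\lambda_2 > 1/4$, then $\lambda_1 \ll 1/(\sigma N \lambda_2) \ll 1/(\sigma N)$, which is small since $N$ is large in terms of $\sigma$. In this case I take the $1$-dimensional progression $\{ l v_1 : |l| \le 1/(2\lambda_1)\}$, of size $\gg 1/\lambda_1$. Using $\lambda_1\lambda_2 \asymp 1/(N\sigma)$ together with the trivial bound $\lambda_2 \ll 1$, this size is $\gg \sigma N$. The bound $\lambda_2 \ll 1$ holds because the lattice point coming from $(n, m) = (0,1)$ has image $(0, -1/\sigma)$, which is large. So I instead need a better argument: by Dirichlet's theorem there is $1 \le n_0 \le 1/\sigma$ with $\|n_0\alpha\| \le \sigma$. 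Then $(n_0, m_0)$ maps into $O(1)\cdot R$ once $N \ge 1/\sigma$, and the vector $(1/N,\,\cdot)$-type points show that $\Gamma$ contains two independent vectors of bounded size. Hence $\lambda_2 \ll 1$, and the case split works with constants adjusted (replace $1/4$ by a suitable absolute constant).

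The main obstacle is this upper bound $\lambda_2 \ll 1$, i.e. exhibiting two independent short lattice vectors. The second one I would take as $(q, p)$ from the Dirichlet/continued-fraction approximation at scale $N$ (so $q \le N$ and $\|q\alpha\| \le 1/N \le \sigma$). It must be independent of the first, or else the $1$-dimensional case already suffices directly. I would also want to double-check that properness and the inclusion survive the projection with only absolute-constant losses.
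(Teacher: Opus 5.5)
Your argument is correct up to the first sentence of the case $\lambda_2 > 1/4$. After that it goes wrong, and the ``main obstacle'' you identify is both unnecessary and false.

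\textbf{The case $\lambda_2 > 1/4$.} Minkowski's bound gives $\lambda_1\lambda_2 \le 4\,\mathrm{covol}(\Gamma)/\mathrm{vol}(R) = 1/(\sigma N)$. Combined with the \emph{lower} bound $\lambda_2 > 1/4$, this already yields $\lambda_1 < 4/(\sigma N)$. Hence $\{l v_1 : |l| \le 1/(2\lambda_1)\} \subseteq \frac{1}{2}R$ has $\gg 1/\lambda_1 \gg \sigma N$ elements. An upper bound on $\lambda_2$ pushes the wrong way: it bounds $1/\lambda_1$ from above, not below.

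\textbf{The claim $\lambda_2 \ll 1$ is false.} Take $\alpha = 1/2$. Every lattice vector independent of the image of $(2,1)$ has $|n/2 - m| \ge 1/2$, so $\lambda_2 \ge 1/(2\sigma)$. This is also why your Dirichlet construction cannot produce two independent short vectors: for this $\alpha$, both approximations are multiples of $(2,1)$. You should simply delete that detour.

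\textbf{Properness.} The concern you raise at the end is already settled by your own injectivity observation. The map $v \mapsto N v^{(1)}$ is a homomorphism, and it is injective on $\Gamma \cap R$ because $2\sigma < 1$. Every point of your progression lies in $\frac{1}{2}R$, so distinct coefficient vectors give distinct lattice points and hence distinct integers. Note also that you only need $v_1, v_2$ to be linearly independent, not a basis; Minkowski's theorem does not supply a basis anyway.

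\textbf{Comparison with the paper.} With that repair, your route differs from the paper's in a useful way.
\begin{enumerate}
\item For irrational $\alpha$, the paper appeals to Tao's continued-fraction analysis.
\item For rational $\alpha = u/v$, it uses the integer lattice $\mathbb{Z}(1,u) + \mathbb{Z}(0,v)$ against the box $|z_1| \le N$, $|z_2| \le \sigma v$. It takes $L_i = \lfloor 1/(10\lambda_i)\rfloor$, splits at $\lambda_2 = 1/10$, and proves properness by a congruence mod $v$ plus a size bound.
\end{enumerate}
That lattice and box are exactly your $\Gamma$ and (closed) $R$ after rescaling the coordinates, and the congruence argument is your nearest-integer injectivity in disguise. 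Your real-lattice formulation $(n,m) \mapsto (n/N, (n\alpha - m)/\sigma)$ makes sense for every real $\alpha$. So it handles both cases at once and removes the dependence on continued fractions, including the paper's worry that the continued fraction of a rational $\alpha$ may terminate too early.
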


\begin{proof}
We first consider the case when $\alpha \in (0,1) \setminus \mathbb{Q}$. In this setting, we use standard results about Bohr sets of the form $B_N(\alpha, \sigma)$ and continued fraction decompositions of $\theta$ to deduce that $B_N(\alpha, \sigma)$ contains a proper $2$-dimensional progression $P'$ such that $|P'| \gg \sigma N$, see this post of Tao \cite[\S2]{Ta2012}. In particular, the results in \cite[\S2]{Ta2012} imply that one can find a set $P' \subseteq B_N(\alpha, \sigma)$ satisfying
 \[ P' = \{ m_1 q_1 + m_2 q_2 \ : \ m_1, m_2 \in \ZZ \ \text{and} \ |m_1| < M_1  \ \text{and}  \ |m_2| < M_2 \}, \]
 where $M_1, M_2, q_1, q_2$ are positive integers such that $q_1, q_2$ are coprime and $M_1 < q_2/100$ and $M_1 M_2 \gg \sigma N$. The first two conditions ensure that the progression $P'$ is proper and the last condition implies that $|P'| \gg \sigma N$.

We may thus assume that $\alpha = u/v$ where $1 \leq u < v$ are coprime integers.    Consider the lattice $\Lambda \subseteq \mathbb{Z}^2$ given by
    \[ \Lambda = \{(n, un + kv) : n, k \in \mathbb{Z}\}  = \mathbb{Z} \cdot (1,u) + \mathbb{Z} \cdot (0,v). \] Note that $\det(\Lambda) = v$. Let 
    \[ K = \{(z_1, z_2) \in \mathbb{R}^2 : |z_1| \leq N \ \text{and} \ |z_2| \leq \sigma v\}\] 
    and let $0 < \lambda_1\leq \lambda_2$ be the successive minima of $K$ with respect to $\Lambda.$ By Minkowski's second theorem, we get that
    \begin{equation} \label{minkowskisecond}
    \lambda_1 \lambda_2 \leq \frac{4 \det(\Lambda)}{(2N)(2\sigma v)}  = \frac{1}{\sigma N}.  
    \end{equation}
    Combining the definition of successive minima, the fact that $K$ is compact along with a standard compactness argument, we can prove that there exist linearly independent vectors $\vec{b}_1, \vec{b}_2 \in \mathbb{R}^2$ such that $\vec{b}_1 \in \Lambda \cap (\lambda_1 \cdot K)$ and $\vec{b}_2 \in \Lambda \cap (\lambda_2 \cdot K)$. For $i \in \{1,2\}$, let $\vec{b}_i = (x_i, y_i)$. Then we have that
    \begin{equation} \label{congruencecondition}
    |x_i| \leq \lambda_i N \ \ \text{and} \ \ |y_i| \leq \lambda_i \sigma v   \ \ \text{and} \ \ ux_i \equiv y_i \ ({\rm mod} \ v)
    \end{equation}
    for every $i \in \{1,2\}$. Now, set
    \[ L_1 = \lfloor 1/(10 \lambda_1) \rfloor \ \ \text{and} \ \  L_2 = \lfloor 1/(10 \lambda_2) \rfloor \ \ \text{and} \ \ P = \{ l_1 x_1 + l_2 x_2 : |l_1| \leq L_1 \ \text{and} \ |l_2| \leq L_2 \}. \]
    Since $\lambda_1 \leq \lambda_2$, inequality \eqref{minkowskisecond} implies that $\lambda_1 \leq 1/(\sigma N)^{1/2}$, whence $1/(10 \lambda_1) >1$. Thus $P$ is non-empty. 
    
    Note that $P \subset [-N,N]$ since for any admissible $l_1, l_2$, we may use \eqref{congruencecondition} to see that 
    \[ |l_1 x_1 + l_2 x_2| \leq  \frac{|x_1|}{10 \lambda_1} + \frac{|x_2|}{10 \lambda_2} \leq \frac{N}{5}.  \] 
    Moreover, for any such $l_1, l_2$, we use \eqref{congruencecondition} to see that
    \[ \left\|\frac{u(l_1x_1 + l_2 x_2)}{v}  \right\|_{\mathbb{T}} =  \left\| \frac{l_1 y_1 + l_2 y_2}{v} \right\|_{\mathbb{T}} \leq \frac{\n{ \frac{y_1}{v} }_{\mathbb{T}}}{10 \lambda_1} + \frac{\n{ \frac{y_2}{v} }_{\mathbb{T}}}{10 \lambda_2} \leq \sigma/5. \]
    Thus $P \subseteq B_N(\alpha, \sigma)$.  

    If $\lambda_2 \geq 1/10$, then \eqref{minkowskisecond} implies that $1/\lambda_1 \geq \sigma N/10$, and so $P$ contains a $1$-dimensional sub-progression of length $\gg 1/\lambda_1 \gg \sigma N/10$. Moreover, by definition, a $1$-dimensional progression is always proper. Thus we are done in this case.
    
    Now, suppose $\lambda_2 < 1/10$, in which case, $P$ is $2$-dimensional. We will show that $P$ is proper by contradiction, and so, suppose
    \[  l_1 x_1 + l_2 x_2 = n_1 x_1 + n_2 x_2 \]
    for some admissible $l_1, l_2, n_1, n_2$ satisfying $l_1 \neq n_1$ and $l_2 \neq n_2$. In this case, 
    \[ (l_1 - n_1) u x_1 + (l_2 - n_2) u x_2 \equiv 0 \ {\rm (mod} \ v) \] 
    and so, \eqref{congruencecondition} implies that
    \[ (l_1 - n_1) y_1 + (l_2 - n_2) y_2 \equiv 0 \ {\rm (mod} \ v).  \]
    Since $|l_i - n_i| \leq  1/(5 \lambda_i)$ and $|y_i| < \lambda_i \sigma v$ and $\sigma < 1/100$, we see that the above congruence holds if and only if $(l_1 - n_1)y_1 + (l_2 - n_2) y_2 = 0$. This means that 
    \[ (l_1 - n_1) \cdot \vec{b}_1 + (l_2 - n_2) \cdot \vec{b}_2 = 0, \]
    wherein, the linear independence of $\vec{b}_1, \vec{b}_2$ forces $l_1 = n_1$ and $l_2 = n_2$, delivering the desired contradiction. Finally, since $P$ is proper and $\lambda_2 < 1/10$, we use \eqref{minkowskisecond} to get that
    \[ |P| \gg \frac{1}{\lambda_1 \lambda_2} \gg \sigma N. \]
    This concludes our proof of Lemma \ref{geomnumber}.
\end{proof}

We are now ready to prove Lemma \ref{bohrprog}.

\begin{proof}[Proof of Lemma \ref{bohrprog}]
Note that $qn + a \in B$ for some $n \in [N/qM]$ if $(qn + a) \theta \in I$, that is, if $ n (q \theta) \in I - a\theta$. Moreover since $q \theta$ is $(c\cF(M)/M, N)$-irrational, it suffices to consider the case when $q=1$ and $a=0$, that is, when $I_{a,i} = [N/M]$. Similarly, if $(n +x)\theta \in I$, then $n \theta \in I-x$, whence, we can replace the interval $[N/M]$ by $[-N',N'] \cap \mathbb{Z}$ where $N' = N/(10M)$.

Now, let $I'$ be an interval centered at the same point as $I$ with $\mu_{\TT}(I')  = \mu_{\TT}(I)/100$ and let $I''$ be an interval centered at $0$ with $\mu_{\TT}(I'') = \mu_{\TT}(I)/100$. Next, let
    \[ B' =  \{ n \in [-N',N'] \cap \mathbb{Z}: n {\theta} \in I'  \}  \ \ \text{and} \ \ B'' =  \{ n \in [-N',N'] \cap \mathbb{Z}: n {\theta} \in I''  \}. \]
 Since $\mu_{\TT}(I) = \mu(\varphi^{-1}(I))$, we may use Lemma~\ref{lem: lower bound on phi^{-1}(I)} to get that $\mu_{\TT}(I) \geq \alpha/4$. Therefore, by replacing $\1_{B'}$ with a suitably Lipschitz function majorised by $\1_{B'}$ if necessary, we may use Lemma~\ref{distribution-integral-a} and the fact that $\theta$ is $(c\cF(M)/M,N)$-irrational to deduce that $B'$ is non-empty.  Next, we use Lemma \ref{geomnumber} to find a proper $d$-dimensional progression $P' \subseteq B''$ such that $d \leq 2$ and 
 \[ |P'| \gg \mu_{\TT}(I'') N' \gg  \mu_{\TT}(I'') N/M \gg \mu(\varphi^{-1}(I)) |I_{a,i}| \gg |B|, \]
 with the last inequality following from \eqref{coll}. Now, given some $b \in B'$, we claim that $b+ P' \subseteq B$. Indeed, for any $p \in P'$, we have
 \[ (b+p)\theta = b\theta + p \theta \in I' + I'' \subseteq I.\]
Setting $Q = b + P'$ concludes our proof. 
\end{proof}

Using \eqref{coll}, we see that $|B|\gg N\alpha/qM \gg_{\varepsilon, \cF} N \gg_{\varepsilon,\varepsilon', \delta } N$, which in turn gives us that
\[ |Q| \gg_{\varepsilon, \varepsilon', \delta} N. \]
Finally, note that since $|Q| \gg  |B|$ and since 
\[ |B \setminus A| = |B| - |B \cap A| \ll \varepsilon' |B|,\]
we may choose $\varepsilon'$ sufficiently small to deduce that 
\[ |Q \cap A| = |Q| - |Q \setminus A| \geq |Q| - |B \setminus A| \geq (1 - O(\varepsilon'))|Q|.  \]
Depending on whether $d$ is $1$ or $2$, the above implies the conclusion mentioned in part \eqref{it2} or \eqref{it3} of Theorem \ref{m1}. This concludes the proof of Theorem \ref{m1}.


\appendix

\section{The abelian arithmetic regularity lemma and related results} \label{arithmeticregularityappendix}

In this appendix we record the $U^2$ version of the arithmetic regularity lemma, together with some auxiliary lemmas that are used in the paper. Recalling the preliminary definitions given at the beginning of \S\ref{reglemmasteup}, the abelian arithmetic regularity lemma can be stated as follows, see \cite[Lemma A.2]{EGM2014} or \cite[Theorem 7]{Eb2015}.

\begin{lemma} \label{abelianarithmetic}
Let $f: [N] \to  [0,1]$ be a function, let $\varepsilon>0$ be a real number, and let $\mathcal F:(0, \infty) \to (0, \infty)$ be a growth function. Then there exists $M \ll_{\varepsilon, \cF} 1$ and a decomposition
\[
f=f_{\mathrm{struct}}+f_{\mathrm{sml}}+f_{\mathrm{unf}}
\]
of $f$ into functions $f_{\mathrm{struct}} : [N] \to [0,1]$ and $f_{\mathrm{sml}}, f_{\mathrm{unf}}: [N] \to [-1,1]$ such that $\n{f_{\mathrm{sml}}}_{\ell^2} < \varepsilon$ and $\n{f_{\mathrm{unf}}}_{U^2} < 1/\cF(M)$ and 
\[ f_{\mathrm{struct}}(n) = F(n/N, n \ {\rm mod} \ q, n \vec{\theta}) \]
for every $n \in [N]$, where $F: [0,1] \times (\mathbb{Z}/q\mathbb{Z}) \times \TT^d$ is an $M$-Lipschitz function, $q,d \leq M$ are positive integers and $\vec{\theta} \in \TT^d$ is $(\cF(M), N)$-irrational.
\end{lemma}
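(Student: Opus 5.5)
This is the abelian arithmetic regularity lemma itself, Lemma~\ref{abelianarithmetic}. The plan is to derive it by an energy-increment iteration, run inside a Fourier-analytic model of $[N]$ and followed by a rectification step. Embed $[N]$ in $\mathbb{Z}/p\mathbb{Z}$ for a prime $200N \le p < 400N$; by \eqref{equivdefuniform}, $U^2$-smallness then amounts to having no large Fourier coefficient.

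First, the basic decomposition. Start with the trivial structured factor and iterate. At each stage we have a finite set of frequencies $\Xi = \{\xi_1,\dots,\xi_k\}$, and we let $f_{\rm str}$ be the conditional expectation of $f$ onto a smoothed level set of the map $n \mapsto (n/N, n\xi_1/p, \dots, n\xi_k/p)$. Concretely, we convolve $f$ with a Lipschitz partition of unity at scale $\rho$ on $[0,1]\times \TT^k$.
\begin{itemize}
\item If $f - f_{\rm str}$ has $U^2$ norm at most $1/\cF(M)$, where $M$ is the current complexity, we stop.
\item Otherwise $f - f_{\rm str}$ has a Fourier coefficient of size $\gg \cF(M)^{-2}$. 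We add that frequency to $\Xi$, and the $\ell^2$ energy $\n{f_{\rm str}}_{\ell^2}^2 \le 1$ increases by $\gg \cF(M)^{-4}$.
\end{itemize}
Since the energy is bounded by $1$, the iteration terminates after boundedly many steps. As usual, the argument must be run with a two-scale (Tao-style) energy increment. That is, we look for $M \le M'$ with $\n{f_{\rm str}^{(M')}}_2^2 - \n{f_{\rm str}^{(M)}}_2^2 \le \varepsilon^2$, which is guaranteed by pigeonhole among $O(\varepsilon^{-2})$ iterates of $\cF$. We then set $f_{\rm sml} = f_{\rm str}^{(M')} - f_{\rm str}^{(M)}$, which has $\ell^2$ norm below $\varepsilon$, and $f_{\rm unf} = f - f_{\rm str}^{(M')}$. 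This yields $M \ll_{\varepsilon,\cF} 1$, and it keeps all three pieces bounded, in $[0,1]$ and $[-1,1]$.

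Second, rectification of the frequencies into the form $(n \bmod q, n\vec{\theta})$ with $\vec{\theta}$ irrational. The frequency vector $\vec{\beta} = (\xi_j/p)_j \in \TT^k$ need not be $(\cF(M),N)$-irrational. Suppose some $\vec{m}$ with $\n{\vec m}_1 \le \cF(M)$ satisfies $\n{\vec m\cdot\vec\beta}_\TT < \cF(M)/N$. Then $\vec m\cdot\vec\beta = r/s + O(\cF(M)/N)$ with $s \le \cF(M)$. We handle such a relation in two ways:
\begin{itemize}
\item the part $r/s$ is absorbed into the residue $n \bmod q$, with $q$ multiplied by $s$;
\item the part $O(\cF(M)/N)$ is absorbed into the $n/N$ coordinate.
\end{itemize}
We then change basis in $\mathbb{Z}^k$ (a unimodular completion of $\vec m$) to eliminate one coordinate. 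This lowers the dimension $k$, so after at most $k$ steps the remaining vector $\vec\theta$ is irrational. The loss is that the new function $F$ has Lipschitz constant worse by a factor bounded in $\cF(M)$. We rename this bound $M$, which forces the induction to be done with a modified growth function $\tilde\cF \ge \cF$ chosen in advance.

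The main obstacle is this interplay: rectification inflates $M$, which weakens the irrationality threshold, and that must be fed back into the choice of growth function in the energy-increment step. I would resolve it as Green--Tao do, by running the whole argument with $\tilde\cF(x) = \cF(C(x))$, where $C(x)$ bounds the complexity after rectification. The final step is to check that $f_{\rm str}$ still takes values in $[0,1]$, which holds because it is an average of $f$ against a nonnegative kernel.
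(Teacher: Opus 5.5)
The paper gives no proof of this lemma. It imports it from \cite[Lemma A.2]{EGM2014} and \cite[Theorem 7]{Eb2015}, i.e.\ the $s=1$ case of the Green--Tao arithmetic regularity lemma.

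Your architecture is the standard one: a Fourier energy increment in $\ZZ/p\ZZ$, a two-scale pigeonhole separating $f_{\rm sml}$ from $f_{\rm unf}$, and rectification of near-rational relations into $n \bmod q$ and $n/N$, with the growth function boosted in advance. The rectification step is fine, provided you apply it to the primitive vector $\vec m/\gcd(\vec m)$. That is where the denominator $s\le\cF(M)$ comes from, since $\vec m\cdot\vec\beta$ itself is close to an integer.

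\textbf{The gap is in the increment step.} Your $f_{\rm str}=Tf$ is an average of $f$ against a Lipschitz partition of unity. Such a $T$ is a self-adjoint Markov contraction, but it is not an orthogonal projection, and the operators at successive stages do not nest. Indeed $\langle T'f-Tf,Tf\rangle=\langle f,(T'T-T^2)f\rangle$, and $T'T\neq T^2$ in general: for instance, when a new frequency is a multiple of an old one, the finer smoothing changes $Tf$. Consequently two things you rely on are unavailable:
\begin{enumerate}
\item the claimed increase of $\|f_{\rm str}\|_{\ell^2}^2$ by $\gg\cF(M)^{-4}$;
\item the Pythagorean identity $\|f^{(M')}_{\rm str}-f^{(M)}_{\rm str}\|_{\ell^2}^2=\|f^{(M')}_{\rm str}\|_{\ell^2}^2-\|f^{(M)}_{\rm str}\|_{\ell^2}^2$.
\end{enumerate}
The second is exactly what you use to conclude $\|f_{\rm sml}\|_{\ell^2}<\varepsilon$ from a small energy gap.

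Either of two repairs closes this.

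(a) Run the increment with genuine conditional expectations $\mathbb{E}(f\mid\mathcal{B})$ onto the $\sigma$-algebras generated by grids of boxes in $[0,1]\times\TT^k$. Refine the grids as frequencies are added, choosing the offset of each new grid at random; Pythagoras then holds. Only at the end replace $\mathbb{E}(f\mid\mathcal{B}_M)$ by a $[0,1]$-valued Lipschitz function $\tilde f^{(M)}_{\rm str}$ that agrees with it outside a $\tau$-neighbourhood of the box boundaries. The random offsets make the proportion of $n\in[N]$ with $(n/N,n\vec\beta)$ in that neighbourhood small. The resulting discrepancy is absorbed into $f_{\rm sml}=f^{(M')}_{\rm str}-\tilde f^{(M)}_{\rm str}$, which still takes values in $[-1,1]$.

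(b) Keep your smoothing, but use the energy $\langle Tf,f\rangle$ and exactly nested product partitions. Keep the old bumps $\psi_j$ and introduce a partition $\{\chi_l\}$ at scale $\rho$ only in the new coordinate, so that $\psi_j=\sum_l\psi_j\chi_l$. With cell masses $w_{jl}$, cell averages $c_{jl}$ and their weighted means $\bar c_j$, one computes $\langle T'f,f\rangle-\langle Tf,f\rangle=\sum_{j,l}w_{jl}|c_{jl}-\bar c_j|^2$. Jensen's inequality shows this dominates $\|T'f-Tf\|_{\ell^2}^2$. Cauchy--Schwarz shows it is at least $(\kappa-O(\rho))^2$ whenever $|\langle f-Tf,e(\cdot\,\xi/p)\rangle|\geq\kappa$.

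With either repair, the rest of your plan goes through.
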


We refer the reader to \cite[Appendix A]{EGM2014} and \cite{Eb2015} for a nice introduction to the abelian arithmetic regularity Lemma. Eberhard--Green--Manners \cite{EGM2014} combined this with a nice discretisation of $[0,1]$ and some further analytic manoeuvres to obtain Lemma \ref{arithlem}. 

In our proofs, we will require the following properties of Lipschitz functions and functions with small $U^2$ norm. The first of these is \cite[Lemma A.3]{EGM2014}.

\begin{lemma} \label{distribution-integral-a}
Let $\delta, \eta>0$,  let $M>0$, let $d \in \mathbb{N}$ and let $A>0$ be sufficiently large in terms of $M, d, \eta, \delta$. Let $\vec{\theta} \in \TT^d$ be $(A, N)$-irrational, let $F : \TT^d \rightarrow \mathbb{C}$ be an $M$-Lipschitz function and let $P \subset [N]$ be an arithmetic progression of length at least $\eta N$. Then
\[ 
\left|\frac{1}{|P|}\sum_{n \in P} F(n\vec{\theta}) - \int_{\TT^d} F(\vec{\gamma})  d\vec{\gamma} \right| \leq \delta.
\]
\end{lemma}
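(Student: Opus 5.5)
This is the standard equidistribution statement for Kronecker sequences along a progression, and I would prove it with a Weyl-type argument: approximate $F$ by a trigonometric polynomial, then use $(A,N)$-irrationality to show each nontrivial character averages to almost zero along $P$.

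\emph{Step 1: approximation.} Since $F$ is $M$-Lipschitz on $\TT^d$, I will invoke \cite[Lemma A.9]{GT2008}, exactly as in the proof of Lemma~\ref{gowers-orthog-struct}. This gives some $M_0 \ll_{d,M,\delta} 1$ and coefficients $c_{\vec{m}}$ with $|c_{\vec{m}}| \ll_{M,d} 1$ such that
\[ F(\vec{x}) = \sum_{\n{\vec{m}}_1 \leq M_0} c_{\vec{m}} e(\vec{m}\cdot\vec{x}) + O(\delta/3) \]
uniformly in $\vec{x}$. The $O(\delta/3)$ error costs at most $2\delta/3$ in total: once in the average over $P$ and once in the integral. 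The constant term $c_{\vec{0}}$ equals $\int F$ up to $O(\delta/3)$, because every nonzero frequency integrates to zero. Hence it suffices to show that for each nonzero $\vec{m}$ with $\n{\vec{m}}_1 \leq M_0$,
\[ \Big|\frac{1}{|P|}\sum_{n\in P} e(n\,\vec{m}\cdot\vec{\theta})\Big| \leq \frac{\delta}{3\,\#\{\vec{m}\}\max|c_{\vec{m}}|}. \]
The right-hand side is a quantity $\kappa$ that depends only on $M,d,\delta$.

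\emph{Step 2: geometric sums.} Write $P = b + r\cdot\{1,\dots,L\}$ with $L = |P| \geq \eta N$ and $1 \leq r \leq N/L \leq 1/\eta$. The sum is then a geometric series in $e(r\beta)$, where $\beta = \vec{m}\cdot\vec{\theta}$, so it is bounded in modulus by $\min\{L, 1/(2\n{r\beta}_{\TT})\}$. Now $r\vec{m}$ is a nonzero integer vector with $\n{r\vec{m}}_1 \leq M_0/\eta$. Take $A \geq M_0/\eta$ and also $A \geq 1/(2\eta\kappa)$; both are admissible since $A$ may be chosen large in terms of $M,d,\eta,\delta$. Then $(A,N)$-irrationality gives $\n{r\beta}_{\TT} \geq A/N$. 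Consequently the normalised sum is at most $N/(2AL) \leq 1/(2A\eta) \leq \kappa$, which is what Step 1 requires.

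\emph{Step 3: assembly.} Summing over the boundedly many frequencies gives a total discrepancy of at most $\delta$. The only delicate point is the bookkeeping in Step 2: the common difference $r$ of $P$ inflates the frequency, and this is controlled because $r \leq 1/\eta$ follows from $|P| \geq \eta N$ and $P \subset [N]$. That is why $A$ must depend on $\eta$ as well as on $M,d,\delta$. If $F$ is complex-valued, I would apply the same argument separately to its real and imaginary parts, each of which is $M$-Lipschitz.
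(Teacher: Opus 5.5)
Your proof is correct. The paper gives no proof of this lemma; it simply cites \cite[Lemma A.3]{EGM2014}. Your argument is the standard one behind that citation: approximate $F$ by a trigonometric polynomial using \cite[Lemma A.9]{GT2008} (the same tool the paper uses in Lemma~\ref{gowers-orthog-struct}), then bound each nontrivial character by a geometric series and $(A,N)$-irrationality.

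Two small repairs are needed.

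First, $P\subset[N]$ gives only $r(L-1)\le N-1$, not $r\le N/L$; for example $P=\{1,N\}$ has $r=N-1$. Use $r\le 2N/L\le 2/\eta$ instead, which holds once $L\ge 2$. That condition is automatic: applying irrationality to $\vec{m}=(1,0,\dots,0)$ gives $A/N\le 1/2$, so $L\ge \eta N\ge 2\eta A\ge 2$ once $A\ge 1/\eta$. Then take $A\ge 2M_0/\eta$ rather than $M_0/\eta$.

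Second, in this paper ``$M$-Lipschitz'' carries no sup-norm bound, so $c_{\vec{0}}$ may be arbitrarily large. Take your $\max|c_{\vec m}|$ over $\vec m\ne\vec 0$ only, or first normalise $F(\vec 0)=0$, which changes neither side of the claimed inequality. With that change $\kappa$ genuinely depends only on $M,d,\delta$.
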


The following is \cite[Lemma A.8]{EGM2014}.

\begin{lemma} \label{gowers-progressions}
Suppose that $f : [N]\rightarrow \mathbb{C}$ is a function, and $P \subset [N]$ is a progression of length at least $\eta N$. Then 
\[
\left|\frac{1}{|P|}\sum_{n \in P} f(n)\right| \ll \eta^{-1}\| f \|_{U^2}.
\]
\end{lemma}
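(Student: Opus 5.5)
The plan is to transfer to a cyclic group and use Fourier analysis, exactly as in the proof of Lemma~\ref{gowers-orthog-struct}. Fix a prime $p$ with $200N \leq p < 400N$ and view $f$ and $\1_P$ as functions on $\mathbb{Z}/p\mathbb{Z}$ supported on $[N] \ ({\rm mod}\ p)$. Since $p > 2N$, no wrap-around occurs, so additive quadruples in $[N]$ correspond exactly to additive quadruples in $\mathbb{Z}/p\mathbb{Z}$. With the normalisation $\hat{h}(r) = p^{-1}\sum_x h(x) e(-xr/p)$, Parseval gives
\[ p^{-1}\sum_{n \in P} f(n) = \sum_{r \in \mathbb{Z}/p\mathbb{Z}} \hat{f}(r)\overline{\hat{\1_P}(r)}. \]

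\textbf{Step 1: the $U^2$ norm controls the $\ell^4$ norm of $\hat f$.} We have $\sum_r |\hat f(r)|^4 = p^{-3}\sum_{a+d=b+c} f(a)\overline{f(b)f(c)}f(d)$. Since $E(N) \asymp N^3 \asymp p^3$, this gives $\sum_r|\hat f(r)|^4 \asymp \n{f}_{U^2}^4$, which is the content of \eqref{equivdefuniform}.

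\textbf{Step 2: H\"older's inequality.} Applying H\"older with exponents $4$ and $4/3$,
\[ \Big|p^{-1}\sum_{n\in P} f(n)\Big| \leq \Big(\sum_r |\hat f(r)|^4\Big)^{1/4}\Big(\sum_r |\hat{\1_P}(r)|^{4/3}\Big)^{3/4} \ll \n{f}_{U^2}\Big(\sum_r |\hat{\1_P}(r)|^{4/3}\Big)^{3/4}. \]

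\textbf{Step 3: the $\ell^{4/3}$ norm of $\hat{\1_P}$.} This is the main computation. Write $P = a_0 + q\cdot\{0,\dots,L-1\}$, where $L = |P|$ and $1 \leq q < p$. Summing the geometric series gives
\[ |\hat{\1_P}(r)| \ll p^{-1}\min\{L, \n{qr/p}_{\TT}^{-1}\}. \]
Since $p$ is prime and $q \not\equiv 0 \ ({\rm mod}\ p)$, the map $r \mapsto qr$ permutes $\mathbb{Z}/p\mathbb{Z}$. Hence
\[ \sum_r |\hat{\1_P}(r)|^{4/3} \ll p^{-4/3}\sum_{|s| \leq p/2}\min\{L, p/|s|\}^{4/3}. \]
Splitting the sum at $|s| \approx p/L$, the terms with $|s| \lesssim p/L$ contribute $\ll (p/L)L^{4/3}$, and the tail contributes $\ll p^{4/3}(p/L)^{-1/3}$. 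Both equal $pL^{1/3}$, so the sum is $\ll p^{-1/3}L^{1/3}$, and its $3/4$ power is $\ll (L/p)^{1/4}$.

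\textbf{Step 4: conclusion.} Combining Steps 2 and 3,
\[ \Big|\frac{1}{|P|}\sum_{n\in P} f(n)\Big| \ll \frac{p}{L}\Big(\frac{L}{p}\Big)^{1/4}\n{f}_{U^2} = \Big(\frac{p}{L}\Big)^{3/4}\n{f}_{U^2} \ll \eta^{-3/4}\n{f}_{U^2} \leq \eta^{-1}\n{f}_{U^2}. \]
Here we used $L \geq \eta N \gg \eta p$ and $\eta \leq 1$.

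The only delicate point is Step 3. The coarser route of bounding by $\sup_r|\hat f(r)|$ times $\sum_r|\hat{\1_P}(r)|$ loses a factor $\log(p/L)$, which is why we use the $\ell^4$–$\ell^{4/3}$ H\"older pairing. We must also check that wrap-around is harmless: $P \subseteq [N]$ and $p > 2N$, so $P$ is a genuine progression modulo $p$ and the additive quadruples match those in $[N]$.
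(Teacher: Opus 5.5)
Your proof is correct, and it gives the stronger bound $\eta^{-3/4}$.

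The paper does not prove this lemma; it quotes Lemma A.8 of Eberhard--Green--Manners. Your argument is the same Fourier computation the paper runs in the proof of Lemma~\ref{gowers-orthog-struct}, specialised to $\beta=0$: transfer to $\mathbb{Z}/p\mathbb{Z}$ with $200N\le p<400N$, then the $\ell^4$--$\ell^{4/3}$ H\"older pairing.

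The one real difference is the $\ell^{4/3}$ estimate for $\widehat{\1_P}$:
\begin{itemize}
\item The paper bounds $\sum_r|\widehat{\1_P}(r)|^{4/3}\ll 1$ crudely. This gives $|p^{-1}\sum_{n\in P}f(n)|\ll\n{f}_{U^2}$. Dividing by $|P|/p\gg\eta$ yields exactly the stated $\eta^{-1}$.
\item Your split at $|s|\approx p/|P|$ gives the sharper bound $(|P|/p)^{1/3}$, hence $\eta^{-3/4}$. This exponent is optimal: for $f=\1_P$ with $P$ an interval of length $\eta N$, the left side is $1$ and $\n{f}_{U^2}\asymp\eta^{3/4}$.
\end{itemize}

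One correction to your closing aside. The route via $\sup_r|\hat f(r)|\cdot\sum_r|\widehat{\1_P}(r)|$ loses a factor $\log|P|$, not $\log(p/|P|)$. The quantity $\sum_r|\widehat{\1_P}(r)|$ is essentially the $L^1$ norm of a Dirichlet kernel of length $|P|$. Since this loss grows with $N$, that route fails outright rather than merely worsening the constant. This does not affect your proof.
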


Finally, we will also use \cite[Lemma 4.5]{EGM2014} and this is recorded below.

\begin{lemma} \label{proportionlem}
    Let $\eta>0$, let $\cF$ be a growth function growing sufficiently rapidly in terms of $\eta$. Then for any $M$-Lipschitz function $F: \mathbb{T}^d \to [0,1]$, any $(\cF(M), N)-$irrational $\vec{\theta} \in \mathbb{T}^d$, any arithmetic progression $P \subseteq [N]$ with $|P| \geq N/M^2$, one has
    \[ |P|^{-1} |\{n \in P : F(n\vec{\theta}) > \eta \}| \geq \mu(\vec{x} \in \mathbb{T}^d : F(\vec{x}) > 2\eta\} ) - \eta. \]
\end{lemma}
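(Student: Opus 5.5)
The idea is to replace the indicator of $\{F > \eta\}$ by a Lipschitz minorant that is still $1$ on $\{F \geq 2\eta\}$, and then apply the equidistribution statement Lemma \ref{distribution-integral-a}.

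\textbf{Step 1 (the minorant).} Define $G : \TT^d \to [0,1]$ by
\[ G(\vec{x}) = \min\{1, \max\{0, (F(\vec{x}) - \eta)/\eta\}\}. \]
The map $t \mapsto \min\{1,\max\{0,(t-\eta)/\eta\}\}$ is $\eta^{-1}$-Lipschitz and $F$ is $M$-Lipschitz, so $G$ is $(M/\eta)$-Lipschitz. By construction:
\begin{itemize}
\item $G(\vec{x}) = 0$ whenever $F(\vec{x}) \leq \eta$, hence $G \leq \1_{\{F > \eta\}}$ pointwise;
\item $G(\vec{x}) = 1$ whenever $F(\vec{x}) \geq 2\eta$, hence $\int_{\TT^d} G \geq \mu(\{\vec{x} \in \TT^d : F(\vec{x}) > 2\eta\})$.
\end{itemize}

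\textbf{Step 2 (lower bound for the count).} Summing the pointwise bound over $P$ gives
\[ |\{n \in P : F(n\vec{\theta}) > \eta\}| \geq \sum_{n \in P} G(n\vec{\theta}). \]

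\textbf{Step 3 (equidistribution).} Apply Lemma \ref{distribution-integral-a} to $G$ with the following parameter choices:
\begin{itemize}
\item Lipschitz constant $M/\eta$;
\item length parameter $1/M^2$, which is allowed since $|P| \geq N/M^2$;
\item error $\eta$;
\item irrationality parameter $A = \cF(M)$.
\end{itemize}
The lemma then yields
\[ \sum_{n \in P} G(n\vec{\theta}) \geq |P|\Big(\int_{\TT^d} G - \eta\Big). \]
Combining this with Steps 1 and 2 and dividing by $|P|$ gives the claimed inequality.

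\textbf{Main obstacle: the hypothesis of Lemma \ref{distribution-integral-a}.} That lemma requires $A$ to be sufficiently large in terms of the Lipschitz constant, the length parameter, the error and the dimension. Here the Lipschitz constant is $M/\eta$ and the length parameter is $1/M^2$, both of which depend only on $M$ and $\eta$. The dimension $d$ must also be controlled. In every application of this lemma in the paper we have $d \leq M$ (from Lemma \ref{arithlem}), so all the relevant parameters are bounded in terms of $M$ and $\eta$. It therefore suffices to let $\cF$ grow fast enough in terms of $\eta$ that $\cF(M)$ exceeds the threshold required by Lemma \ref{distribution-integral-a} for those parameters; this is exactly what the hypothesis on $\cF$ provides. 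If one prefers not to rely on $d \leq M$, one instead lets $\cF$ depend on $d$ as well, with the same effect.
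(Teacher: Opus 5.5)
Your argument is correct and is essentially the standard proof. The paper does not prove this statement itself but imports it from Eberhard--Green--Manners \cite[Lemma 4.5]{EGM2014}: compose $F$ with a piecewise-linear cutoff to obtain an $(M/\eta)$-Lipschitz minorant of $\1_{\{F>\eta\}}$ that equals $1$ on $\{F\geq 2\eta\}$, then apply Lemma \ref{distribution-integral-a} with length parameter $1/M^2$ and error $\eta$.

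Your caveat about the dimension is well taken. The threshold in Lemma \ref{distribution-integral-a} depends on $d$, so you need either $d\le M$ or a growth function $\cF$ that depends on $d$. The first holds in the regularity setup of Lemma \ref{arithlem}, where every application occurs, but the statement as written leaves this implicit.
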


\bibliographystyle{amsbracket}
\providecommand{\bysame}{\leavevmode\hbox to3em{\hrulefill}\thinspace}

\end{document}